\newtheorem{theorem}{Theorem}[section]
\newtheorem{proposition}[theorem]{Proposition}
\newtheorem{lemma}[theorem]{Lemma}
\newtheorem{corollary}[theorem]{Corollary}
\theoremstyle{definition}
\newtheorem{example}[theorem]{Example}
\newtheorem{definition}[theorem]{Definition}
\theoremstyle{remark}
\theoremstyle{remark}
\newtheorem{remark}[theorem]{Remark}
\def\({{\rm (}}
\def\){{\rm )}}
\let\Mathrm\operator@font
\let\Cal\mathcal
\let\Bbb\mathbb
\let\bs\boldsymbol
\def\standop#1{\mathop{\Mathrm #1}\nolimits}
\def\difstop#1#2{\expandafter\def\csname #1\endcsname{\standop{#2}}}
\def\defstop#1{\difstop{#1}{#1}}
\def\id{\mathord{\Mathrm{id}}}
\def\Id{\mathord{\Mathrm{Id}}}
\def\mon{^\mathrm{mon}}
\def\op{^{\standop{op}}}
\def\Sch{\underline{\Mathrm Sch}}
\def\cc{\mathop{\check H}\nolimits}
\def\ccs{\mathop{\check{\underline H}}\nolimits}
\def\km{\kappa\text{-}\Mor}
\def\C{\Cal C}
\def\F{\Cal F}
\def\L{\Cal L}
\def\M{\Cal M}
\def\N{\Cal N}
\def\O{\Cal O}
\let\indlim\varinjlim
\def\sdarrow#1{\downarrow\hbox to 0pt{\scriptsize$#1$\hss}}
\def\suarrow#1{\uparrow\hbox to 0pt{\scriptsize$#1$\hss}}
\def\ssearrow#1{\searrow\hbox to 0pt{\scriptsize$#1$\hss}}
\def\section{\@startsection{section}{1}{\z@ }%
  {-3.5ex plus -1ex minus -.2ex}{2.3ex plus .2ex}{\bf }}
\long\def\refname{\par\kern -3ex
  \begin{center}\rm R\sc{eferences}\end{center}\par\kern 
  -2ex}
\def\@seccntformat#1{\csname the#1\endcsname.\quad}
\def\@@@sect#1#2#3#4#5#6[#7]#8{%
  \ifnum #2>\c@secnumdepth 
  \def \@svsec {}\else \refstepcounter {#1}%
  \def\@svsec{}
  \fi 
  \@tempskipa #5\relax 
  \ifdim \@tempskipa >\z@ 
  \begingroup #6\relax \@hangfrom {\hskip #3\relax 
    \@svsec}{\interlinepenalty \@M #8\par }\endgroup 
  \csname #1mark\endcsname {#7}
  \else 
  \def \@svsechd {#6\hskip #3\@svsec #8\csname #1mark\endcsname {#7}}
  \fi \@xsect {#5}}
\def\@@@startsection#1#2#3#4#5#6{%
  \if@noskipsec \leavevmode \fi \par \@tempskipa #4\relax \@afterindenttrue 
  \ifdim \@tempskipa <\z@ \@tempskipa -\@tempskipa \@afterindentfalse 
  \fi \if@nobreak \everypar {}\else \addpenalty {\@secpenalty }\addvspace 
  {\@tempskipa }\fi \@ifstar {\@ssect {#3}{#4}{#5}{#6}}{\@dblarg 
    {\@@@sect {#1}{#2}{#3}{#4}{#5}{#6}}}}
\def\theparagraph{\thesection.\arabic{paragraph}}
\def\aparagraph{\@@@startsection{paragraph}{2}{\z@ }%
  {1.75ex plus .2ex minus .15ex}{-1em}{\bf(\theparagraph) } }
\def\paragraph{\@@@startsection{paragraph}{2}{\z@ }%
  {1.75ex plus .2ex minus .15ex}{-1em}{}{\bf(\theparagraph)} }
\let\c@theorem\c@paragraph
\title{Equivariant class group. II. \\
Enriched descent theorem}
\author{M{\sc itsuyasu} H{\sc ashimoto}}
\date{\normalsize
  Department of Mathematics, Okayama University\\
  Okayama 700--8530, JAPAN\\
  {\small \tt mh@okayama-u.ac.jp}}
\begin{document}

\maketitle
\footnote[0]
{2010 \textit{Mathematics Subject Classification}. 
  Primary 14L30.
  Key Words and Phrases.
  principal fiber bundle, descent theory, class group, Picard group
}

\begin{abstract}
We prove a version of Grothendieck's descent theorem
on an `enriched' principal fiber bundle, a principal fiber bundle with 
an action of a larger group scheme.
Using this, we prove the isomorphisms of the equivariant Picard and the
class groups arising from such a principal fiber bundle.
\end{abstract}

\section{Introduction}

This paper is a continuation of \cite{Hashimoto4}.

Let $S$ be a scheme, and $G$ a flat $S$-group scheme.
In \cite{Hashimoto4}, we have defined the equivariant class group 
$\Cl(G,X)$ for a locally Krull $G$-scheme $X$.
Utilizing Grothendieck's descent theorem, we have proved that
for a principal $G$-bundle $\varphi:X\rightarrow Y$, 
$Y$ is locally Krull, and the inverse image functor $\varphi^*$ 
induces an isomorphism $\Cl(Y)\rightarrow \Cl(G,X)$ 
(\cite[(5.32)]{Hashimoto4}).

In this paper, we generalize this to an enriched version.
Let $f:G\rightarrow H$ be an fpqc homomorphism, and $N:=\Ker f$.
Let $\varphi:X\rightarrow Y$ be a $G$-morphism which is also
a principal $N$-bundle.
We call such a morphism a {\em $G$-enriched principal $N$-bundle}
(the name \lq equivariant principal bundle' is reserved for a 
different notion, which is deeply related, see 
Example~\ref{equivariant-bundle.ex}).
The purpose of this paper is to prove the enriched descent theorem,
which yields an isomorphism 
$\varphi^{*}:\Cl(H,Y)\rightarrow \Cl(G,X)$.
A similar isomorphism
$\varphi^{*}:\Pic(H,Y)\rightarrow\Pic(G,X)$ is also proved.

The isomorphism is induced by the corresponding equivalence
of the categories of quasi-coherent sheaves, 
$\varphi^*:\Qch(H,Y)\rightarrow \Qch(G,X)$.
This enriched Grothendieck's descent is conceptually trivial,
and probably checked relatively easily for simpler cases.
In this paper, we do not assume that $\varphi$ is flat, quasi-compact, 
quasi-separated, or locally of finite type
(these assumptions are automatically satisfied if $N\rightarrow S$ 
satisfies the same conditions, see Lemma~\ref{principal-descent.thm}).
However, the original Grothendieck's descent is known under
the same general settings (even more is known, see \cite[(4.46)]{Vistoli}),
and we try to prove the enriched version without these redundant assumptions.

The quasi-inverse of $\varphi^*$ is $(?)^N\circ \varphi_*$, 
the direct image followed by the $N$-invariance.
However, this does not mean $\varphi_*$ preserves the quasi-coherence,
as we do not assume that $\varphi$ is quasi-compact quasi-separated.
As we do not assume that $\varphi$ is flat, there are some technical
problems in treating the quasi-coherent sheaves on the small Zariski site,
and it is more comfortable to treat the big site with flat topology.
Finally, as 
we do not put any finiteness assumptions on $\varphi:X\rightarrow Y$,
it is suitable to treat the fpqc topology, not the fppf topology.

As a biproduct, we have an isomorphism
$\varphi^{*}:\Pic(H,Y)\rightarrow \Pic(G,X)$ for non-flat huge groups
(Corollary~\ref{princ-pic-equiv.thm}).

Sections~\ref{qfpqc.sec} to \ref{gro.sec} are devoted to preliminaries.

In section~\ref{qfpqc.sec}, we define {\em quasi-fpqc} ({\em qfpqc} for short) 
morphisms of schemes (Definition~\ref{qfpqc.def}).
The topology defined by quasi-fpqc morphisms is the same as that defined
by fpqc morphisms.
However, any group scheme is qfpqc, and it is comfortable to treat
qfpqc morphisms when we do not put the flatness assumption on groups.
We point out that various properties of morphisms of schemes descend
with respect to qfpqc morphisms (Lemma~\ref{qfpqc-descent.thm}).

In section~\ref{kappa.sec}, we give a way to make the big fpqc site
skeletally small in a reasonable way.
We measure the \lq size' of a morphism by a cardinal, and we only treat
the morphisms whose size is bounded by a fixed regular cardinal.
This way, we can avoid set-theoretic problems in treating fpqc topology.
Sizing schemes using a cardinal is treated in \cite[(3.9)]{SP},
and our approach can be viewed as a relative version of that in \cite{SP}.
Our definition of $\kappa$-morphisms for a regular cardinal $\kappa$
enables us to treat all schemes $T$ (of arbitrary size) as a base
scheme (see the condition {\bf 4} in (\ref{cond-C.par})).

Section~\ref{sheaves.sec} is a preliminary on sheaves on ringed sites.
As an abstraction of Kempf's result \cite[Theroem~8]{Kempf}, we give a 
sufficient condition for cohomology functors to be compatible with
direct limits (Lemma~\ref{Kempf.thm}).
Using this, we compare quasi-coherent sheaves on different ringed sites.

In section~\ref{gro.sec}, we introduce several sites related to 
diagrams of schemes.
It is convenient to grasp $G$-equivariant quasi-coherent sheaves on $X$
as a quasi-coherent sheaves over the finite diagram of schemes 
$B_G^M(X)$, and a well-behaved treatment of dualizing complexes is
known for this category \cite{ETI}.
However, sometimes we want to consider the full simplicial schemes.
It seems that 
this would be necessary when we pursue the cohomological descent as in
\cite{Saint-Donat}.
Moreover, for our purpose, considering big sites is necessary.
Some of our argument in section~\ref{e-gro.sec} 
does not work for small Zariski site.
The point is the exactness of the inverse image (Lemma~\ref{inverse-image.thm})
and the fact that Lipman's theta is an isomorphism 
(Lemma~\ref{theta.thm}).

On the other hand, the big sites with non-flat morphisms are
too big when we consider the derived category, since the 
full subcategory of quasi-coherent sheaves is not closed under
kernels in the whole category of modules.
So we compare these sites, and give a proof for the fact that the
categories of quasi-coherent sheaves are all equivalent, although
it seems that this is well-known for experts.

In the last section, section~\ref{e-gro.sec}, we prove the enriched
Grothendieck's descent theorem.
Our main theorem is on the equivariant modules on the
fpqc site, Theorem~\ref{principal-main.thm}.
The author does not know if the same statement for the small Zariski site
is true.
We get the corresponding assertion for quasi-coherent sheaves 
(note that the quasi-coherent sheaves are essentially 
independent of the choice of the ringed site, as proved in 
section~\ref{gro.sec}) immediately (Corollary~\ref{descent-main-cor.thm}).
We do not know how to prove Corollary~\ref{descent-main-cor.thm} directly
without using the fpqc site in this generality, although it seems that
the additional assumption that $G$ is flat and quasi-compact quasi-separated
would make it possible relatively easily.
The choice of the fpqc site is effective in our proof.
See the proof of Lemma~\ref{complicated.thm} and 
Theorem~\ref{principal-main.thm}.

As corollaries, we prove the isomorphisms between the equivariant 
Picard and the class groups arising from an enriched principal
fiber bundle, see Corollary~\ref{princ-pic-equiv.thm} and 
Corollary~\ref{princ-ref-equiv.thm}.

We will see some applications of the enriched descent in the 
continuation of this paper \cite{Hashimoto5}.

Acknowledgment:
The author is grateful to Professor Shouhei Ma for kindly showing him
the reference \cite{Vistoli}.

\section{Quasi-fpqc morphisms and enriched principal bundles}
\label{qfpqc.sec}

\paragraph
This paper is a continuation of \cite{Hashimoto4}.
We follow the notation and terminology there, unless otherwise specified.
Throughout this paper, let $S$ be a scheme.

\begin{definition}\label{qfpqc.def}
A morphism of schemes $\varphi:X\rightarrow Y$ is said to be {\em quasi-fpqc}
(or {\em qfpqc} for short)
if there exists some morphism $\psi:Z\rightarrow X$ such that
$\varphi\psi$ is fpqc.
\end{definition}

\begin{lemma}\label{qfpqc-basic.thm}
The following hold true.
\begin{enumerate}
\item[\bf 1] A morphism $\varphi:X\rightarrow Y$ is fpqc if and only 
if it is qfpqc and flat.
\item[\bf 2] A base change of a qfpqc morphism is qfpqc.
\item[\bf 3] A composite of qfpqc morphisms is qfpqc.
\item[\bf 4] A qfpqc morphism is submersive.
In particular, it is surjective.
\item[\bf 5] A group scheme $G$ over $S$ is qfpqc over $S$.
\end{enumerate}
\end{lemma}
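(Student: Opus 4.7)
The plan is to dispose of the five parts in order, using little more than the definition, the stability of fpqc morphisms under base change and composition, and the existence of the identity section for a group scheme.

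For \textbf{1}, the forward direction is immediate since an fpqc morphism is flat and serves as its own witness via $\psi=\mathrm{id}$. For the converse, assuming $\varphi$ flat and some $\varphi\psi$ fpqc, surjectivity of $\varphi$ is inherited from $\varphi\psi$, so $\varphi$ is faithfully flat; the quasi-compactness half of the fpqc definition can be recovered by taking, for each affine open $V\subseteq Y$, a quasi-compact open $W\subseteq(\varphi\psi)^{-1}(V)$ with $\varphi\psi(W)=V$, covering the quasi-compact set $\psi(W)\subseteq\varphi^{-1}(V)$ by finitely many affine opens of $X$ contained in $\varphi^{-1}(V)$, and taking their union. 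This is the main (and only nontrivial) obstacle, as it requires unpacking an equivalent formulation of fpqc in terms of affine opens.

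Parts \textbf{2} and \textbf{3} are formal. For base change along $Y'\to Y$, pulling back the witness $\psi$ yields $\psi':Z\times_Y Y'\to X\times_Y Y'$ with $\varphi'\psi'=(\varphi\psi)\times_Y\mathrm{id}_{Y'}$, fpqc as the base change of an fpqc morphism. For the composition of $\varphi:X\to Y$ and $\varphi':Y\to Y''$ with witnesses $\psi:Z\to X$ and $\psi':Z'\to Y$, I take $Z'':=Z\times_Y Z'$ with structure maps $\varphi\psi$ and $\psi'$, and set $\psi'':=\psi\circ p_Z$; then $\varphi'\varphi\psi''=\varphi'\psi'\circ p_{Z'}$ is a composition of the fpqc map $\varphi'\psi'$ with the base change $p_{Z'}$ of the fpqc map $\varphi\psi$, hence fpqc.

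For \textbf{4}, fpqc morphisms are (universally) submersive, and the identity $(\varphi\psi)^{-1}(U)=\psi^{-1}(\varphi^{-1}(U))$ together with continuity of $\psi$ shows that submersiveness of $\varphi\psi$ transfers to $\varphi$; surjectivity of $\varphi$ is likewise inherited from that of $\varphi\psi$. Finally, for \textbf{5}, the identity section $e:S\to G$ of the structure morphism $\pi:G\to S$ satisfies $\pi\circ e=\mathrm{id}_S$, which is trivially fpqc, so $\pi$ is qfpqc with witness $e$.
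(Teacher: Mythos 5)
Your proposal is correct and follows essentially the same route as the paper: the only nontrivial point is the converse in \textbf{1}, where you recover quasi-compactness by enlarging the quasi-compact set $\psi(W)$ to a quasi-compact open of $\varphi^{-1}(V)$ with image $V$, exactly as the paper does (it works with arbitrary quasi-compact opens of $Y$ rather than affine ones, an immaterial difference). Parts \textbf{2}--\textbf{5} match the paper's (mostly omitted) arguments, with your explicit witness $Z\times_Y Z'$ in \textbf{3} and the identity section in \textbf{5} being the intended constructions.
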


\begin{proof}
{\bf 1} The \lq only if' part is trivial.
We prove the \lq if' part.
So there is a morphism $\psi:Z\rightarrow X$ such that $\varphi\psi$ is
fpqc.
Let $U$ be a quasi-compact open subset of $Y$.
Then there is a quasi-compact open subset $V$ of $Z$ such that
$\varphi(\psi(V))=U$.
As $\psi(V)$ is quasi-compact and contained in $\varphi^{-1}(U)$, 
there is a quasi-compact open subset $W$ of $\varphi^{-1}(U)$ containing
$\psi(V)$.
Then $\varphi(W)=U$.
So $\varphi$ is fpqc.

{\bf 2} and {\bf 3} are easy.
{\bf 4} follows from \cite[(2.3.12)]{EGA-IV-2}.
{\bf 5} is because $S\rightarrow G\rightarrow S$ is the identity,
where the first map $S\rightarrow G$ is the unit of the group scheme.
\end{proof}

\begin{lemma}\label{qfpqc-descent.thm}
Let $\Bbb P$ be a property of morphisms of schemes.
Assume that
\begin{enumerate}
\item[\bf 1] \(Zariski local property\)
If $f:X\rightarrow Y$ is a morphism and $Y=\bigcup_i U_i$ is an 
affine open covering, then 
$f$ satisfies $\Bbb P$ if and only if
$f|_{f^{-1}(U_i)}:f^{-1}(U_i)\rightarrow U_i$
satisfies $\Bbb P$ for each $i$.
\item[\bf 2] Let $f:X\rightarrow Y$ be a morphism,
$g:Y'\rightarrow Y$ a 
morphism, and assume that $Y$ and $Y'$ are affine.
\begin{enumerate}
\item[\rm(i)] \(base change for affine bases\)
If $f$ satisfies $\Bbb P$, then the base change
$f':Y'\times_Y X\rightarrow Y'$ of $f$ by $g$ satisfies $\Bbb P$.
\item[\rm(ii)] \(flat descent for affine bases\)
If $f'$ satisfies $\Bbb P$ and $g$ is faithfully flat, then $f$ satisfies
$\Bbb P$.
\end{enumerate}
\end{enumerate}
Then if $f:X\rightarrow Y$ is a morphism of schemes, $g:Y'\rightarrow Y$
a qfpqc morphism of schemes and the base change $f'$ satisfies $\Bbb P$, 
then $f$ satisfies $\Bbb P$.
In particular, if $f'$ satisfies one of 
separated, quasi-compact, quasi-separated, 
locally of finite presentation, proper, affine, finite, flat, faithfully
flat, smooth, 
unramified, \'etale, submersive, 
a closed immersion, an open immersion, an immersion, 
an isomorphism, fpqc, and qfpqc, then $f$ satisfies the
same property.
If $\Bbb Q$ is a property of an algebra essentially of finite type over a 
field such that the base change and the descent holds for any base 
field extension, and $\Bbb P$ is \lq any fiber satisfies $\Bbb Q$,' then
$\Bbb P$ descends with respect to a qfpqc base change.
So if $\Bbb Q$ is one of geometrically normal, geometrically reduced, 
Cohen--Macaulay, Gorenstein, and local complete intersection, and $f'$
satisfies $\Bbb P$, then $f$ satisfies $\Bbb P$.
\end{lemma}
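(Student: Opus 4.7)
The plan is to reduce the problem to the case where both $Y$ and $Y'$ are affine, so that the affine base change and descent hypotheses apply directly. By the Zariski local property {\bf 1}, I may assume $Y$ is affine: cover $Y$ by affine opens $U_i$, pull back both $g$ (which remains qfpqc by Lemma~\ref{qfpqc-basic.thm}) and $f$, and check the property on each restriction.

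Since $g$ is qfpqc, I pick some $h:Z\rightarrow Y'$ such that $gh:Z\rightarrow Y$ is fpqc. Because $Y$ is affine and hence quasi-compact, the fpqc property of $gh$ yields a quasi-compact open $Z_0\subset Z$ with $(gh)(Z_0)=Y$. Cover $Z_0$ by finitely many affine opens $W_j$ of $Z$, and refine further so that each $W_j$ maps under $h$ into some affine open $V_j$ of $Y'$; this uses quasi-compactness of each $W_j$ to extract a finite subcover from the preimages under $h$ of an affine open cover of $Y'$. Set $W:=\coprod_j W_j$, an affine scheme. The composition $W\rightarrow Z_0\hookrightarrow Z\rightarrow Y$ is faithfully flat, since $W\rightarrow Z_0$ is a surjective open immersion and $Z_0\rightarrow Y$ is a flat surjection.

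For each $j$, by {\bf 1} applied to $f'$, the restriction $f'^{-1}(V_j)\rightarrow V_j$, which is $X\times_Y V_j\rightarrow V_j$, satisfies $\Bbb P$. By {\bf 2}(i), the affine base change along $W_j\rightarrow V_j$ satisfies $\Bbb P$, yielding that $X\times_Y W_j\rightarrow W_j$ does. Taking the disjoint union and reapplying {\bf 1} (since $\{W_j\}$ is an affine open cover of $W$), the morphism $X\times_Y W\rightarrow W$ satisfies $\Bbb P$. Then {\bf 2}(ii) applied to the faithfully flat morphism $W\rightarrow Y$ between affine schemes gives that $f$ satisfies $\Bbb P$.

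For each concrete property in the list, the three hypotheses are standard consequences of \cite{EGA-IV-2}, so the general statement applies. For fiber properties defined via some $\Bbb Q$, conditions {\bf 1} and {\bf 2}(i) are immediate from the definition; for {\bf 2}(ii), every $y\in Y$ has a preimage $y'\in Y'$ because a faithfully flat morphism is surjective, and the fiber over $y'$ is the base change of the fiber over $y$ along $k(y)\hookrightarrow k(y')$, reducing to the descent hypothesis on $\Bbb Q$ under field extensions. The main obstacle will be the refinement step constructing $W$: even once fpqc-ness furnishes a quasi-compact open $Z_0\subset Z$ surjecting onto $Y$, one must carefully subdivide its affine opens so each maps into a single affine open of $Y'$, in order to invoke {\bf 2}(i).
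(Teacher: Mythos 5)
Your argument is correct, and it is precisely the ``rather formal'' argument that the paper declares and leaves to the reader: reduce to $Y$ affine via hypothesis {\bf 1}, use qfpqc-ness to produce a faithfully flat map from an affine scheme $W=\coprod_j W_j$ onto $Y$ refined so that each $W_j$ lands in an affine open of $Y'$, transport $\Bbb P$ to $X\times_YW\rightarrow W$ via {\bf 1} and {\bf 2}(i), and conclude by {\bf 2}(ii). (Two cosmetic points: $\coprod_jW_j\rightarrow\bigcup_jW_j$ is a surjective local isomorphism, not an ``open immersion''; and the refinement producing the $W_j$ needs one more pass through basic affine opens of each piece, as the sets $h^{-1}(V)\cap W_j$ need not be affine --- both are harmless.) The one substantive thing you gloss over is the verification of hypotheses {\bf 1} and {\bf 2} for the listed example properties: most are indeed standard (the paper cites \cite[(2.36)]{Vistoli} rather than EGA), but the property ``qfpqc'' is introduced in this paper and is not covered by any standard reference, so the paper spends its entire written proof checking {\bf 2}(ii) for it --- namely, if $f'$ is qfpqc with $f'h$ fpqc and $g$ is faithfully flat affine (hence fpqc), then $gf'h=fg'h$ is fpqc, so $f$ is qfpqc. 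Your blanket appeal to the literature does not cover this case, so you should add that short verification.
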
 

\begin{proof}
The first assertion is rather formal, and is left to the reader.
Most of the examples of $\Bbb P$ are listed in \cite[(2.36)]{Vistoli}.
We only prove that the property $\Bbb P=\text{qfpqc}$ satisfies
{\bf 2}, (ii).
As $f'$ is qfpqc, there is a morphism $h:U\rightarrow X'$ such that 
$f'h$ is fpqc.
As $g$ is also fpqc, the composite 
$gf'h=fg'h$ is fpqc, where $g':X'=Y'\times_Y X
\rightarrow X$ is the second projection.
So $f$ is qfpqc, as required.
\end{proof}

\paragraph
From now on, unless otherwise specified,
let $G$ be an $S$-group scheme,
and $N\subset G$ a normal subgroup scheme of $G$.
That is, $N$ is a subscheme of $G$, $N$ itself is an $S$-group scheme,
and the inclusion $N\hookrightarrow G$ is a homomorphism such that
$G\times N\rightarrow G$ $((g,n)\mapsto gng^{-1})$ factors through $N$.

\begin{definition}
  We say that $\varphi:X\rightarrow Y$ is a {\em $G$-enriched principal
  $N$-bundle} if it is a $G$-morphism, and is a principal $N$-bundle
\cite[(2.6)]{Hashimoto4}.
\end{definition}

The following is immediate from the definition.

\begin{lemma}\label{principal-base-change.thm}
  Let $\varphi:X\rightarrow Y$ be a $G$-enriched principal $N$-bundle,
  and $h:Y'\rightarrow Y$ be a $G$-morphism.
  If the action of $N$ on $Y'$ is trivial, then
  the second projection $p_2: X\times_YY'\rightarrow Y'$ is a 
  $G$-enriched principal $N$-bundle.
  \qed
\end{lemma}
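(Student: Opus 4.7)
The plan is to assemble the statement from two standard facts, namely that a base change of a principal $N$-bundle is again a principal $N$-bundle, and that a fibered product of $G$-schemes over a $G$-scheme inherits a natural diagonal $G$-action. The point is simply to verify that these two structures on $X\times_Y Y'$ are compatible, which ultimately uses only that $N$ acts trivially on $Y'$.

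First I would describe the $G$-action on $X\times_YY'$. Since $\varphi:X\to Y$ and $h:Y'\to Y$ are both $G$-morphisms, the diagonal action $g\cdot(x,y'):=(gx,gy')$ is well-defined on the fiber product, makes $p_2$ into a $G$-morphism, and also makes the first projection $p_1:X\times_YY'\to X$ a $G$-morphism. Then I would observe that since $N$ acts trivially on $Y'$, the restriction of this diagonal $G$-action to $N$ coincides with the action $n\cdot(x,y')=(nx,y')$, i.e.\ it is precisely the $N$-action pulled back from $X$ along the base change $h$.

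Second I would invoke the stability of principal $N$-bundles under base change (this is part of the definition in \cite[(2.6)]{Hashimoto4}, or equivalently follows from descent of the freeness of the action along the fpqc base change $h^*$): the morphism $p_2:X\times_YY'\to Y'$, together with the $N$-action pulled back from $X$, is a principal $N$-bundle. Combined with the previous paragraph, where this same $N$-action was identified with the restriction to $N$ of the diagonal $G$-action, we conclude that $p_2$ is a $G$-morphism which is simultaneously a principal $N$-bundle, i.e.\ a $G$-enriched principal $N$-bundle.

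There is no real obstacle here, which is why the author writes $\qed$ after the statement; the only thing one has to be careful about is exactly the identification of the restricted diagonal $G$-action with the base-changed $N$-action, and that is immediate from the triviality of the $N$-action on $Y'$. If one wished to be thorough, one could additionally note via Lemma~\ref{qfpqc-basic.thm} and Lemma~\ref{qfpqc-descent.thm} that the relevant fpqc/qfpqc hypotheses underlying the principal bundle property transfer under base change, but this is subsumed in the cited base-change stability.
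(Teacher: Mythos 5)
Your proof is correct and matches the paper's intent exactly: the paper gives no proof at all, declaring the lemma ``immediate from the definition,'' and the two facts you assemble --- that the diagonal $G$-action on $X\times_Y Y'$ restricts on $N$ to the base-changed $N$-action precisely because $N$ acts trivially on $Y'$, together with base-change stability of principal $N$-bundles --- are exactly the content of that claim. The only slip is your parenthetical alternative justification via ``descent of the freeness of the action along the fpqc base change $h^*$,'' since $h$ is an arbitrary $G$-morphism and need not be fpqc; but your primary citation of base-change stability (a base change of a principal $N$-bundle along \emph{any} morphism is again one, as the trivializing fpqc cover of $Y$ pulls back to one of $Y'$) is the right one and suffices.
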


\begin{lemma}\label{principal-mumford.thm}
An $N$-invariant $G$-morphism $\varphi:X\rightarrow Y$ is a $G$-enriched
principal $N$-bundle if and only if $\varphi$ is qfpqc and the
map $\Phi:N\times X\rightarrow X\times_Y X$ given by 
$\Phi(n,x)=(nx,x)$ is an isomorphism.
\end{lemma}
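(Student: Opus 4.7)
My plan is to separate the two implications: the forward direction is a routine descent argument, while the reverse direction carries the substantive content.

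For the forward direction, a principal $N$-bundle $\varphi:X\rightarrow Y$ in the sense of \cite[(2.6)]{Hashimoto4} is $N$-equivariantly fpqc-locally isomorphic to a trivial bundle $N\times Y'\rightarrow Y'$ over some fpqc cover $Y'\rightarrow Y$. The trivial bundle is qfpqc by Lemma~\ref{qfpqc-basic.thm}, {\bf 5} and {\bf 2} (base change of the qfpqc morphism $N\rightarrow S$), and its analogous $\Phi$ is tautologically an isomorphism. Both conditions descend along the fpqc cover---qfpqc by Lemma~\ref{qfpqc-descent.thm}, and ``$\Phi$ an isomorphism'' by standard fpqc descent of isomorphisms---so they hold for $\varphi$ itself.

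For the reverse direction, assume $\varphi$ is qfpqc with $\Phi$ an isomorphism. Choose $\psi:Z\rightarrow X$ with $\varphi\psi:Z\rightarrow Y$ fpqc (by the definition of qfpqc); I will use $\varphi\psi$ as the trivializing fpqc cover of $Y$. The key computation identifies the base change of $\varphi$ along $\varphi\psi$:
\[
X\times_Y Z \;\cong\; (X\times_Y X)\times_X Z \;\cong\; (N\times X)\times_X Z \;\cong\; N\times Z,
\]
where the middle step applies $\Phi^{-1}$; since $X\times_Y X\rightarrow X$ is the second projection, which corresponds under $\Phi^{-1}$ to the projection $q_2:N\times X\rightarrow X$, the last fiber product with $\psi:Z\rightarrow X$ collapses, leaving the standard projection $N\times Z\rightarrow Z$.

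The main step is to check $N$-equivariance of this identification, with $N$ acting on $X\times_Y Z$ through the first factor and on $N\times Z$ by left translation. Tracking points: $(a,z)\in X\times_Y Z$ corresponds to $(n,z)\in N\times Z$ with $a=n\psi(z)$, and the action $m\cdot(a,z)=(ma,z)$ becomes $(mn,z)$ since $ma=(mn)\psi(z)$. Hence the base change is $N$-equivariantly the trivial bundle, so $\varphi$ is a principal $N$-bundle; together with the hypothesis that $\varphi$ is a $G$-morphism, this makes it a $G$-enriched principal $N$-bundle. The main obstacle is careful bookkeeping of the $N$-action through the chain of identifications and matching conventions with \cite[(2.6)]{Hashimoto4}; the conceptual heart---qfpqc provides an fpqc cover over which the $\Phi$-isomorphism is an explicit trivialization---is clean.
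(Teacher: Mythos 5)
Your proof is correct, but it takes a different route from the paper, whose entire proof of this lemma is the one\--line citation ``This is immediate from \cite[(4.43)]{Vistoli}.'' What you have done is unfold the content of that cited result and adapt it to the qfpqc setting of this paper. Your forward direction (reduce to the trivial bundle over an fpqc cover, where $\Phi$ is tautologically invertible, then descend both ``qfpqc'' and ``isomorphism'' via Lemma~\ref{qfpqc-descent.thm}) and your reverse direction (use $\Phi^{-1}$ to identify the pullback of $\varphi$ along a suitable fpqc morphism to $Y$ with the trivial bundle) are exactly the standard argument behind Vistoli's proposition; the one genuinely new wrinkle you supply, and it is the right one, is the insertion of $\psi:Z\rightarrow X$ with $\varphi\psi$ fpqc so that the trivializing cover is an honest fpqc morphism even though $\varphi$ itself is only qfpqc (since $N$ is not assumed flat, one cannot simply use $\varphi$ as the cover, as one would in the classical torsor argument). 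Your $T$\--point bookkeeping of the chain $X\times_YZ\cong(X\times_YX)\times_XZ\cong(N\times X)\times_XZ\cong N\times Z$ and of its $N$\--equivariance is sound, and the $N$\--invariance hypothesis is correctly (if implicitly) used to make $\Phi$ land in $X\times_YX$ and to make the first\--factor action on $X\times_YZ$ well defined. The trade\--off is the usual one: the paper buys brevity by outsourcing to \cite{Vistoli}, while your version buys self\--containedness and makes visible where the qfpqc hypothesis (as opposed to fpqc) actually enters.
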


\begin{proof}
This is immediate from \cite[(4.43)]{Vistoli}.
\end{proof}

\begin{lemma}\label{fpqc-equal.thm}
A qfpqc morphism is an epimorphism.
That is,   
if $f:X\rightarrow Y$ is a qfpqc morphism,
  $g$ and $h$ are morphisms $Y\rightarrow Z$, and
$gf=hf$, then $g=h$.
\end{lemma}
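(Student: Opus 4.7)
The plan is to reduce to the case when $f$ is itself fpqc, and then to argue stalk by stalk using faithful flatness of local ring maps.

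First, unwinding Definition~\ref{qfpqc.def}, pick $\psi:W\rightarrow X$ such that $f\psi$ is fpqc. From $gf=hf$ we get $g(f\psi)=h(f\psi)$, so it suffices to prove the assertion when $f$ itself is fpqc. Assume this henceforth.

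Next, by Lemma~\ref{qfpqc-basic.thm}, {\bf 4}, the map $f$ is surjective. For each $y\in Y$ choose $x\in X$ with $f(x)=y$; then $g(y)=g(f(x))=h(f(x))=h(y)$, and hence $g$ and $h$ coincide as (continuous) maps of topological spaces. It remains to compare the sheaf maps. Fix $y\in Y$, set $z:=g(y)=h(y)$, and pick $x\in X$ above $y$. Since $f$ is flat, the induced local homomorphism $f^{\#}_{x}:\O_{Y,y}\rightarrow\O_{X,x}$ is a flat local map between local rings, hence faithfully flat, hence injective. From $gf=hf$ we obtain
$f^{\#}_{x}\circ g^{\#}_{y}=(gf)^{\#}_{x}=(hf)^{\#}_{x}=f^{\#}_{x}\circ h^{\#}_{y}$,
and injectivity of $f^{\#}_{x}$ forces $g^{\#}_{y}=h^{\#}_{y}$. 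Since this holds at every stalk, the two sheaf maps agree, and combined with the previous step this yields $g=h$.

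The one subtle point is that fpqc morphisms are not in general affine, so one cannot directly deduce equality of sheaf maps from faithful flatness of a global ring of sections; passing to stalks neatly sidesteps this. Beyond that the argument is essentially formal, and this is what I would expect to be the main (though mild) obstacle.
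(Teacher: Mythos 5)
Your proof is correct, but it takes a genuinely different route from the paper's. The paper forms the equalizer of $g$ and $h$ as the pullback of the diagonal $\Delta:Z\rightarrow Z\times_{\Spec\Bbb Z}Z$ along $(g,h)$, notes that its further pullback to $X$ is all of $X$ because $gf=hf$, and then invokes Lemma~\ref{qfpqc-descent.thm} to descend the property \lq\lq is an isomorphism'' along the qfpqc morphism $f$; this is uniform, stays entirely inside the descent machinery already set up, and avoids touching structure sheaves at all. You instead reduce to the fpqc case by precomposing with $\psi$, use surjectivity to identify the underlying continuous maps, and then compare the local homomorphisms on stalks using that a flat local map of local rings is faithfully flat, hence injective. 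This is more elementary and self-contained (it does not need Lemma~\ref{qfpqc-descent.thm}). The one point you should make explicit is the final step \lq\lq since this holds at every stalk, the two sheaf maps agree'': the stalk of $g_*\O_Y$ at $z$ is not $\O_{Y,y}$, so what you are really checking is that the two adjoint maps $g^{-1}\O_Z\rightarrow\O_Y$ agree on stalks, which does suffice because $(g^{-1}\O_Z)_y=\O_{Z,g(y)}$ and the adjunction $g^{-1}\dashv g_*$ is a bijection on Hom-sets; equivalently, a morphism of schemes is determined by its underlying map together with the induced local homomorphisms. With that standard remark supplied, your argument is complete.
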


\begin{proof}
  Set $S:=\Spec \Bbb Z$.
  Then there is a commutative diagram
  \[
  \xymatrix{
    X_0 \ar[d]^{d_X} \ar[r] & Y_0 \ar[d]^{d_Y} \ar[r] & Z \ar[d]^\Delta \\
    X \ar[r]^f & Y \ar[r]^-{(g,h)} & Z\times_S Z
  }
  \]
  with cartesian squares.
  By assumption, $d_X$ is an isomorphism.
  By Lemma~\ref{qfpqc-descent.thm}, 
$d_Y$ is an isomorphism, and hence $g=h$.
\end{proof}

\begin{corollary}\label{trivial.thm}
  Let $S$ be a scheme, $F$ an $S$-group scheme, and $\varphi:X\rightarrow Y$ 
an 
  qfpqc $F$-morphism.
  If the action of $F$ on $X$ is trivial, then the action of $F$ on $Y$ is
  trivial.
\end{corollary}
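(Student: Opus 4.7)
The plan is to reduce triviality of the $F$-action on $Y$ to the epimorphism property of qfpqc morphisms established in Lemma~\ref{fpqc-equal.thm}. Write $a_X\colon F\times_S X\to X$ and $a_Y\colon F\times_S Y\to Y$ for the two actions, and $p_2^X$, $p_2^Y$ for the respective second projections. The hypothesis that $F$ acts trivially on $X$ is the equality $a_X=p_2^X$, and the goal is the corresponding equality $a_Y=p_2^Y$.

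First I would translate the $F$-equivariance of $\varphi$ into the commuting relation $\varphi\circ a_X=a_Y\circ(\id_F\times\varphi)$. Substituting $a_X=p_2^X$ gives $\varphi\circ p_2^X=a_Y\circ(\id_F\times\varphi)$. Since $\varphi\circ p_2^X=p_2^Y\circ(\id_F\times\varphi)$ by the definition of the second projections, this yields
\[
a_Y\circ(\id_F\times\varphi)\;=\;p_2^Y\circ(\id_F\times\varphi).
\]

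Next I would observe that $\id_F\times\varphi\colon F\times_S X\to F\times_S Y$ is the base change of $\varphi$ along the projection $F\times_S Y\to Y$, hence is qfpqc by Lemma~\ref{qfpqc-basic.thm}, item~\textbf{2}. Applying Lemma~\ref{fpqc-equal.thm} (a qfpqc morphism is an epimorphism) to the displayed equation immediately cancels $\id_F\times\varphi$ on the right, giving $a_Y=p_2^Y$, which is precisely triviality of the $F$-action on $Y$.

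There is really no obstacle here once the equivariance identity has been written out; the only subtle point is the recognition that $\id_F\times\varphi$ is itself qfpqc, which is why I would flag the base change step explicitly. Everything else is formal diagram chasing, so the corollary should follow in a few lines.
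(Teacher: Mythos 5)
Your proof is correct and is essentially the paper's own argument: both compose with $\id_F\times\varphi$, use equivariance plus triviality on $X$ to get $a_Y\circ(\id_F\times\varphi)=p_2^Y\circ(\id_F\times\varphi)$, and cancel via Lemma~\ref{fpqc-equal.thm}. You merely spell out the two points the paper leaves implicit (the equivariance identity and the base-change justification that $\id_F\times\varphi$ is qfpqc).
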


\begin{proof}
  Let $f=1_F\times \varphi: F\times X\rightarrow F\times Y$.
  It is qfpqc.
  Then $af=p_2f$, as $\varphi$ is an $F$-morphism and the action of $F$ on
  $X$ is trivial, where $a:F\times Y\rightarrow Y$ is the action, and
  $p_2:F\times Y\rightarrow Y$ is the second projection.
  By Lemma~\ref{fpqc-equal.thm}, $a=p_2$, and the action of $F$ on $Y$
  is trivial.
\end{proof}

\begin{lemma}\label{principal-locally-trivial.thm}
  Let $X$ and $Y$ be $G$-schemes, and $\varphi:X\rightarrow Y$ be a
  $G$-morphism.
  Then the following are equivalent.
  \begin{enumerate}
  \item[\bf 1] $\varphi$ is a $G$-enriched principal $N$-bundle.
  \item[\bf 2] 
    There exists some qfpqc $G$-morphism
    $h:Y'\rightarrow Y$ such that the second projection 
    $p_2:X\times_Y Y'\rightarrow Y'$ is a $G$-morphism
which is a trivial $N$-bundle.
  \item[\bf 3] 
    There exists some qfpqc $S$-morphism
    $h:Y'\rightarrow Y$ such that the second projection 
    $p_2:X'=X\times_Y Y'\rightarrow Y'$ is a principal $N$-bundle.
  \end{enumerate}
\end{lemma}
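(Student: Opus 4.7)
The approach is to prove the cycle $\mathbf{1}\Rightarrow\mathbf{2}\Rightarrow\mathbf{3}\Rightarrow\mathbf{1}$, with Lemma~\ref{principal-mumford.thm} as the bridge between the ``(enriched) principal $N$-bundle'' condition and the pair ``qfpqc plus $\Phi$ an isomorphism'', and Lemmas~\ref{qfpqc-descent.thm} and~\ref{fpqc-equal.thm} supplying the descent through the qfpqc cover $h:Y'\to Y$.

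For $\mathbf{1}\Rightarrow\mathbf{2}$ I would take $Y':=X$ with $h:=\varphi$. Both are $G$-morphisms, and $h$ is qfpqc by Lemma~\ref{principal-mumford.thm}. The Mumford map $\Phi:N\times X\to X\times_Y X$ supplied by the same lemma identifies $p_2:X\times_Y X\to X$ with the trivial $N$-bundle $N\times X\to X$, and the diagonal $G$-action on the fibre product makes $p_2$ a $G$-morphism. The implication $\mathbf{2}\Rightarrow\mathbf{3}$ is immediate, since $G$-morphisms are $S$-morphisms and trivial $N$-bundles are principal $N$-bundles.

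For $\mathbf{3}\Rightarrow\mathbf{1}$ I would verify the three conditions of Lemma~\ref{principal-mumford.thm} for $\varphi$ itself: (a) $\varphi$ is qfpqc; (b) $\varphi$ is $N$-invariant; (c) $\Phi:N\times X\to X\times_Y X$ is an isomorphism. For (a), $p_2:X'\to Y'$ is qfpqc as a principal $N$-bundle, hence so is the composite $\varphi\circ p_1=h\circ p_2$ by Lemma~\ref{qfpqc-basic.thm}, and $p_1:X'\to X$ then serves as a witness making $\varphi$ qfpqc. For (b), starting from the $N$-invariance of $p_2$ I obtain
\[
\varphi\circ p_1\circ a_{X'}=h\circ p_2\circ a_{X'}=h\circ p_2\circ p_{X'}=\varphi\circ p_1\circ p_{X'},
\]
and then invoke $p_1\circ a_{X'}=a_X\circ(1_N\times p_1)$ and $p_1\circ p_{X'}=p_X\circ(1_N\times p_1)$ together with the fact that $1_N\times p_1$ is qfpqc (a base change of $p_1$) and hence an epimorphism by Lemma~\ref{fpqc-equal.thm}, allowing cancellation to yield $\varphi\circ a_X=\varphi\circ p_X$. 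For (c), under the canonical identifications $(N\times X)\times_Y Y'\cong N\times X'$ and $(X\times_Y X)\times_Y Y'\cong X'\times_{Y'} X'$, the base change $\Phi\times_Y\id_{Y'}$ coincides with the Mumford map $\Phi_{X'}$ associated to the principal bundle $p_2$, which is an isomorphism; the qfpqc descent of the property ``is an isomorphism'' (Lemma~\ref{qfpqc-descent.thm}) then forces $\Phi$ itself to be an isomorphism, and Lemma~\ref{principal-mumford.thm} packages (a)--(c) into the conclusion that $\varphi$ is a $G$-enriched principal $N$-bundle.

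The main (and essentially only) obstacle is making the identifications in (b) and (c) precise: one must check that the $N$-action on $X'$ under which $p_2$ is a principal bundle coincides with the one inherited from $X$ via the first projection $p_1$, so that both the cancellation argument for $N$-invariance and the identification of $\Phi_{X'}$ with the base change of $\Phi$ are legitimate. This is a direct unwinding of the fibre-product definitions together with the $G$-morphism hypothesis on $\varphi$; once it is in place, the rest is a routine application of the preceding lemmas.
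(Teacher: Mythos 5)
Your implications $\mathbf{1}\Rightarrow\mathbf{2}$ and $\mathbf{2}\Rightarrow\mathbf{3}$ coincide with the paper's, but for $\mathbf{3}\Rightarrow\mathbf{1}$ you take a genuinely different route. The paper never leaves the fpqc-local-triviality definition of a principal bundle: it picks an fpqc $h':Y''\rightarrow Y'$ trivializing $p_2$, uses the definition of qfpqc to produce $h'':Y'''\rightarrow Y''$ with $hh'h''$ fpqc, and then observes that the base change of $\varphi$ along this fpqc morphism is a trivial $N$-bundle, so $\varphi$ is a principal $N$-bundle by definition; the enrichment is just the standing hypothesis that $\varphi$ is a $G$-morphism. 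You instead descend the three conditions of Lemma~\ref{principal-mumford.thm} one at a time along $h$. That is sound but longer: the paper's composition-of-covers trick buys brevity and lets one skip your steps (b) and (c) entirely, while your version makes the descent mechanism (Lemma~\ref{qfpqc-descent.thm} for ``isomorphism'', Lemma~\ref{fpqc-equal.thm} for the cancellation) explicit. Two small points. First, in (a), $\varphi\circ p_1$ being qfpqc yields some $\psi$ with $\varphi p_1\psi$ fpqc, so the witness for $\varphi$ is $p_1\psi$, not $p_1$ itself. Second, the compatibility you flag at the end is slightly more than an ``unwinding'': the $N$-action on $X'=X\times_Y Y'$ induced from $a_X$ (with $N$ trivial on $Y'$) is only well defined once $\varphi$ is known to be $N$-invariant, so it must be taken as part of the implicit content of condition $\mathbf{3}$ rather than derived afterwards --- the same reading the paper's proof tacitly relies on. Once condition $\mathbf{3}$ is read that way, your (b) is essentially automatic and (c) is a clean qfpqc descent of the Mumford isomorphism.
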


\begin{proof}
  {\bf 1$\Rightarrow$2} Let $h=\varphi$, and use
Lemma~\ref{principal-mumford.thm}.

{\bf 2$\Rightarrow$3} is trivial.

{\bf 3$\Rightarrow$1} There exists some fpqc $h':Y''\rightarrow Y'$ such that
the base change $X''\rightarrow Y''$ is a trivial $N$-bundle.
As $hh'$ is qfpqc, there exists some $h'':Y'''\rightarrow Y''$ such that
$hh'h''$ is fpqc.
As the base change $X'''\rightarrow Y'''$ is also a trivial $N$-bundle,
$\varphi:X\rightarrow Y$ is a principal $N$-bundle.
As $\varphi$ is assumed to be a $G$-morphism, it is a $G$-enriched
principal $N$-bundle.
\end{proof}

\begin{lemma}\label{principal-descent.thm}
  Let $\varphi:X\rightarrow Y$ be a principal $G$-bundle,
and $\Bbb P$ a property of morphisms of schemes as in
Lemma~\ref{qfpqc-descent.thm}.
  If $G\rightarrow S$ satisfies $\Bbb P$, then 
$\varphi$ also satisfies $\Bbb P$.
\end{lemma}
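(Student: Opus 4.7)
\medskip
\noindent\textit{Proof plan.} The strategy is to combine local triviality of principal bundles in the fpqc (or qfpqc) topology with the descent statement already established in Lemma~\ref{qfpqc-descent.thm}. The point is that a principal $G$-bundle becomes isomorphic to the trivial bundle $G\times_S Y'\to Y'$ after a qfpqc base change $Y'\to Y$, and the trivial bundle is nothing but a base change of $G\to S$. So the hypothesis $\Bbb P$ on $G\to S$ will transfer to $\varphi$ via a base change step followed by a qfpqc descent step.

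First, by the very definition of a principal $G$-bundle (equivalently, by the case $N=G$ of Lemma~\ref{principal-locally-trivial.thm}, applied with $Y$ carrying the trivial $G$-action so that the $G$-enrichment condition is vacuous), there exists a qfpqc morphism $h:Y'\to Y$ such that the base change $\varphi':X\times_Y Y'\to Y'$ is a trivial $G$-bundle, i.e.\ is $G$-isomorphic to the second projection $p_2:G\times_S Y'\to Y'$. This projection is precisely the base change of $G\to S$ along the structural morphism $Y'\to S$.

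Next, I need the observation that under the hypotheses of Lemma~\ref{qfpqc-descent.thm}, the property $\Bbb P$ is stable not only under base change to \emph{affine} bases, but under arbitrary base change. Indeed, to verify $\Bbb P$ for the base change $G\times_S Y'\to Y'$, I can choose an affine open cover $\{V_j\}$ of $S$ and affine open covers $\{U_{ij}\}$ of each preimage of $V_j$ in $Y'$; by the Zariski local hypothesis {\bf 1} of Lemma~\ref{qfpqc-descent.thm} the problem reduces to checking $\Bbb P$ for each restriction $G\times_S U_{ij}\to U_{ij}$, which is the base change of the morphism $G\times_S V_j\to V_j$ (an affine-base restriction of $G\to S$, hence has $\Bbb P$ by {\bf 1}) along the morphism $U_{ij}\to V_j$ of affine schemes, and {\bf 2}(i) applies. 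Thus $\varphi'$ satisfies $\Bbb P$.

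Finally, since $h:Y'\to Y$ is qfpqc and $\varphi'$ satisfies $\Bbb P$, Lemma~\ref{qfpqc-descent.thm} directly yields that $\varphi$ satisfies $\Bbb P$. The only real subtlety is the reduction from arbitrary-base to affine-base change just sketched; this is the step I expect to write out with slightly more care, but it should be straightforward bookkeeping, and no actual work beyond the invocation of Lemma~\ref{qfpqc-descent.thm} and Lemma~\ref{principal-locally-trivial.thm} is needed.
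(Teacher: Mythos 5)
Your proposal is correct and follows essentially the same route as the paper: trivialize the bundle by a qfpqc (in the paper, fpqc) base change $Y'\to Y$, note that the trivial bundle $G\times_S Y'\to Y'$ inherits $\Bbb P$ from $G\to S$ via the reduction of general base change to the affine case using hypotheses {\bf 1} and {\bf 2}(i), and then descend along $Y'\to Y$ by Lemma~\ref{qfpqc-descent.thm}. The paper leaves the affine-to-general base change reduction as ``easy to check''; you spell it out, and your bookkeeping is sound.
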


\begin{proof}
There is an fpqc morphism $Y'\rightarrow Y$ such that the base change
$\varphi':Y'\times_Y X\rightarrow Y'$ is a trivial $G$-bundle.
  By the Zariski local property and the base change for affine bases,
it is easy to check the general base change property, and hence
$\varphi'$ satisfies $\Bbb P$.
By Lemma~\ref{qfpqc-descent.thm}, $\varphi$ satisfies $\Bbb P$.
\end{proof}

\section{$\kappa$-schemes}
\label{kappa.sec}

\paragraph Let $\kappa$ be a cardinal.
A topological space $X$ is said to be {\em $\kappa$-compact} 
if every open cover of 
$X$ has a subcover of cardinality strictly less than $\kappa$.
So $X$ is $1$-compact (resp.\ $2$-compact, $\aleph_0$-compact, 
$\aleph_1$-compact) if and only if $X$ is empty (resp.\ local 
\cite[(8.7)]{HO}, (quasi-)compact, 
Lindel\"of), where $\aleph_0=\#\Bbb Q$, and $\aleph_1$ is the successor 
cardinal of $\aleph_0$.

If $f:X\rightarrow Y$ is a surjective continuous map and $X$ is
$\kappa$-compact, then so is $Y$.

\paragraph
A cardinal is said to be regular if it is equal to its own cofinality
\cite[p.~257]{Enderton}.
In our paper, a regular cardinal is required to be infinite.
So $\kappa$ is regular if and only if for any decomposition
$\kappa=\bigcup_{i\in I}S_i$ for a family of subsets with $\#I<\kappa$, 
$\#S_i=\kappa$ for at least one $i$ \cite[Theorem~9T]{Enderton}.
Note that $\aleph_0$ and infinite successor cardinals are regular.
By the definition, an inaccessible cardinal is regular \cite[p.~254]{Enderton}.

Let $\kappa$ be regular.
Let $(Y_i)_{i\in I}$ be a family of subspaces of
$X$.
If $\bigcup_i Y_i=X$, each $Y_i$ is $\kappa$-compact, and 
$\#I<\kappa$, then $X$ is also $\kappa$-compact.

\paragraph
Let $\kappa$ be infinite.
(The underlying space of) a scheme is $\kappa$-compact if and only if
it is covered by an affine open covering of cardinality strictly less than 
$\kappa$.

\paragraph
A morphism of schemes $\varphi:X\rightarrow Y$ is said to be 
$\kappa$-compact if there exists some affine open covering $(U_i)_{i\in I}$ of 
$Y$ such that each $\varphi^{-1}(U_i)$ is $\kappa$-compact.
If $\kappa<\kappa'$ and $\varphi$ is $\kappa$-compact, then it is 
$\kappa'$-compact.
A quasi-compact morphism is nothing but an $\aleph_0$-compact morphism.
So a quasi-compact morphism is $\kappa$-compact.

\paragraph
Note that
$\varphi:X\rightarrow Y$ is $\kappa$-compact, and
$h:Y'\rightarrow Y$ is another morphism such that $Y'$ is quasi-compact,
then $Y'\times_Y X$ is $\kappa$-compact.
In particular, a base change of a $\kappa$-compact morphism is again 
$\kappa$-compact.
If $Y$ is quasi-compact, $\varphi:X\rightarrow Y$ is $\kappa$-compact, 
then $X$ is $\kappa$-compact.

\paragraph
Let 
$\kappa$ be regular.
If $\varphi:X\rightarrow Y$ is a morphism of schemes, 
$Y$ is $\kappa$-compact, and $\varphi$ 
is $\kappa$-compact, then $X$ is $\kappa$-compact.
It follows that a composite of $\kappa$-compact morphisms is again 
$\kappa$-compact in this case.

\paragraph
Let $\kappa$ be a regular cardinal.
A morphism of schemes $\varphi:X\rightarrow Y$ is said to be
$\kappa$-quasi-separated if the diagonal $\Delta_{X/Y}:X\rightarrow 
X\times_Y X$ is $\kappa$-compact.
A quasi-separated morphism is nothing but an $\aleph_0$-quasi-separated
morphism.
As in \cite[(1.2)]{EGA-IV-1}, we can prove the following.
An immersion is $\kappa$-quasi-separated.
A base change of a $\kappa$-quasi-separated morphism is 
$\kappa$-quasi-separated.
Let $f:X\rightarrow Y$ and $g:Y\rightarrow Z$ be morphisms.
If $f$ and $g$ are $\kappa$-quasi-separated, then so is $gf$.
If $gf$ is $\kappa$-quasi-separated, then so is $f$.
If $g$ is $\kappa$-quasi-separated and $gf$ is $\kappa$-compact, then
$f$ is $\kappa$-compact.

\paragraph
We call a $\kappa$-compact $\kappa$-quasi-separated morphism a 
$\kappa$-concentrated morphism.
The composition of $\kappa$-concentrated morphisms is $\kappa$-concentrated.
The base change of a $\kappa$-concentrated morphism is $\kappa$-concentrated.
If $gf$ and $g$ are $\kappa$-concentrated, then so is $f$.

\paragraph
Let $\lambda$ be an infinite cardinal.
Let $h:A\rightarrow B$ be a map of commutative rings.
We say that $h$ is of $\lambda$-type if $B$ is generated by a subset
whose cardinal is strictly less than $\lambda$ over $A$.
So $h$ is of finite type if and only if $h$ is of $\aleph_0$-type.
We say that a ring $A$ is $\lambda$-type if $\Bbb Z\rightarrow A$ is
of $\lambda$-type.
If $\lambda>\aleph_0$, then $A$ is of $\lambda$-type if and only if 
$\#A<\lambda$.
In particular, if $\lambda>\aleph_0$, then a subring of a ring of 
$\lambda$-type is again of $\lambda$-type.
If $A$ is a pure subring of $B$, and $B$ is a ring of $\lambda$-type,
then $A$ is also of $\lambda$-type.
If $\lambda>\aleph_0$, then this is because $A\subset B$.
If $\lambda=\aleph_0$, then $B$ is of finite type over $\Bbb Z$, and
hence so is $A$ by \cite{Hashimoto6}.
In particular, if $A\rightarrow B$ is faithfully flat and $B$ is
of $\lambda$-type, then so is $A$.
Let $B$ be an $A$-algebra, and $C$ a $B$-algebra.
If $B$ is of $\lambda$-type over $A$ and $C$ is of $\lambda$-type
over $B$, then $C$ is of $\lambda$-type over $A$.
If $C$ is of $\lambda$-type over $A$, then it is of $\lambda$-type
over $B$.

\paragraph
A morphism of schemes $\varphi:X\rightarrow Y$ is said to be 
locally of $\lambda$-type if there exists some 
affine open covering $(U_i)_{i\in I}$ and affine open covering
$(V^i_j)_{j\in J_i}$ of $\varphi^{-1}(U_i)$ for each $i$ such that 
$\Gamma(U_i,\O_Y)\rightarrow \Gamma(V^i_j,\O_X)$ is of $\lambda$-type.
A morphism locally of finite-type is locally of $\lambda$-type.
It is easy to see that a base change of a locally $\lambda$-type morphism
is again locally of $\lambda$-type.
Note that $\Spec B\rightarrow \Spec A$ is locally of $\lambda$-type if
and only if $A\rightarrow B$ is of $\lambda$-type.
The composite of two locally $\lambda$-type morphisms is again locally of
$\lambda$-type.
In particular, a morphism of schemes $\varphi:X\rightarrow Y$ is
locally of $\lambda$-type if and only if for any affine open subscheme 
$U=\Spec A$ of $Y$ and any affine open subscheme $V=\Spec B$ of $\varphi^{-1}
(U)$, $A\rightarrow B$ is of $\lambda$-type.
If $f:X\rightarrow Y$ and $g:Y\rightarrow Z$ are morphisms and
$gf$ is locally of $\lambda$-type, then so is $f$.

\paragraph
From now on, until the end of this paper, let $\kappa$ denote a
regular cardinal.
We say that a morphism of schemes $\varphi:X\rightarrow Y$ is
a $\kappa$-morphism if it is locally of $\kappa$-type and $\kappa$-concentrated.
A scheme $X$ is said to be a $\kappa$-scheme if $X\rightarrow\Spec \Bbb Z$ 
is a $\kappa$-morphism.
The composite of two $\kappa$-morphisms is a $\kappa$-morphism.
In particular, if $Y$ is a $\kappa$-scheme and $\varphi:X\rightarrow Y$ 
is a $\kappa$-scheme, then $X$ is a $\kappa$-scheme.
A base change of a $\kappa$-morphism is a $\kappa$-morphism.
In particular, a direct product of $\kappa$-schemes is a $\kappa$-scheme.
If $f:X\rightarrow Y$ and $g:Y\rightarrow Z$ are morphisms and
$gf$ is a $\kappa$-morphism, then so is $f$.

\paragraph
For a ring $A$, the number of quasi-compact open subsets of $\Spec A$ is
finite or less than or equal to $\#A$, because the 
compact open subsets of the form $\Spec A[1/f]$ form an open basis.
It follows that if a ring $A$ is of $\kappa$-type, then any subset 
of $\Spec A$ is $\kappa$-compact.
So any subscheme of a $\kappa$-scheme is again a $\kappa$-scheme.
If $X$ is a $\kappa$-scheme, then a local ring $\O_{X,x}$ is a $\kappa$-ring.
If $\varphi:X\rightarrow Y$ is an fpqc morphism and $X$ is a 
$\kappa$-scheme, then $Y$ is a $\kappa$-scheme.
For a given set of schemes $\Omega$, there exists some $\kappa$
such that any element of $\Omega$ is a $\kappa$-scheme.

\paragraph
We can add the properties of morphisms $\kappa$-compact, 
$\kappa$-quasi-separated, locally of $\kappa$-type to the list of
properties $\Bbb P$ in Lemma~\ref{qfpqc-descent.thm}.
So these properties descends with respect to qfpqc base change.
In particular, by  Lemma~\ref{basic.thm}, {\bf 5} and
Lemma~\ref{principal-descent.thm}, a principal $G$-bundle is a 
$\kappa$-morphism, if $G\rightarrow S$ is so.

\paragraph For a given scheme $S$, 
we denote the full subcategory of $\Sch/S$ consisting of
objects such that the structure morphisms are $\kappa$-morphisms
by $(\Sch/S)_\kappa$.
If $T\in (\Sch/S)_\kappa$, then $(\Sch/S)_\kappa/T$ is the same as
$(\Sch/T)_\kappa$.

\begin{lemma}
Let $S$ be a scheme, and $\kappa$ a regular cardinal.
Then the category $(\Sch/S)_\kappa$ is skeletally small.
\end{lemma}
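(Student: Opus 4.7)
The plan is to exhibit a set of atlas-plus-gluing data, depending only on $S$ and $\kappa$, that surjects onto the isomorphism classes of objects of $(\Sch/S)_\kappa$; skeletal smallness then follows immediately. To begin, since $S$ is a fixed scheme, the collection $\mathcal U$ of affine open subschemes of $S$ is a subset of the power set of the underlying space of $S$, hence an honest set. Setting
$$\mu := \sup_{U \in \mathcal U}\#\Gamma(U,\O_S), \qquad \lambda := \max(\mu,\kappa),$$
I fix a single cardinal depending only on $S$ and $\kappa$.

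Next I would uniformly present an arbitrary object $\varphi : X \to S$ of $(\Sch/S)_\kappa$. For every $U = \Spec A \in \mathcal U$, the base change $\varphi^{-1}(U) \to U$ is again a $\kappa$-morphism, since $\kappa$-compact, $\kappa$-quasi-separated, and locally of $\kappa$-type are each stable under base change (all collected earlier in this section). In particular $\varphi^{-1}(U)$ is $\kappa$-compact, so admits an affine open cover by strictly fewer than $\kappa$ charts, and by the pointwise characterization of locally $\kappa$-type every affine $V = \Spec B \subseteq \varphi^{-1}(U)$ satisfies $\#B \le \lambda$. Pairwise overlaps of such charts are themselves $\kappa$-compact by $\kappa$-quasi-separatedness of $\varphi$, hence covered by fewer than $\kappa$ affines of the same bounded size.

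Thus any $X$ over $S$ is recovered, up to isomorphism, from a datum consisting, for each $U \in \mathcal U$, of a $<\kappa$-indexed family of $\Gamma(U,\O_S)$-algebras of cardinality $\le \lambda$ together with gluing isomorphisms on overlaps of the same bounded size. The collection of all such data visibly forms a set determined by $\#\mathcal U$, $\lambda$, and $\kappa$, and the evident surjection onto isomorphism classes of objects of $(\Sch/S)_\kappa$ yields skeletal smallness. The main obstacle is purely the set-theoretic bookkeeping: one must record a presentation canonical enough to live inside this bounding universe while still recovering every $X$ up to $S$-isomorphism. The underlying scheme-gluing is the standard construction, and the substantive inputs — base-change stability of the properties comprising a $\kappa$-morphism — are already established in the paragraphs above.
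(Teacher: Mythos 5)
Your overall strategy (bound an atlas-plus-gluing datum by a set) is the same as the paper's, and your per-$U$ analysis is correct: for $U=\Spec A\subseteq S$ affine, the charts $V=\Spec B\subseteq\varphi^{-1}(U)$ satisfy $\#B\le\lambda$, and their pairwise overlaps inside $\varphi^{-1}(U)$ are $\kappa$-compact because $V\times_U V'$ is affine and $\Delta_{X/S}$ is $\kappa$-compact. The gap is in the final gluing step. As written, your datum records, for each affine open $U\subseteq S$ separately, an atlas of $\varphi^{-1}(U)$ with gluing on overlaps; nothing in it records how $\varphi^{-1}(U)$ and $\varphi^{-1}(U')$ are identified over $U\cap U'$, so the datum does not determine $X$ up to isomorphism and the claimed surjection onto isomorphism classes is not defined. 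If instead you intend a single global atlas consisting of all charts over all $U$, then the assertion that pairwise overlaps are $\kappa$-compact and hence covered by fewer than $\kappa$ bounded affines fails for charts lying over distinct affine opens of $S$: there $V\times_S V'$ sits over $U\cap U'$, which need not be quasi-compact, and $\kappa$-quasi-separatedness of $\varphi$ gives no control. Concretely, take $\kappa=\aleph_0$, $S$ the non-quasi-separated scheme obtained by gluing two copies of $\Spec k[x_1,x_2,\dots]$ along $\bigcup_i D(x_i)$, and $X=S$ with $\varphi=\id$; this is an object of $(\Sch/S)_{\aleph_0}$ whose two natural charts have non-quasi-compact intersection.

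The gap is repairable inside your framework, but it is not mere bookkeeping: one must either (a) enlarge the bound, noting that any open of $\Spec B$ with $\#B\le\lambda$ is covered by at most $\lambda$ basic affines, so cross-$U$ overlaps are $\lambda^+$-compact and the datum (now including transition data for inclusions $U''\subseteq U$ of affine opens of $S$, whose relevant overlaps $V''\cap V$ with $U''\subseteq U$ \emph{are} $\kappa$-compact since $V''\times_U V$ is affine) still forms a set; or (b) do what the paper does, namely first enlarge $\kappa$ so that $S$ itself is a $\kappa$-scheme, identify $(\Sch/S)_\kappa$ with $(\Sch/\Bbb Z)_\kappa/S$, and reduce to $S=\Spec\Bbb Z$. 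After that reduction every object is globally $\kappa$-compact and $\kappa$-quasi-separated over $\Bbb Z$, so a single atlas of fewer than $\kappa$ affines of $\kappa$-type with $\kappa$-compact pairwise overlaps exists, and the whole cross-$U$ issue disappears; the paper then realizes $X$ as a colimit of a diagram in a skeletally small category of affine $\kappa$-schemes. Your version trades that initial reduction for a genuinely harder gluing problem over a possibly enormous and non-quasi-separated $S$, and that problem is exactly the step your write-up elides.
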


\begin{proof}
Replacing $\kappa$ if necessary, we may assume that $S$ is a $\kappa$-scheme.
So $(\Sch/S)_\kappa$ is equivalent to $(\Sch/\Bbb Z)_\kappa/S$.
If a category $\Cal C$ is skeletally small and $c\in\Cal C$, then 
$\Cal C/c$ is skeletally small.
So We may assume that $S=\Spec \Bbb Z$.
Let $\Cal R$ be a complete set of representatives of isomorphism classes of
$\Bbb Z$-algebras of $\kappa$-type.
Note that $\Cal R$ can be a subset of the set of quotients of the
polynomial ring
$\Bbb Z[x_\alpha]_{\alpha\in\kappa}$, and certainly a small set.
Thus the category $\Cal A$ of affine $\kappa$-schemes is skeletally small.

Let $I_0=\kappa$ and $I_1=I_0\times I_0\times\kappa$.
Let $I$ be the small category with the object set $\Ob(I)=I_0\coprod I_1$,
and $\Hom(\bs i,\bs j)$ is a singleton if $\bs j=(j,j',k)\in I_1$ and $\bs i
\in \{j,j'\}\subset I_0$, and empty otherwise.
The category of $I\op$-diagrams of affine 
$\kappa$-schemes $\Cal B=\Func(I\op,\Cal A)$ is skeletally small.
Any $\kappa$-scheme can be expressed as 
$X=\bigcup_{\bs i\in I_0}U_i$ with $U_i\in \Cal A$.
Note that each $U_i\cap U_j$ is $\kappa$-compact, since $X$ is 
$\kappa$-quasi-separated.
So for each $(i,j)$, there is a covering $U_i\cap U_j=\bigcup_{k\in\kappa}U_{ijk}$
with $U_{ijk}\in\Cal A$.
So the collections determines an object $\Cal X=(((U_i),(U_{ikj})))$ 
of $\Cal B$, and $X$ is the colimit of $\Cal X$.
By the uniqueness of the colimit, the category of $\kappa$-schemes is
also skeletally small.
\end{proof}

\section{Module sheaves over a ringed site}
\label{sheaves.sec}

\paragraph\label{qc-invertible.par}
Let $\Cal C=(\Cal C,\Cal O_{\Cal C})$ be a ringed site.
That is, $\Cal C$ is a site (a category equipped with a pretopology)
and $\Cal O_{\Cal C}$ is a sheaf of commutative rings on $\Cal C$.

The category of $\O_{\Cal C}$-modules is denoted by $\Mod(\Cal C)$.
A free sheaf on $\Cal C$ is an $\O_{\Cal C}$-module which is isomorphic
to a direct sum of copies of the $\O_{\Cal C}$-module $\O_{\Cal C}$.
A sheaf of $\Cal O_{\Cal C}$-modules $\M$ is said to be quasi-coherent
(resp.\ invertible)
if for any $c\in\Cal C$, there exists some covering $(c_\lambda\rightarrow
c)$ such that for each $\lambda$, there is an exact sequence of sheaves of 
$\O_{\Cal C}|_{c_{\lambda}}$-modules of the form
\[
\Cal F_1 \rightarrow \Cal F_0 \rightarrow \M|_{c_\lambda}\rightarrow 0
\]
with $\Cal F_1$ and $\Cal F_0$ free
(resp.\ $\Cal F_1=0$ and $\Cal F_0=\O_{\Cal C}|_{c_\lambda}$).
Obviously, a quasi-coherent sheaf is invertible.
The category of quasi-coherent sheaves (resp.\ invertible sheaves) on 
$\Cal C$ is denoted by $\Qch(\Cal C)$ (resp.\ $\Inv(\Cal C)$).

\iffalse
\paragraph
An $\O_{\Cal C}$-module $\M$ is said to be of finite type if 
for any $c\in\Cal C$ there exists some covering
$(c_\lambda\rightarrow c)$ such that for each $\lambda$, there is an
exact sequence of the form
\[
\O_{\Cal C}^m|_{c_\lambda}\rightarrow \M|_{c_\lambda}\rightarrow 0.
\]
An $\Cal O_{\Cal C}$-module $\M$ is said to be coherent if it is
of finite type, and for any $c\in\Cal C$ and any map
$\theta:\O_{\Cal C}^m|_c\rightarrow \M|_c$, the kernel $\Ker\theta$ of
$\theta$ is of finite type as an $\O_{\Cal C}|_c$-module.
\fi

\paragraph
Assume that $\Cal C$ has an initial object $X$.
Then for an $\Gamma(X,\O_{\Cal C})$-module $M$, A presheaf $\tilde M^p$ is
defined by $\Gamma(c,\tilde M^p)=\Gamma(c,\O_{\Cal C})\otimes_{
\Gamma(X,\O_{\Cal C})}M$.
Its sheafification is denoted by $\tilde M$.

An $\O_{\Cal C}$-module of the form $\tilde M$ is quasi-coherent.

\paragraph\label{inverse-image.par}
Let $\Cal C$ and $\Cal D$ be sites and $f:\Cal D\rightarrow\Cal C$ a
functor.
In this paper, we say that $f$ is {\em continuous} if for any $d\in\Cal D$ and
any covering $(d_\lambda\rightarrow d)$, $(fd_\lambda\rightarrow fd)$ is
a covering again.
Let $f=(f,\eta):(\Cal C,\Cal O_{\Cal C})\rightarrow(\Cal D,\Cal O_{\Cal D})$ 
be a morphism of ringed sites.
That is, $f:\Cal D\rightarrow \Cal C$ is a continuous functor,
and $\eta:\Cal O_{\Cal D}\rightarrow f_*\Cal O_{\Cal C}$ is a map of 
sheaves of commutative rings.
Then almost by definition, if $\Cal M\in\Mod(\Cal D)$ is quasi-coherent
(resp.\  invertible), then so is $f^*\Cal M$.

\paragraph
Let $\Cal C$ be a site.
An $E_{\Cal C}$-morphism is a morphism that appears as a part of a covering.
A subset $B$ of $\Ob(\Cal C)$ is said to be a basis of the topology of
$\Cal C$ if for any $U\in\Ob(\Cal C)$ and any covering 
$(V_\lambda\rightarrow U)$, there is a refinement $(W_\mu\rightarrow U)$ of
$(V_\lambda\rightarrow U)$ such that each $W_\mu$ belongs to $B$.
An object $U$ of $\Cal C$ is said to be quasi-compact if
for any covering $(V_\lambda\rightarrow U)_{\lambda\in\Lambda}$ 
of $U$, there is a finite subset $\Lambda_0$ of $\Lambda$ such that
$(V_\lambda\rightarrow U)_{\Lambda_0}$ is also a covering of $U$.
An object $U$ of $\Cal C$ is said to be quasi-separated if
for any quasi-compact objects $V$, $W$ and any 
$E_{\Cal C}$-morphisms $V\rightarrow U$
and $W\rightarrow U$, $V\times_U W$ is again quasi-compact.
A subset $B$ of $\Ob(\Cal C)$ is said to be quasi-compact (resp.\ 
quasi-separated) if each element of $B$ is quasi-compact (resp.\ 
quasi-separated).

We say that $\Cal C$ is locally concentrated if $\Cal C$ has a
quasi-compact quasi-separated base of topology.

\paragraph
Let $\Cal C$ be a site with a basis of topology $B$.
Let $\Cal D$ be a full subcategory of $\Cal C$ such that $\Ob(\Cal D)$
consists of quasi-compact quasi-separated objects.
Assume that $\Ob(\Cal D)\supset B$.

We say that a presheaf of abelian groups 
$\M$ on $\Cal D$ is a $\Cal D$-sheaf, if 
for any $U\in\Cal D$, 
for any finite covering $(V_i\rightarrow U)$ of $U$ with $V_i\in B$, for any
coverings $(V_{ijk}\rightarrow V_i\times_U V_j)$ of $V_i\times_U V_j$ with
$V_{ijk}\in B$, the sequence
\[
0\rightarrow \Gamma(U,\M)\rightarrow \prod_i \Gamma(V_i,\M)
\rightarrow \prod_{i,j}\prod_k \Gamma(V_{ijk},\M)
\]
is exact.
Let $\Ps(\Cal D)$ and $\Ps(\Cal C)$ be the category of 
presheaves of abelian groups, and let $\Sh(\Cal C)$ be the category of 
sheaves of abelian groups.
Let $\Sh(\Cal D)$ be the category of $\Cal D$-sheaves.

Let $\iota:\Cal D\rightarrow \Cal C$ be the inclusion.
Then there is an obvious commutative diagram
\[
\xymatrix{
\Sh(\Cal C) \ar[r]^{q_{\Cal C}} \ar[d]^{\iota^\#_{\Sh}} &
\Ps(\Cal C) \ar[d]^{\iota^\#_{\Ps}} \\
\Sh(\Cal D) \ar[r]^{q_{\Cal D}} &
\Ps(\Cal D)
},
\]
where $q$ are the inclusions.

\begin{lemma}\label{sheaf-extension.thm}
Let the notation be as above.
Then there is a functor $\gamma:\Ps(\Cal D)\rightarrow \Sh(\Cal C)$ such that
the following are satisfied.
\begin{enumerate}
\item[\bf 1] $\gamma\iota^\#:\Ps(\Cal C)\rightarrow \Sh(\Cal C)$ 
is a sheafification, that is, left adjoint to $q$.
\item[\bf 2] $\iota^\#\gamma:\Ps(\Cal D)\rightarrow \Sh(\Cal D)$
is a sheafification, that is, left adjoint to $q$.
\end{enumerate}
Moreover, we have
\begin{enumerate}
\item[\bf 3] 
$\iota^\#_{\Sh}:\Sh(\Cal C)\rightarrow \Sh(\Cal D)$ is an equivalence
with $\gamma q$ its quasi-inverse.
\item[\bf 4] If $\Cal L\in\Ps(\Cal C)$ and $\iota^\#_{\Ps}(\Cal L)$ is
a $\Cal D$-sheaf, then the canonical map
$\Gamma(U,\Cal L)\rightarrow \Gamma(U,{}^a\Cal L)$ is an isomorphism
for $U\in\Cal D$.
\end{enumerate}
\end{lemma}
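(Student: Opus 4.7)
The plan is to exploit that $B\subset\Ob(\Cal D)$ generates the topology of $\Cal C$ and that every $U\in\Cal D$ is quasi-compact and quasi-separated, so that a \v{C}ech-type sheafification using only coverings by $B$ collapses. First I would check that the $\Cal D$-sheaf axiom as stated is equivalent to the usual sheaf axiom for the induced topology on $\Cal D$, in which a covering of $U\in\Cal D$ is any family $(V_\lambda\to U)$ with $V_\lambda\in\Ob(\Cal D)$ that covers $U$ in $\Cal C$. The non-trivial direction refines such a covering using: $U$ is quasi-compact, so finitely many $V_\lambda$ suffice; $B$ is a basis, so one may further assume $V_\lambda\in B$; and $U$ is quasi-separated with $V_i,V_j$ quasi-compact, so each $V_i\times_U V_j$ is quasi-compact and may be covered by finitely many $W_{ijk}\in B$. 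This identifies $\Sh(\Cal D)$ with the ordinary category of sheaves for the induced topology on $\Cal D$.

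Next I would construct $\gamma$ by a \v{C}ech-type formula. For $U\in\Cal C$ and a covering $\Cal V=(V_i\to U)$ with $V_i\in B$, together with coverings $(W_{ijk}\to V_i\times_U V_j)$ with $W_{ijk}\in B$, set
\[
\check H^0(\Cal V,\Cal M):=\Ker\Bigl(\prod_i\Cal M(V_i)\rightrightarrows\prod_{i,j,k}\Cal M(W_{ijk})\Bigr),
\]
and let $\gamma\Cal M(U)$ be the colimit of these groups over all such refinements. The basis property and the quasi-separatedness of objects of $\Cal D$ make this functorial in $U$ and $\Cal M$, and a standard local-to-global verification shows $\gamma\Cal M\in\Sh(\Cal C)$.

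For Claims {\bf 1} and {\bf 2}, I would check directly that for any $\Cal L\in\Ps(\Cal C)$ and $\Cal F\in\Sh(\Cal C)$
\[
\Hom_{\Sh(\Cal C)}(\gamma\iota^\#\Cal L,\Cal F)\simeq\Hom_{\Ps(\Cal C)}(\Cal L,q_{\Cal C}\Cal F),
\]
using that every covering of $\Cal C$ admits a refinement by $B$, so the Hom into a sheaf is determined by its restriction to $B$; the parallel computation on $\Cal D$ proves {\bf 2}. Claim {\bf 3} then follows formally: combining {\bf 1} and {\bf 2} with the fact that $\gamma q_{\Cal D}$ sends a sheaf $\Cal G\in\Sh(\Cal D)$ to a sheaf whose restriction to $\Cal D$ is $\Cal G$ again, the unit $\Cal F\to \gamma q_{\Cal D}\iota^\#_{\Sh}\Cal F$ and counit $\iota^\#_{\Sh}\gamma q_{\Cal D}\Cal G\to\Cal G$ are isomorphisms. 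Claim {\bf 4} is immediate from the \v{C}ech formula: if $\iota^\#_{\Ps}\Cal L$ is a $\Cal D$-sheaf, then for every cover $\Cal V$ by $B$ one already has $\check H^0(\Cal V,\Cal L)=\Gamma(U,\Cal L)$, so the colimit (and the second iteration needed to pass from a separated presheaf to a sheaf) is constant at $\Gamma(U,\Cal L)$.

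The main obstacle is the first step: the restricted $\Cal D$-sheaf axiom in the statement --- finite covers with $V_i\in B$ and double intersections covered by $W_{ijk}\in B$ --- must be shown to capture the sheaf condition for arbitrary coverings of $U\in\Cal D$ by objects of $\Cal C$. This is exactly where the quasi-compactness and quasi-separatedness of objects of $\Cal D$ together with the basis property of $B$ are essential; once it is in hand, the remaining assertions are formal consequences of standard sheafification theory and the comparison lemma.
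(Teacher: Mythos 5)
Your proposal matches the paper's own proof in its essentials: the paper defines $\gamma=\ccs^0\ccs^0$ as the twice-iterated version of exactly the \v{C}ech-kernel colimit you describe (taken over coverings by elements of $B$, with double ``intersections'' again covered by elements of $B$), and deduces {\bf 1}--{\bf 4} from this construction together with the quasi-compactness and quasi-separatedness of the objects of $\Cal D$, just as you do. The one caution is that a single application of your \v{C}ech formula produces in general only a separated presheaf rather than a sheaf, so $\gamma$ must be the double iteration --- a point you implicitly concede when discussing {\bf 4} but should fold back into your construction of $\gamma$.
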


\begin{proof}
Let $\M\in\Ps(\Cal D)$.

For each object $W$ of $\Cal C$, define $\cc^0(W,\M)$ to be the
inductive limit of 
\[
\Ker(\prod_{i\in I} \Gamma(V_i,\M)\rightarrow \prod_{ij}\prod_{k\in J_{ij}}
\Gamma(V_{ijk},\M)),
\]
where $(V_i\rightarrow W)_{i\in I}$ runs through the coverings of $W$ with
$V_i\in B$ (the index set $I$ may be infinite), and
$(V_{ijk}\rightarrow V_i\times_W V_j)$ runs through the coverings of 
$V_i\times_W V_j$ with $V_{ijk}\in B$.
It is easy to see that $\ccs^0(\M)=\cc^0(?,\M)$ is a separated 
presheaf of abelian groups on $\Cal C$, and $\ccs^0(\M)|_{\Cal D}$ is
$\M$.
Now as can be seen easily, $\N=\ccs^0(\ccs^0(\M))$ is a sheaf of abelian groups,
and $\N|_{\Cal D}=\M$.
This defines a functor $\gamma:=
\ccs^0\ccs^0:\Ps(\Cal D)\rightarrow \Sh(\Cal C)$.
By construction, {\bf 1} and {\bf 2} are satisfied.

The other assertions are proved easily.
\end{proof}

\begin{lemma}[cf.~{\cite[Theorem~8]{Kempf}}]\label{Kempf.thm}
Let $\Cal C$ be a locally concentrated site, and $U$ a 
quasi-compact quasi-separated object of $\Cal C$.
Let $\Cal F_\alpha$ be a filtered inductive system of abelian sheaves on
$\Cal C$.
Then the canonical map
\[
\indlim H^i(U,\Cal F_\alpha)\rightarrow H^i(U,\indlim \Cal F_\alpha)
\]
is an isomorphism.
\end{lemma}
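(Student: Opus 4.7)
The strategy is to compute $H^i(U, -)$ through \v{C}ech cohomology of hypercoverings by objects of the quasi-compact quasi-separated basis $B$, so that each \v{C}ech complex involves only finite products and the interchange with filtered colimits becomes formal.

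First, using Lemma~\ref{sheaf-extension.thm}, I replace $\Sh(\Cal C)$ by the equivalent category of $\Cal D$-sheaves, where $\Cal D \subset \Cal C$ is the full subcategory of quasi-compact quasi-separated objects (which one arranges to contain $B$ and $U$). Because each object of $\Cal D$ is quasi-separated, given any finite cover of $W\in\Cal D$ by elements $V_i\in B$, the fiber products $V_i \times_W V_j$ are quasi-compact and hence coverable by finitely many members of $B$. Thus the $\Cal D$-sheaf condition is a finite equalizer, and this settles the case $i = 0$: the functor $\Gamma(U, -)$ on $\Sh(\Cal D)$ is a finite limit, and finite limits commute with filtered colimits of abelian groups.

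For $i > 0$ I invoke (a site-theoretic version of) Verdier's hypercovering theorem: for any abelian sheaf $\Cal F$, one has $H^i(U, \Cal F) = \indlim_{\Cal U_\bullet} \check H^i(\Cal U_\bullet, \Cal F)$ as $\Cal U_\bullet$ ranges over a filtered category of hypercoverings of $U$. Iterating the argument of the previous paragraph, hypercoverings whose terms in each simplicial degree are finite disjoint unions of objects of $B$ are cofinal. For such a hypercovering the \v{C}ech complex is a complex of finite products of groups $\Gamma(V_{i_0 \cdots i_n}, \Cal F)$, so its cohomology commutes with filtered colimits. Interchanging the two filtered colimits yields
\[
H^i(U, \indlim_\alpha \Cal F_\alpha) = \indlim_{\Cal U_\bullet} \indlim_\alpha \check H^i(\Cal U_\bullet, \Cal F_\alpha) = \indlim_\alpha H^i(U, \Cal F_\alpha).
\]

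The main obstacle is the cofinality claim in the preceding paragraph. One must verify, inductively on simplicial degree, that an arbitrary hypercovering of $U$ admits a refinement whose boundary fiber products are quasi-compact and are then covered by finitely many elements of $B$. The crucial input is the quasi-separatedness built into the definition of a \emph{locally concentrated} site; without it, the relevant fiber products could fail to be quasi-compact and the \v{C}ech complex would involve infinite products, breaking the interchange of filtered colimits. An alternative route that avoids Verdier explicitly is to run the \v{C}ech-to-derived-functor spectral sequence together with dimension-shifting: one shows by induction that every cohomology class on $U$ is killed by a refinement along a finite covering by basis elements, using at each step that $\Gamma$ commutes with filtered colimits on objects of $\Cal D$.
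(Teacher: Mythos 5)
Your treatment of the case $i=0$ is the same as the paper's: pass to the presheaf colimit, observe that the finite sheaf condition of Lemma~\ref{sheaf-extension.thm} is preserved under filtered colimits because only finite products occur, and conclude via part {\bf 4} of that lemma. For $i>0$, however, you take a genuinely different route. The paper replaces each $\F_\alpha$ by an injective resolution $I^\bullet_\alpha$ and reduces everything to the single claim that a filtered colimit of sheaves that are $\Gamma(U,?)$-acyclic for all quasi-compact quasi-separated $U$ is again acyclic, which is then handled by the standard \v Cech/dimension-shifting induction of \cite[(2.12)]{Milne} --- essentially the ``alternative route'' you mention in your last sentence. Your primary route instead invokes Verdier's hypercovering theorem and a cofinality claim for hypercoverings whose terms are finite disjoint unions of basis elements. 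This is workable and arguably more conceptual, but it front-loads the hardest part: the cofinality verification requires, at each simplicial degree, that the relevant matching objects $(\cosk_n sk_n\Cal V)_{n+1}$ be quasi-compact, which forces you to track quasi-separatedness through iterated fiber products of basis elements (the definition of quasi-separatedness in the paper only directly controls two-fold fiber products over a single quasi-separated object, so composites and stability of $E$-morphisms under base change must be used to bootstrap). You correctly identify this as the main obstacle but do not carry it out, so as written the argument is a sketch at exactly the point where all the work lies; the paper's acyclic-resolution reduction avoids hypercoverings entirely and confines the inductive \v Cech argument to ordinary finite coverings, which is why it is the lighter path here. Either route, completed, proves the lemma.
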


\begin{proof}
First consider the case that $i=0$.
Let $L$ be the inductive limit of $(\F_\alpha)$ in the category of 
presheaves so that the left hand side is $\L(U)$.

Let $\Cal V=(V_i\rightarrow U)$ be any finite 
covering with $V_i$ quasi-compact quasi-separated,
and 
$(V_{ijk}\rightarrow V_i\times_U V_j)$ be finite coverings 
with $V_{ijk}$ quasi-compact quasi-separated.
Now
\[
0\rightarrow \F_\alpha(U)\rightarrow \prod_i \F_\alpha(V_i)\rightarrow
\prod_{i,j,k}\F_\alpha(V_{ijk})
\]
is exact, since each $F_\alpha$ is a sheaf.
Taking the inductive limit, 
\[
0\rightarrow \L(U)\rightarrow \prod_i \L(V_i)\rightarrow
\prod_{i,j,k}\L(V_{ijk})
\]
is also exact, since the inductive limit is compatible with any 
finite product.
Applying Lemma~\ref{sheaf-extension.thm}, {\bf 4} 
for the full subcategory $\Cal D$
of $\Cal C$ consisting of all the quasi-compact quasi-separated objects,
we have the case for $i=0$ immediately, as
the sheafification ${}^a\L$ of $\L$ is $\indlim\F_\alpha$.

Now consider the general case.
Let $I^\bullet_\alpha$ be the injective resolution of $F_\alpha$.
It suffices to show that $\indlim I^\bullet_\alpha$ is an 
$\Gamma(U,?)$-acyclic resolution of $\indlim F_\alpha$.
To show this, it suffices to show that if $(I_\alpha)$ is an inductive 
system of abelian sheaves which are 
$\Gamma(U,?)$-acyclic for any quasi-compact quasi-separated $U$, then 
$H^i(U,\indlim I_\alpha)=0$ for any quasi-compact quasi-separated 
$U$ and $i>0$.
This is checked as in \cite[(2.12)]{Milne}, and is left to the reader.
\end{proof}

\begin{corollary}\label{locally-tilde.thm}
Let $(\C,\O)$ be a locally concentrated ringed site.
Then for any quasi-coherent sheaf $\M$ over $\C$ and $c\in\C$, 
there exists some covering $(c_\lambda\rightarrow c)$ such that 
for each $\lambda$, $\M|_{c_\lambda}$ is isomorphic to $\tilde M_\lambda$ for
some $\Gamma(c_\lambda,\O)$-module $M_\lambda$.
\end{corollary}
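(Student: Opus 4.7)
The plan is to reduce to the case where $c$ is quasi-compact quasi-separated (QCQS), and then to produce a tilde-presentation of $\M|_c$ out of a free presentation. I would start from a covering $(c_\lambda\rightarrow c)$ together with exact sequences
\[
\F_1^\lambda\rightarrow\F_0^\lambda\rightarrow \M|_{c_\lambda}\rightarrow 0
\]
with the $\F_i^\lambda$ free, supplied by the definition of quasi-coherence, and refine it so that every $c_\lambda$ sits in the QCQS basis of the topology guaranteed by the locally concentrated hypothesis. Set $R_\lambda:=\Gamma(c_\lambda,\O)$.

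Two preliminary observations will do the bulk of the work. First, the functor $M\mapsto\tilde M$ from $R_\lambda$-modules to $\O|_{c_\lambda}$-modules is left adjoint to $\N\mapsto\Gamma(c_\lambda,\N)$, which I would verify formally from the tensor-product universal property and the sheafification adjunction; in particular $\tilde{(\cdot)}$ preserves cokernels. Second, any free sheaf $\bigoplus_J\O|_{c_\lambda}$ is canonically isomorphic to $\widetilde{R_\lambda^{(J)}}$, since both are sheafifications of the common presheaf $V\mapsto\Gamma(V,\O)^{(J)}$. So $\F_i^\lambda\cong\widetilde{R_\lambda^{(J_i)}}$ for suitable index sets $J_i$, and the question reduces to whether the given map $\F_1^\lambda\rightarrow\F_0^\lambda$ descends to an $R_\lambda$-linear $\alpha:R_\lambda^{(J_1)}\rightarrow R_\lambda^{(J_0)}$; granted this, $M_\lambda:=\Coker\alpha$ will satisfy
\[
\M|_{c_\lambda}=\Coker(\F_1^\lambda\rightarrow\F_0^\lambda)=\Coker\tilde\alpha=\widetilde{\Coker\alpha}=\tilde M_\lambda
\]
by the cokernel-preservation of tilde.

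The main obstacle will be establishing
\[
\Gamma(c_\lambda,\widetilde{R_\lambda^{(J_0)}})=R_\lambda^{(J_0)},
\]
after which the existence of $\alpha$ is immediate from the adjunction $\Hom(\widetilde{R_\lambda^{(J_1)}},\widetilde{R_\lambda^{(J_0)}})=\Hom_{R_\lambda}(R_\lambda^{(J_1)},\Gamma(c_\lambda,\widetilde{R_\lambda^{(J_0)}}))$. To prove it I would let $\Cal D\subset\C/c_\lambda$ be the full subcategory of QCQS objects and write the presheaf $V\mapsto\Gamma(V,\O)^{(J_0)}$ as the filtered colimit $\indlim_{J_0^{\mathrm{fin}}}\Gamma(V,\O)^{J_0^{\mathrm{fin}}}$ of the sheaves $V\mapsto\Gamma(V,\O)^{J_0^{\mathrm{fin}}}$ indexed by finite subsets $J_0^{\mathrm{fin}}\subset J_0$. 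The $i=0$ case of Lemma~\ref{Kempf.thm} then forces this colimit to remain a sheaf when restricted to $\Cal D$, and Lemma~\ref{sheaf-extension.thm}, {\bf 4} ensures that sheafification does not alter the sections over $c_\lambda\in\Cal D$, yielding the identity. Without this QCQS-plus-Kempf input, sheafification of the presheaf direct sum could strictly enlarge the module of sections, and the correspondence between sheaf maps of free sheaves and $R_\lambda$-linear maps of free modules would fail.
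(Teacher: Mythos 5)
Your proof is correct and follows essentially the same route as the paper's: refine to a covering by quasi-compact quasi-separated objects, use Lemma~\ref{Kempf.thm} to identify sections of infinite free sheaves over such objects with free modules, and conclude by right-exactness of $M\mapsto\tilde M$. The paper merely packages this differently, defining $M_\lambda$ as the cokernel of $\Gamma(c_\lambda,d_\lambda)$ and checking that the comparison maps $\tilde F_i\rightarrow\Cal F_{\lambda,i}$ are isomorphisms, which amounts to the same Kempf computation you carry out.
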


\begin{proof}
There is a covering $(c_\lambda\rightarrow c)$ such that for each $\lambda$, 
there exists some exact sequence of the form
\[
\Cal F_{\lambda,1}\xrightarrow{d_\lambda}\Cal F_{\lambda,0}
\rightarrow\Cal M|_{c_\lambda}\rightarrow 0
\]
with $\Cal F_{\lambda,1}$ and $\Cal F_{\lambda,0}$ free.
We may assume that each $c_\lambda$ is quasi-compact quasi-separated.

Let $M_\lambda$ be the cokernel of the map
\[
F_1:=\Gamma(c_\lambda,\Cal F_{\lambda,1})\xrightarrow{d_\lambda}
\Gamma(c_\lambda,\Cal F_{\lambda,0})=:F_0.
\]
Then there is a commutative diagram
\[
\xymatrix{
\tilde F_1 \ar[d]^{h_1} \ar[r]^{d_\lambda} &
\tilde F_0 \ar[d]^{h_0} \ar[r] &
\tilde M \ar[d]^h \ar[r] & 0 \\
\Cal F_{\lambda,1} \ar[r]^{d_\lambda} &
\Cal F_{\lambda,0} \ar[r] &
\Cal M|_{c_\lambda} \ar[r] & 0
},
\]
where $h_1$, $h_0$ and $h$ are canonical maps.
Thus it suffices to show that $h_1$ and $h_0$ are isomorphisms.
This is proved easily,
using Lemma~\ref{Kempf.thm}.
\end{proof}

\section{Grothendieck's descent}
\label{gro.sec}

\paragraph\label{cond-C.par}
Let $\Bbb C$ be a class of morphisms of schemes.
We assume that
\begin{enumerate}
\item[\bf 1] All isomorphisms are in $\Bbb C$.
\item[\bf 2] If $f:X\rightarrow Y$ and $g:Y\rightarrow Z$ are morphisms
and $g\in\Bbb C$, then $f\in\Bbb C$ if and only if $gf\in\Bbb C$.
\item[\bf 3] If $f:X\rightarrow Y$ is in $\Bbb C$ and $Y'\rightarrow Y$ 
any morphism of schemes, then the base change $f':X'=Y'\times_Y X\rightarrow Y'
$ of $f$ is again in $\Bbb C$.
\end{enumerate}
Sometimes we identify $\Bbb C$ as a subcategory of $\Sch$ such that
$\Ob(\Bbb C)=\Ob(\Sch)$ and $\Mor(\Bbb C)=\Bbb C$.
When we take the fiber product 
of two morphisms with the same codomain in $\Bbb C$ 
in the category $\Sch$, it is also a fiber product in $\Bbb C$.
For a scheme $T$, the category $\Bbb C/T$ is a full subcategory of $\Sch/T$.
Further, we assume that 
\begin{enumerate}
\item[\bf 4] $\Bbb C/T$ is skeletally small for any scheme $T$.
\end{enumerate}

\paragraph\label{cond-E.par}
Let $E$ be a Grothendieck pretopology on 
$\Sch/S$ such as Zariski, \'etale, fppf, and fpqc.
An $E$-morphism is a morphism in $\Sch/S$ which appear as a morphism 
of some covering 
(such as an open immersion, \'etale map,
flat map locally of finite presentation, and flat map).
We assume that an $E$-morphism is in $\Bbb C$.
We also assume that the presheaf $\O$ of rings 
defined by $\O(X)=\Gamma(X,\O_X)$ 
is a sheaf with respect to $E$.

\paragraph \label{diagram.par}
Let $I$ be a small category, and
$X_\bullet$ an $I\op$-diagram of $S$-schemes.

We define $(\Bbb C/X_\bullet)_E$ to be the ringed site defined as follows.
An object of $(\Bbb C/X_\bullet)_E$ is a pair $(i,f:U\rightarrow X_i)$ 
such that $i\in \Ob(I)$ and $f\in\Bbb C$.
A morphism from $(j,f:U\rightarrow X_j)$ to $(i,g:V\rightarrow X_i)$ is
a pair $(\phi,h)$ such that $\phi:i\rightarrow j$ is a morphism in
$I$, and $h:U\rightarrow V$ is a morphism such that $gh= X_\phi f$.
The composition is given by $(\phi',h')\circ(\phi,h)=(\phi\phi',h'h)$.
Thus $(\Bbb C/X_\bullet)_E$ is a skeletally small category.
A family $((\phi_\lambda,h_\lambda):(i_\lambda,U_\lambda)\rightarrow (i,U))$ 
with the same codomain $(i,U)$ is a covering if $i_\lambda=i$,
$\phi_\lambda=\id_\lambda$, and the collection
$(h_\lambda:U_\lambda\rightarrow U)$ is a covering of $U$ in $E$.
The structure sheaf $\O$ is defined in an appropriate way.

\paragraph
The first example is the case that $\Bbb C=(\Zar)$ 
is the class of open immersions
and $E=\Zar$ is the Zariski topology.
The resulting site $((\Zar)/X_\bullet)_{\Zar}$ is called the small Zariski site
and is denoted by $\Zar(X_\bullet)$.
We denote $\Mod(\Zar(X_\bullet))$, $\Qch(\Zar(X_\bullet))$, 
and $\Inv(\Zar(X_\bullet))$ by $\Mod(X_\bullet)$,
$\Qch(X_\bullet)$, and $\Inv((X_\bullet)$, respectively.

\paragraph
Let $\kappa$ be a regular cardinal.
Let $\Bbb C=(\km)$ be the class of all $\kappa$-morphisms.
The resulting site 
$((\km)/X_\bullet)_{\Zar}$ is called the 
$\kappa$-big Zariski site,
and is denoted by $\Zar^b(X_\bullet)=\Zar^b_\kappa(X_\bullet)$.

We denote
$\Mod(\Zar^b_\kappa(X_\bullet))$, $\Qch(\Zar^b_\kappa(X_\bullet))$, 
and $\Inv(\Zar^b_\kappa(X_\bullet))$ by $\Mod^b_\kappa X_\bullet)$,
$\Qch_\kappa^b(X_\bullet)$, and $\Inv_\kappa^b(X_\bullet)$, respectively.
The suffix $\kappa$ will be omitted if there is no danger of confusion.

\paragraph\label{Zar-description.par}
There is an obvious inclusion $\mu:\Zar(X_\bullet)\rightarrow
\Zar(X_\bullet)_\kappa^{b}$.
It gives a morphism of ringed sites
$\mu:\Zar(X_\bullet)_\kappa^b\rightarrow \Zar(X_\bullet)$.
It is easy to see that for each $\M\in\Mod(\Zar(X_\bullet))$ and 
a $\kappa$-morphism $(i,f:U\rightarrow X_i)$, $\Gamma((i,f),
\mu^*(\M))=\Gamma(U,f^*(\M_i))$.

\paragraph
Let $E=\Fpqc$ (resp.\ $\Qfpqc$)
be the pretopology whose covering $(V_\lambda\rightarrow U)$ is
a family of $S$-morphisms such that
the induced morphism $\coprod_\lambda V_\lambda\rightarrow U$ is fpqc
(resp.\ qfpqc).
Almost by definition, $\Qfpqc$ is a saturation of $\Fpqc$.
In what follows, we do not treat $\Qfpqc$, but implicitly use the fact that a 
sheaf for $\Fpqc$ is also a sheaf for $\Qfpqc$
\cite[(2.48)]{Vistoli}.

We denote the site $((\km)/X_\bullet)_{\Fpqc}$
by $\Fpqc(X_\bullet)=\Fpqc_\kappa(X_\bullet)$.
By 
\cite[(2.60), (4.22)]{Vistoli}, the sheaf $\Cal O$
of commutative rings is a sheaf on $\Fpqc(X_\bullet)$,  and thus
$\Fpqc(X_\bullet)$ is a ringed site.
We denote $\Mod(\Fpqc(X_\bullet))$, $\Qch(\Fpqc(X_\bullet))$, and 
$\Inv(\Fpqc(X_\bullet))$ by 
$\Mod_{\Fpqc}(X_\bullet)$, $\Qch_{\Fpqc}(X_\bullet)$,
and $\Inv_{\Fpqc}(X_\bullet)$, respectively.

We have a morphism of ringed sites
$\nu:\Fpqc(X_\bullet) \rightarrow\Zar^b(X_\bullet)$.

\paragraph
Note that $(i,f:U\rightarrow X_i)$ with $U$ affine forms a quasi-compact
quasi-separated basis of topology of $\Zar(X_\bullet)$.
Similarly for $\Zar^b(X_\bullet)$ and $\Fpqc(X_\bullet)$.
Thus these sites are locally concentrated.

\paragraph
Considering the case that $I=1$ (the discrete 
category with one object), we have $\Zar(X)$, 
$\Zar^b(X)$, and $\Fpqc(X)$ for a single scheme $X$.

\paragraph
Until the end of this section, let $\Bbb C$ and $E$ be as in
(\ref{cond-C.par}) and (\ref{cond-E.par}).
Let $I$ and $X_\bullet$ be as in (\ref{diagram.par}).

\begin{lemma}\label{restriction.thm}
The restriction functor $(?)_J:\Mod((\Bbb C/X_\bullet)_E)\rightarrow 
\Mod((\Bbb C/X_\bullet)_E)$ has both a left adjoint and a right adjoint.
In particular, $(?)_J$ preserves arbitrary limits and colimits.
In particular, $(?)_J$ is exact.
\end{lemma}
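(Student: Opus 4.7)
The plan is to realize $(?)_J$ as the pullback along a continuous and cocontinuous inclusion of ringed sites, so that both adjoints arise from standard site-theoretic machinery (cf.\ SGA4, Exp.~III). Interpreting $J$ as a subcategory of $I$ and writing $X_J$ for the corresponding restricted diagram (so the target should be read as $\Mod((\Bbb C/X_J)_E)$), the first step would be to construct the evident fully faithful inclusion $\iota_J:(\Bbb C/X_J)_E\hookrightarrow (\Bbb C/X_\bullet)_E$ sending $(j,f:U\to X_j)$ to itself, now viewed as an object over $X_\bullet$.

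Next I would verify that $\iota_J$ is continuous (a covering $((j,U_\lambda)\to (j,U))$ in the small site is tautologically a covering in the big site) and cocontinuous (every covering of $(j,U)$ in the big site has all components at the same index $j$, by the definition of coverings in (\ref{diagram.par}), and hence already lies in the image of $\iota_J$). The structure sheaves also match under restriction: $\iota_J^{-1}\O_{(\Bbb C/X_\bullet)_E}=\O_{(\Bbb C/X_J)_E}$, since both are defined object-wise by $(i,U)\mapsto \Gamma(U,\O_U)$.

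Because $\iota_J$ is both continuous and cocontinuous, the general theory of morphisms of sites supplies on abelian sheaves a triple of adjoints $\iota_{J,\#}\dashv \iota_J^{-1}\dashv \iota_{J,*}$, where $\iota_J^{-1}$ is precisely the restriction $(?)_J$. Here $\iota_{J,\#}$ is left Kan extension along $\iota_J$ followed by sheafification, and $\iota_{J,*}$ is the direct image (essentially a right Kan extension). I would then upgrade the triple from abelian sheaves to $\O$-modules by noting that both Kan extensions are (co)limits of abelian groups that automatically inherit module structures from the structure-sheaf compatibility above, and that sheafification is exact and preserves module structure; thus the full triple descends to $\Mod$.

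The last sentences of the statement are then formal: a functor admitting both a left and a right adjoint preserves every limit (as a right adjoint) and every colimit (as a left adjoint), hence preserves kernels and cokernels in the abelian category $\Mod$ and is therefore exact. The hard part, such as it is, is the bookkeeping in the module promotion of $\iota_{J,\#}$, i.e.\ checking that the left-Kan-extension-plus-sheafification construction on underlying abelian sheaves is compatible with the structure-sheaf action; the direct image $\iota_{J,*}$ is transparent. Once these routine but fiddly checks are confirmed, everything else is immediate from the adjoint-functor formalism.
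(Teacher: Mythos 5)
Your overall strategy is the same as the paper's: the paper disposes of the lemma in two lines by citing Kan's lemma (Artin, Theorem~I.2.1) for the left adjoint and \cite[(2.31)]{ETI} for the right adjoint, and your continuous-plus-cocontinuous reading of the inclusion $\iota_J$ is just the SGA4 packaging of the same Kan-extension machinery. Your verification of continuity and cocontinuity is correct: by the definition of coverings in (\ref{diagram.par}) every covering of $(j,U)$ lives entirely at the index $j$, so both conditions are immediate. (One cosmetic point: if $J$ is not a \emph{full} subcategory of $I$, the inclusion $\iota_J$ need not be fully faithful, but nothing in the argument uses fullness.)

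There is, however, one claim in your write-up that is false as stated, precisely at the place you yourself flag as ``the hard part'': the left Kan extension of the underlying abelian presheaf does \emph{not} automatically inherit an $\O$-module structure. At an object $c=(i,g:V\rightarrow X_i)$ the left Kan extension is a colimit of the groups $\N(d)$ over the comma category of morphisms $c\rightarrow \iota_J(d)$ with $d=(j,f:U\rightarrow X_j)$; each $\N(d)$ is a $\Gamma(U,\O)$-module, and the morphism $c\rightarrow\iota_J(d)$ provides a ring map $\Gamma(U,\O)\rightarrow\Gamma(V,\O)$ --- which points the wrong way to induce a $\Gamma(V,\O)$-action on $\N(d)$. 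So the module-level left adjoint is not the sheafified abelian left Kan extension with a module structure grafted on; it must be built by inserting a base change, e.g.\ as the sheafification of $c\mapsto\varinjlim\bigl(\Gamma(c,\O)\otimes_{\Gamma(d,\O)}\N(d)\bigr)$ (equivalently, $\O\otimes_{\iota_{J\#}\O_J}\iota_{J\#}(-)$). This is standard and easily repaired --- and the existence of a left adjoint also follows abstractly since $\Mod((\Bbb C/X_\bullet)_E)$ is a Grothendieck category and $(?)_J$, being computed objectwise, preserves all limits --- but the sentence asserting that ``both Kan extensions \dots automatically inherit module structures'' needs this correction. Your treatment of the right adjoint (where the ring maps do point the right way, so the limit is naturally a $\Gamma(c,\O)$-module) and the final formal deduction of exactness are fine.
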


\begin{proof}
The left adjoint exists by Kan's lemma (see e.g., 
\cite[Theorem~I.2.1]{Artin}.
The existence of the right adjoint is by \cite[(2.31)]{ETI}.
\end{proof}

\begin{lemma}\label{inverse-image-theta.thm}
Let $I$ be a small category, and $f_\bullet:X_\bullet\rightarrow Y_\bullet$ be
a morphism of $I\op$-diagrams of schemes.
Let $J$ be a subcategory of $I$.
Then Lipman's theta {\rm\cite[(3.7.2)]{Lipman}}, {\rm\cite[(1.21)]{ETI}}
\[
\theta:(f_\bullet)_J^*(?)_J\rightarrow (?)_Jf_\bullet^*
\]
is an isomorphism of functors $\Mod((\Bbb C/Y_\bullet)_E)
\rightarrow \Mod((\Bbb C/X_\bullet|_J)_E)$.
\end{lemma}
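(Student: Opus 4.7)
The plan is to verify that $\theta$ is an isomorphism by reducing to a vertex-by-vertex check on the diagram $X_\bullet$.

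A sheaf of modules on $(\Bbb C/X_\bullet)_E$ is equivalently described by its vertex components $\M_i:=\M|_{(\Bbb C/X_i)_E}$ for $i\in I$ together with comparison maps induced by the morphisms of $I$. Applying Lemma~\ref{restriction.thm} to the one-object subcategory $\{i\}\subset I$, vertex restriction is exact, admits both adjoints, and is conservative: a morphism of sheaves on $(\Bbb C/X_\bullet)_E$ is an isomorphism if and only if all its vertex restrictions are isomorphisms. Hence it suffices to check that $\theta_i$ is an isomorphism for each $i\in J$, and that the comparison maps associated to morphisms of $J$ agree on both sides of $\theta$.

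For the vertex check, I would use the locality of inverse image: there is a canonical isomorphism $(f_\bullet^*\M)_i\cong f_i^*\M_i$ for each $i$, and similarly $((f_\bullet)_J^*\N)_i\cong f_i^*\N_i$ for $\N\in\Mod((\Bbb C/Y_\bullet|_J)_E)$ and $i\in J$. Taking $\N=(?)_J\M$, for $i\in J$ we have $((?)_J\M)_i=\M_i$. Under these identifications, both $((f_\bullet)_J^*(?)_J\M)_i$ and $((?)_J f_\bullet^*\M)_i$ are canonically isomorphic to $f_i^*\M_i$, and $\theta_i$ corresponds to the identity. Agreement of the comparison maps for morphisms $\phi:j\to i$ in $J$ is then a short diagram chase using naturality of $f_\phi^*$ together with the fact that $\theta_i$ is the identity at each vertex of $J$.

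The main obstacle is establishing the locality formula $(f_\bullet^*\M)_i\cong f_i^*\M_i$, which is itself the special case of the lemma for the inclusion $\{i\}\hookrightarrow I$. This reduction is not circular: for the one-vertex subcategory, vertex restriction is the pushforward along a particularly simple inclusion of ringed sites, whose left adjoint has a transparent description, and the identification $(f_\bullet^*\M)_i=f_i^*\M_i$ reduces to the compatibility of sheafification and $\otimes_\O$ with vertex restriction. Here the exactness and adjoint properties provided by Lemma~\ref{restriction.thm} do the real work: vertex restriction preserves both limits and colimits, so it commutes with sheafification and with the tensor product used in the definition of module inverse image, and the strict commutativity $F\circ\iota_Y=\iota_X\circ F_J$ of the underlying base-change site functors then yields the isomorphism.
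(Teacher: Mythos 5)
The paper itself gives no written argument here (it defers to \cite[(6.25)]{ETI}), so the comparison is against what any complete proof must contain. Your reduction to single vertices is sound: the restrictions $(?)_{\{i\}}$, $i\in J$, are jointly conservative because every object of $(\Bbb C/X_\bullet|_J)_E$ lies over some $i\in J$, and the compatibility of $\theta$ with composing the squares for $\{i\}\subset J\subset I$ reduces everything to $J=\{i\}$. (Conservativity is an easy direct observation, not something contained in Lemma~\ref{restriction.thm}, and the "agreement of comparison maps" check is vacuous since $\theta$ is already a morphism in $\Mod((\Bbb C/X_\bullet|_J)_E)$; these are harmless.)

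The gap is in the singleton case, which is the entire content of the lemma. You correctly dispose of the sheafification and $\otimes_{\O}$ stages (restriction to the subsite commutes with the plus construction because coverings of an object over $i$ live entirely over $i$), but for the presheaf inverse image your justification is ``the strict commutativity $F\circ\iota_Y=\iota_X\circ F_J$ of the underlying site functors then yields the isomorphism,'' and this is exactly what does \emph{not} follow formally. Strict commutativity of the square of site functors gives the strict identity $(f_{\bullet}|_J)_*(?)_J=(?)_J(f_\bullet)_*$ on pushforwards, hence by adjunction a canonical isomorphism between $f_\bullet^*\circ L_J$ and $L_J\circ(f_\bullet|_J)^*$ where $L_J$ is the \emph{left} adjoint of restriction --- but $\theta$ involves restriction itself, and a commuting square of functors does not make restrictions commute with left Kan extensions (this is a Beck--Chevalley condition). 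Concretely, $\Gamma\bigl((j,U\to X_j),f_\bullet^{-1}\M\bigr)$ is a colimit $\indlim\Gamma((i,V\to Y_i),\M)$ over a comma category in which $i$ ranges over all of $I$ with a morphism $\phi\colon i\to j$, while the left-hand side of $\theta$ uses only the sub–comma category with $i=j$, $\phi=\id$. The missing step is to show this subcategory is cofinal: given $(i,V\to Y_i)$ with $\phi\colon i\to j$ and $U\to V\times_{Y_i}X_i$, one passes to $(j,\,V\times_{Y_i}Y_j\to Y_j)$, which lies in $\Bbb C/Y_j$ by the base-change axiom {\bf 3} of (\ref{cond-C.par}) and maps to $(i,V)$ in the comma category compatibly with $U$. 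Without this cofinality argument (which is where the hypotheses on $\Bbb C$ actually enter), the vertex computation $(f_\bullet^*\M)_i\cong f_i^*\M_i$ is asserted rather than proved.
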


\begin{proof}
This is proved similarly to \cite[(6.25)]{ETI}.
\end{proof}

\begin{lemma}\label{inverse-image.thm}
Let $I$ be a small category, and $f_\bullet:X_\bullet\rightarrow Y_\bullet$ be
a morphism of $I\op$-diagrams of schemes.
If $f_i\in\Bbb C$ for any $i\in I$, then $f_\bullet^*:\Mod((\Bbb C/Y_\bullet)_E)
\rightarrow \Mod((\Bbb C/X_\bullet)_E)$ is nothing but the restriction.
That is, 
for $\M\in\Mod((\Bbb C/Y_\bullet)_E)$ and $(i,g:U\rightarrow X_i)
\in \Bbb C/X_\bullet$,
$\Gamma((i,g),f_\bullet^*\M)=\Gamma((i,f_ig),\M)$.
In particular, $f^*_\bullet$ is an exact functor.
\end{lemma}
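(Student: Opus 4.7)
The plan is to realize $f_\bullet^*$ explicitly as precomposition along a functor in the reverse direction. Under the hypothesis $f_i\in\Bbb C$ for every $i\in I$, I would define a functor $\tilde f:\Bbb C/X_\bullet\to \Bbb C/Y_\bullet$ by $\tilde f(i,g:U\to X_i)=(i,f_ig:U\to Y_i)$; the composite $f_ig$ lies in $\Bbb C$ by condition {\bf 2} of (\ref{cond-C.par}), and a morphism $(\phi,h)$ in $\Bbb C/X_\bullet$ is sent to the same pair $(\phi,h)$, well-defined by the compatibility $f_jX_\phi=Y_\phi f_i$. The functor $\tilde f$ is continuous, since a covering in $(\Bbb C/X_\bullet)_E$ has identity $I$-component and an underlying $E$-covering, both of which $\tilde f$ preserves on the nose. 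The content of the lemma is precisely that $f_\bullet^*=\tilde f^*$, where $\tilde f^*$ denotes precomposition along $\tilde f$.

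The key claim is that $\tilde f$ is left adjoint to the continuous functor $\Phi:\Bbb C/Y_\bullet\to \Bbb C/X_\bullet$ underlying the morphism of ringed sites $f_\bullet$, given by the pullback $\Phi(j,k:V\to Y_j)=(j,\mathrm{pr}_2:V\times_{Y_j}X_j\to X_j)$. Unwinding the definitions, a morphism $(\psi,h):\tilde f(i,g)\to (j,k)$ in $\Bbb C/Y_\bullet$ is the data of $\psi:j\to i$ in $I$ and $h:U\to V$ with $kh=Y_\psi f_ig$, whereas a morphism $(\psi,\tilde h):(i,g)\to\Phi(j,k)$ in $\Bbb C/X_\bullet$ is the data of $\psi$ and $\tilde h:U\to V\times_{Y_j}X_j$ with $\mathrm{pr}_2\tilde h=X_\psi g$; these correspond bijectively by the universal property of the fiber product, using $f_jX_\psi=Y_\psi f_i$.

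With the adjunction in hand, the presheaf inverse image $f_\bullet^{-1}\N$, computed as the left Kan extension along $\Phi\op$, is given by $(f_\bullet^{-1}\N)(i,g)=\varinjlim\N(j,k)$ over the comma category $((i,g)\downarrow\Phi)$; by $\tilde f\dashv\Phi$ this comma category identifies with the coslice $(\tilde f(i,g)\downarrow\id)$, which has the initial object $(\tilde f(i,g),\id)$, so the colimit collapses to $\N(\tilde f(i,g))=\N(i,f_ig)$. Continuity of $\tilde f$ implies $\tilde f^*\N$ is already a sheaf, and a direct evaluation gives $\tilde f^*\O_{Y_\bullet}=\O_{X_\bullet}$ (both send $(i,g:U\to X_i)$ to $\Gamma(U,\O_U)$ by the $E$-sheaf assumption of (\ref{cond-E.par})), so the tensor product in the definition of $f_\bullet^*\M$ trivializes and yields $f_\bullet^*\M=\tilde f^*\M$. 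Exactness is then immediate, since precomposition with a functor preserves all limits and colimits. The main obstacle is verifying the adjunction $\tilde f\dashv\Phi$; everything else is formal manipulation with Kan extensions.
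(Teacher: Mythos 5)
Your proof is correct, and its engine is the same as the paper's: because $f_i g\in\Bbb C$ (via condition {\bf 2} of (\ref{cond-C.par})), the comma category indexing the left Kan extension that computes the presheaf inverse image acquires an initial object, so the colimit collapses to $\Gamma((i,f_ig),\M)$, and the result is already a sheaf, so no sheafification intervenes. The packaging differs in two ways. First, the paper reduces to $I=1$ by invoking Lemma~\ref{inverse-image-theta.thm} and then computes the colimit for a single morphism $f:X\to Y$ directly, observing that the index category of pairs $(V,h)$ has $(U,1_U)$ as its distinguished object; you instead stay at the level of the full diagram sites and organize the collapse through the explicit adjunction $\tilde f\dashv\Phi$ between the composition functor and the base-change functor underlying $f_\bullet$. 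These are literally two phrasings of one fact -- a functor admits a left adjoint exactly when each comma category under an object has an initial object -- but your version is self-contained (no appeal to Lipman's theta) and makes the identity $f_\bullet^*=\tilde f^{\,p}$ conceptually transparent, at the cost of having to verify the adjunction and the identities $\tilde f^{\,p}\O_{Y_\bullet}=\O_{X_\bullet}$ by hand. Two small points to tidy: with the paper's conventions ($\phi:i\to j$ gives $X_\phi:X_j\to X_i$) the naturality square reads $f_iX_\phi=Y_\phi f_j$ rather than as you wrote it; and since the paper's notion of continuity only demands preservation of coverings, you should note explicitly that $\tilde f$ also preserves the fiber products $U_\lambda\times_U U_\mu$ appearing in the sheaf condition (it does, on the nose, since these are computed in schemes independently of whether $U$ is viewed over $X_i$ or $Y_i$), which is what makes $\tilde f^{\,p}$ send sheaves to sheaves.
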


\begin{proof} By Lemma~\ref{inverse-image-theta.thm} applied to
$J=\{i\}$, we may 
assume that $I=1$, and the problem is on a morphism $f:X\rightarrow Y$
of single schemes which lies in $\Bbb C$.
Then the inverse image of the presheaf module $(f^*)^p\M$ 
is given by 
$\Gamma(U,(f^*)^p\M)=\indlim \Gamma(U,\O_X)\otimes_{\Gamma(V,\O_Y)}\Gamma(V,\M)$,
where the colimit is taken over pairs $(V,h)$
with $V\in \Bbb C/Y$ and $h\in \Mor(U,V)$.
But as $f\in\Bbb C$, the colimit is taken over a category with the 
final object $(U,1_U)$, and hence $\Gamma(U,(f^*)^p\M)=\Gamma(U,\M)$.
So $(f^*)^p\M$ is already a sheaf, and the assertion follows.
\end{proof}

\begin{lemma}\label{theta.thm}
Let $I$ be a small category, and 
\[
\xymatrix{
X_\bullet' \ar[d]^{f_\bullet'} \ar[r]^{g^X_\bullet} &
X_\bullet \ar[d]^{f_\bullet} \\
Y_\bullet' \ar[r]^{g^Y_\bullet} & Y_\bullet
}
\]
be a cartesian square of $I\op$-diagrams of $S$-schemes.
Assume that $(g^Y_\bullet)_i\in\Bbb C$ for any $i\in I$.
Then Lipman's theta 
\[
\theta: (g^Y_\bullet)^*(f_\bullet)_*\rightarrow (f'_\bullet)_*(g^X_\bullet)^*
\]
between the functors $\Mod((\Bbb C/X_\bullet)_E)\rightarrow 
\Mod((\Bbb C/Y_\bullet')_E)$ is an isomorphism.
\end{lemma}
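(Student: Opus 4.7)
The plan is to reduce to the single-scheme case by checking $\theta$ componentwise at each $i \in I$, and then verify the single-scheme case by direct computation from Lemma~\ref{inverse-image.thm}. Since an isomorphism in $\Mod((\Bbb C/Y'_\bullet)_E)$ is tested componentwise, it suffices to show that $(\theta)_i$ is an isomorphism for every $i$. Lemma~\ref{inverse-image-theta.thm} (applied with $J = \{i\}$) gives $((g^Y_\bullet)^*\L)_i = (g^Y_i)^*\L_i$, and similarly for $g^X_\bullet$, while the analogous identification $((f_\bullet)_*\N)_i = (f_i)_*\N_i$ is a formal consequence of the componentwise adjunctions $f_i^* \dashv (f_i)_*$: a morphism $f_\bullet^*\L \to \N$ is a compatible family of morphisms $f_i^*\L_i \to \N_i$, which corresponds to a compatible family $\L_i \to (f_i)_*\N_i$. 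Consequently $(\theta)_i$ is Lipman's theta for the cartesian square of single schemes at level $i$, with $g^Y_i \in \Bbb C$ by hypothesis and $g^X_i \in \Bbb C$ as a base change of $g^Y_i$ along $f_i$ (condition \textbf{3} of (\ref{cond-C.par})).

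For the single-scheme case, Lemma~\ref{inverse-image.thm} renders the computation tautological: it identifies $(g^Y_i)^*$ and $(g^X_i)^*$ with the restriction functors reindexing an object by composition with $g^Y_i$ or $g^X_i$. Thus for any $(U \to Y'_i) \in \Bbb C/Y'_i$ and any $\N$,
\[
\Gamma(U \to Y'_i,\, (g^Y_i)^*(f_i)_*\N) = \Gamma(U \to Y_i,\, (f_i)_*\N) = \Gamma(U \times_{Y_i} X_i \to X_i,\, \N),
\]
the last equality being the standard big-site description $f_*\N(V) = \N(V \times_Y X)$. On the other hand, the cartesian property gives $U \times_{Y'_i} X'_i = U \times_{Y_i} X_i$, whence
\[
\Gamma(U \to Y'_i,\, (f'_i)_*(g^X_i)^*\N) = \Gamma(U \times_{Y_i} X_i \to X'_i,\, (g^X_i)^*\N) = \Gamma(U \times_{Y_i} X_i \to X_i,\, \N),
\]
again by Lemma~\ref{inverse-image.thm} applied to $g^X_i$. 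Both sides therefore yield the same group, and unwinding the units and counits in the construction of $\theta$ shows $(\theta)_i$ induces this canonical identification.

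The main obstacle is the last point: verifying that the abstractly defined $\theta$ of \cite[(3.7.2)]{Lipman} really realizes the tautological identity on sections and not some other self-map of the common sheaf. With both $g^Y_i$ and $g^X_i$ in $\Bbb C$, the units of $(g^X_i)^* \dashv (g^X_i)_*$ and the counits of $(g^Y_i)^* \dashv (g^Y_i)_*$ become identities on the relevant representables, so the diagram chase defining $\theta$ collapses; but this collapse must be tracked carefully through all four adjunctions and the base-change isomorphism $U \times_{Y'_i} X'_i \cong U \times_{Y_i} X_i$.
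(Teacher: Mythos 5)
Your proof follows the same route as the paper's: reduce to each component $i\in I$ via Lemma~\ref{inverse-image-theta.thm} together with the componentwise description of $(f_\bullet)_*$ (the paper cites \cite[(3.7.2)]{Lipman} for this bookkeeping), and then observe that in the single-scheme case both sides evaluate, by Lemma~\ref{inverse-image.thm} and the identification $U\times_{Y'}X'\cong U\times_Y X$, to the same group $\Gamma(U\times_Y X,\M)$. The ``unwinding'' you flag as the main remaining obstacle is precisely what the paper asserts in one line ($\theta$ is the identity map on $\Gamma(U\times_Y X,\M)$), so your argument is essentially the paper's proof written out in more detail.
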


\begin{proof}
By \cite[(3.7.2)]{Lipman} and Lemma~\ref{inverse-image-theta.thm}, 
it suffices to prove that
\[
\theta:(g^Y_i)^*(f_i)_*\rightarrow (f'_i)_*(g^X_i)^*
\]
is an isomorphism for each $i\in I$, and we may assume that 
$I=1$.
Then for $\M\in\Mod((\Bbb C/X)_E)$ and $U\in \Bbb C/Y'$, 
$\theta$ is the identity map on 
$\Gamma(U\times_Y X,\M)$, and is an isomorphism.
\end{proof}

\begin{lemma}\label{tilde-isom.thm}
Let $(i,f:U\rightarrow X_i)\in\Zar(X_\bullet)$ and $M$ a 
$\Gamma((i,f),\O_{X_\bullet})$-module.
Then for any $(\phi,h):(j,g)\rightarrow(i,f)\in\Zar(X_\bullet)/(i,f)$
with $V=\Spec A$, the source of $g$, affine,
we have that the canonical map
\[
\Gamma((j,g),\tilde M^p)=A\otimes_{\Gamma((i,f),\O_{X_\bullet})}M\rightarrow
\Gamma((j,g),\tilde M)
\]
is an isomorphism.
Similar results hold for the other two sites.
\end{lemma}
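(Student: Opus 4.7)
The plan is to apply Lemma~\ref{sheaf-extension.thm}, part \textbf{4}, to the slice site over $(i,f)$, with $\Cal D$ the full subcategory consisting of those objects $(k,g':W\to X_k)$ whose source $W$ is affine. A covering of any object $(k,g')$ in $\Zar(X_\bullet)/(i,f)$ is, by construction of the site, induced from an ordinary Zariski open covering of the source $W$ (with the index $k$ fixed), so affine-source objects form a basis of topology; in particular $\Cal D$ contains such a basis. It therefore suffices to show that $\tilde M^p$, restricted to $\Cal D$, is a $\Cal D$-sheaf in the sense of Section~\ref{sheaves.sec}, for then the assertion on $(j,g)$ follows at once from the cited part \textbf{4}.

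To verify the $\Cal D$-sheaf condition, fix $(k,g':V\to X_k)\in\Cal D$ with $V=\Spec A$ and the structural morphism $(\psi,r):(k,g')\to(i,f)$ in the slice category. The induced ring map $\Gamma((i,f),\O_{X_\bullet})\to A$ makes $N:=A\otimes_{\Gamma((i,f),\O_{X_\bullet})}M$ into an $A$-module, and for any affine open $W=\Spec B\subset V$ the open immersion $W\hookrightarrow V$ endows $B$ with an $A$-algebra structure, giving a canonical identification
\[
\Gamma((k,g'|_W),\tilde M^p)\;=\;B\otimes_{\Gamma((i,f),\O_{X_\bullet})}M\;\cong\;B\otimes_A N.
\]
Thus the restriction of $\tilde M^p$ to affine open subschemes of $V$ coincides with the classical presheaf on $\Spec A$ associated to the $A$-module $N$. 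The sheaf condition to check---exactness of the sequence
\[
0\to \Gamma((k,g'),\tilde M^p)\to \prod_i \Gamma((k,g'|_{V_i}),\tilde M^p)\to \prod_{i,l,m}\Gamma((k,g'|_{V_{ilm}}),\tilde M^p)
\]
for finite affine open coverings $V=\bigcup V_i$ and affine coverings $V_i\cap V_l=\bigcup_m V_{ilm}$---becomes the well-known statement that the quasi-coherent presheaf associated to $N$ is a Zariski sheaf on $\Spec A$, which reduces (via refinement by distinguished affines) to the elementary fact that $N\to \prod_i N_{a_i}\rightrightarrows \prod_{i,l} N_{a_ia_l}$ is exact whenever $(a_i)$ generates the unit ideal of $A$.

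The argument for the big Zariski site $\Zar^b(X_\bullet)$ is identical since the topology is still Zariski; only the ambient category of objects is enlarged, and this does not affect either the formula for $\tilde M^p$ on affine-source objects or the sheaf condition to be verified. For the fpqc site $\Fpqc(X_\bullet)$, the same scheme of proof applies, but the Zariski sheaf condition for $\widetilde N$ on $\Spec A$ is replaced by the corresponding fpqc sheaf condition; this is the classical faithfully flat descent for modules, namely that for any $A$-module $N$ the functor $B\mapsto B\otimes_A N$ on $A$-algebras is an fpqc sheaf. The only substantive obstacle in the whole argument is this descent input in the fpqc case; the Zariski cases are bookkeeping, and the reduction to an affine base via Lemma~\ref{sheaf-extension.thm}\,\textbf{4} is what makes the proof uniform across the three sites.
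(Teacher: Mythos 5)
Your proof is correct and takes essentially the same route as the paper: the paper's own proof is a one-line citation of Lemma~\ref{sheaf-extension.thm} together with Vistoli's result that $B\mapsto B\otimes_A N$ is a sheaf for the fpqc topology (faithfully flat descent for modules), which is precisely the reduction you spell out. The only difference is that you make explicit the choice of the affine-source basis $\Cal D$ and the verification of the $\Cal D$-sheaf condition, which the paper leaves implicit.
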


\begin{proof}
This follows immediately from \cite[(4.22)]{Vistoli} and
Lemma~\ref{sheaf-extension.thm}.
\end{proof}

\begin{lemma}\label{qch.thm}
Let $\M\in\Mod(X_\bullet)$.
Then the following are equivalent.
\begin{enumerate}
\item[\bf 1] $\M$ is quasi-coherent.
\item[\bf 2] For any 
$(i,f:U\rightarrow X_i)\in 
\Zar(X_\bullet)$ \(resp.\  $\Zar^b(X_\bullet)$, 
$\Fpqc(X_\bullet)$\) with $U=\Spec A$ affine, $\M|_{(i,f)}$ is canonically 
isomorphic to $\tilde M$, where $M=\Gamma((i,f),\M)$.
\item[\bf 3] For any morphism $(\phi,h):(j,g:V\rightarrow X_j)
\rightarrow (i,f:U\rightarrow X_i)$ in 
$\Zar(X_\bullet)$ \(resp.\  $\Zar^b(X_\bullet)$, 
$\Fpqc(X_\bullet)$\) with $U=\Spec A$ and $V=\Spec B$ affine,
the canonical map $B\otimes_A \Gamma((i,f),\M)\rightarrow \Gamma((j,g),\M)$ 
is an isomorphism.
\end{enumerate}
Similar results hold for the other two sites.
\end{lemma}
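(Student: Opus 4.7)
The plan is to establish $2 \Leftrightarrow 3$ and $2 \Rightarrow 1$ as formal consequences of the preparatory lemmas, and then to treat $1 \Rightarrow 2$ as the main step. For $2 \Rightarrow 3$: if $\M|_{(i,f)} \cong \tilde M$, then on any morphism $(j,g) \to (i,f)$ with $V = \Spec B$ affine, Lemma~\ref{tilde-isom.thm} gives $\Gamma((j,g),\M) = \Gamma((j,g),\tilde M) = B \otimes_A M$. Conversely for $3 \Rightarrow 2$: the canonical sheaf map $\alpha : \tilde M \to \M|_{(i,f)}$, defined as the sheafification of the presheaf map $\tilde M^p \to \M|_{(i,f)}$ adjoint to the identity $M \to \Gamma((i,f),\M)$, is an isomorphism on every affine object $(j,g) \to (i,f)$ because both sides take value $B \otimes_A M$ there, by Lemma~\ref{tilde-isom.thm} for $\tilde M$ and by condition 3 for $\M$; since such affine objects form a basis of the topology in each of the three sites, $\alpha$ is an isomorphism. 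The implication $2 \Rightarrow 1$ is immediate since each sheaf of the form $\tilde M$ is quasi-coherent by the observation in (\ref{qc-invertible.par}).

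For the remaining implication $1 \Rightarrow 2$, I would apply Corollary~\ref{locally-tilde.thm} at the object $(i,f)$ and refine to the affine basis, yielding a covering $(c_\lambda \to (i,f))$ with $c_\lambda = (i, g_\lambda : \Spec A_\lambda \to X_i)$ and $\M|_{c_\lambda} \cong \tilde L_\lambda$ for some $A_\lambda$-module $L_\lambda$; Lemma~\ref{tilde-isom.thm} then identifies $L_\lambda$ canonically with $\Gamma(c_\lambda,\M)$. Consider again the canonical map $\alpha : \tilde M \to \M|_{(i,f)}$. Restricted to $c_\lambda$, a direct computation based on Lemma~\ref{tilde-isom.thm} identifies $\tilde M|_{c_\lambda}$ with $\widetilde{A_\lambda \otimes_A M}$, so that $\alpha|_{c_\lambda}$ is determined by the $A_\lambda$-linear restriction map $\beta_\lambda : A_\lambda \otimes_A M \to L_\lambda$. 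Since $\alpha$ being an isomorphism is a local property on the cover $(c_\lambda)$, it suffices to show that each $\beta_\lambda$ is an isomorphism.

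To prove that $\beta_\lambda$ is an isomorphism, I would use the sheaf condition for $\M$ with respect to the cover $(c_\lambda \to (i,f))$ and affine refinements of its doubled fibre products $c_\lambda \times_{(i,f)} c_\mu$, combined with the fact that condition 2 is already available inside each $c_\lambda$ (because $\M|_{c_\lambda} = \tilde L_\lambda$): this identifies the sections of $\M$ on any affine refinement of $c_\lambda \times_{(i,f)} c_\mu$ as explicit tensor products over $A_\lambda$. Lemma~\ref{Kempf.thm} is invoked to guarantee that sections commute with the (possibly large) colimits appearing in these identifications. The main obstacle is that the functor $A_\lambda \otimes_A -$ is not left exact in general, so one cannot simply apply it to the kernel sequence expressing $M$ via the sheaf condition; one must rewrite both sides of the comparison diagram as explicit tensor products and check that they coincide. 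This mirrors the classical proof that on an affine scheme a quasi-coherent sheaf is the tilde of its global sections, now carried out in the present sited setting.
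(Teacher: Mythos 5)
Your handling of $2\Leftrightarrow 3$ and $2\Rightarrow 1$ is fine and agrees in substance with the paper: $2\Rightarrow 3$ is Lemma~\ref{tilde-isom.thm}, and $3\Rightarrow 2$ holds because the canonical map $\tilde M\rightarrow \M|_{(i,f)}$ is an isomorphism on the affine basis and hence an isomorphism by Lemma~\ref{sheaf-extension.thm}. The gap is in $1\Rightarrow 2$, which is where all the content lies. First, the obstacle you name is a red herring: in all three sites the covering morphisms are flat (open immersions for $\Zar$ and $\Zar^b$, fpqc morphisms for $\Fpqc$), so $A_\lambda\otimes_A-$ \emph{is} exact and does commute with the kernel defining $M=\Gamma((i,f),\M)$. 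The genuine difficulty, which your sketch does not address, is the converse identification: exactness only tells you that $A_\lambda\otimes_A M$ is the kernel of the tensored-up \v Cech sequence of the cover, and you still must identify that kernel with $L_\lambda=\Gamma(c_\lambda,\M)$. For the small Zariski site one can refine to basic opens $D(f)$ and run the classical affine-scheme argument, but for $\Fpqc(X_\bullet)$ the cover produced by Corollary~\ref{locally-tilde.thm} is an fpqc cover, and the statement $A_\lambda\otimes_A M\cong L_\lambda$ is precisely the effectivity part of Grothendieck's faithfully flat descent for modules. It does not follow from ``rewriting both sides as explicit tensor products'': one needs the exactness of the Amitsur complex, which is proved by base-changing along the faithfully flat cover itself to split the complex. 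Your appeal to the classical affine argument ``carried out in the sited setting'' silently assumes exactly this theorem.

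The paper closes this step by citing that theorem: after refining the cover from Corollary~\ref{locally-tilde.thm} to a single faithfully flat affine morphism $h:\Spec A'\rightarrow\Spec A$ (possible because affine objects are quasi-compact in all three sites), one has $\M|_{(i,fh)}\cong\tilde N$ with $N=\Gamma((i,fh),\M)$; the sheaf structure of $\M$ equips $N$ with a descent datum, and \cite[(4.21)]{Vistoli} produces an $A$-module $M$ with $A'\otimes_A M\cong N$ compatibly, whence $\tilde M\cong\M|_{(i,f)}$. To repair your argument, either invoke faithfully flat descent of modules at the point where you claim $\beta_\lambda$ is an isomorphism, or supply the Amitsur-complex argument explicitly; as written, your last paragraph asserts the conclusion of the descent theorem without proving or citing it.
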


\begin{proof}
{\bf 1$\Rightarrow$2}.
It suffices to show that
$\tilde M\cong \M|_{(i,f)}$ for some $A$-module $M$.
By Corollary~\ref{locally-tilde.thm}, 
for some faithfully flat algebra $A'$ of $A$, 
$\tilde N\cong \M|_{(i,fh)}$, where $h:\Spec A'\rightarrow \Spec A$ is
the canonical map, and $N=\Gamma((i,fh),\M)$.
Now we can find such an $M$ by the descent theory \cite[(4.21)]{Vistoli}.

{\bf 2$\Rightarrow$3} follows from
Lemma~\ref{tilde-isom.thm}.

{\bf 3$\Rightarrow$1} Let $(i,f:U\rightarrow X_i)$ be an object of
$\Zar(X_\bullet)$ (resp.\ $\Zar^b(X_\bullet)$, $\Fpqc(X_\bullet)$) with
$U=\Spec A$ affine.
Then $\M|_{(i,f)}\cong \tilde M$ for $M=\Gamma((i,f),\M)$ by
the uniqueness assertion in Lemma~\ref{sheaf-extension.thm}.
The assertion follows.
\end{proof}

\begin{lemma}\label{inv.thm}
Let $\varphi:X\rightarrow Y$ be an qfpqc morphism of schemes.
Let $\M$ be an $\O_Y$-module.
Then $\M$ is quasi-coherent \(resp.\ an invertible sheaf\) if and only if
$\varphi^*\M$ is so.
\end{lemma}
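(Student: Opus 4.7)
The \emph{only if} direction is immediate from paragraph~\ref{inverse-image.par}: any inverse image along a morphism of ringed sites preserves both quasi-coherence and invertibility, so in particular $\varphi^*$ does. For the \emph{if} direction, the plan begins with a standard reduction. Since $\varphi$ is qfpqc, Definition~\ref{qfpqc.def} produces $\psi:Z\to X$ with $\varphi\psi$ fpqc, and $(\varphi\psi)^*\M=\psi^*\varphi^*\M$ is again quasi-coherent (resp.\ invertible) by the only-if part applied to $\psi$. So I may replace $\varphi$ by $\varphi\psi$ and assume $\varphi$ is fpqc. Because the three notions of quasi-coherent/invertible sheaf (small Zariski, big Zariski, big fpqc) agree via the machinery of Section~\ref{gro.sec}, I choose to work on the big fpqc site, where descent is most convenient.

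The core of the argument is to verify the criterion of Lemma~\ref{qch.thm}~\textbf{3}: for any morphism $(\Spec B\to X_j)\to(\Spec A\to Y_i)$ in the big fpqc site of $Y$, the canonical map $B\otimes_A\Gamma((i,\Spec A),\M)\to\Gamma((j,\Spec B),\M)$ is an isomorphism. Given such affine $\Spec A\to Y$, I use the fpqc property of $\varphi$ to pick a quasi-compact open $W\subset\varphi^{-1}(\Spec A)$ with $\varphi(W)=\Spec A$, cover $W$ by finitely many affines and form the disjoint union, producing a faithfully flat affine $h:\Spec C\to \Spec A$ that factors through $X$. Then $\Spec C\in\Fpqc(X)$, and by hypothesis $\varphi^*\M|_{\Spec C}\cong\widetilde N$ with $N:=\Gamma(\Spec C,\M)$; by Lemma~\ref{inverse-image.thm} this agrees with the sections over $\Spec C$ viewed as an object of $\Fpqc(Y)$. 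The sheaf axiom for $\M$ on $\Fpqc(Y)$ applied to the cover $\{\Spec C\to\Spec A\}$ identifies $\Gamma(\Spec A,\M)$ with the equalizer of $N\rightrightarrows C\otimes_A N$. By Grothendieck's faithfully flat descent (\cite[(4.21)]{Vistoli}) this equalizer is precisely the $A$-module $M$ that descends $N$ with its canonical descent datum, so $N\cong C\otimes_A M$. Base-changing the whole construction along $\Spec A'\to\Spec A$ and repeating the argument yields $\Gamma(\Spec A',\M)\cong A'\otimes_A M$, which gives the required isomorphism for condition~\textbf{3} and hence the quasi-coherence of $\M$.

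For the invertible case, $\M$ is already quasi-coherent by the above. Invertibility is then a local-on-cover condition: by hypothesis there is a covering of $X$ trivializing $\varphi^*\M$; composing with the fpqc morphism $\varphi$ yields an fpqc covering of $Y$ on which $\M$ becomes isomorphic to the structure sheaf, so $\M$ is invertible by definition in paragraph~\ref{qc-invertible.par}. The main obstacle I expect is bookkeeping in the quasi-coherent step: matching the two coface maps from the sheaf axiom with the two maps built into the descent datum on $N$, and verifying that the construction of $\Spec C\to\Spec A$ can be made functorially in $\Spec A'\to\Spec A$ (or at least that the base-changed version of the construction still lies in the site). Both points are standard once set up, but they are where the flatness of $\varphi$ and the precise form of Lemma~\ref{qch.thm} are used in an essential way.
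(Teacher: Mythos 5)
Your quasi-coherent case follows the same strategy as the paper's proof: the only-if direction is formal, and for the converse you use the qfpqc factorization $\psi:Z\to X$ to manufacture a faithfully flat \emph{affine} morphism $\Spec C\to\Spec A$ over an affine piece of $Y$ that factors through $X$, and then apply faithfully flat descent of quasi-coherence. The paper does this on the small Zariski site ("the question is local on $Y$", reduce to $X$ affine and $\varphi$ flat) and cites \cite[(10.14)]{ETI} for the descent step; you instead move to the big fpqc site and verify criterion \textbf{3} of Lemma~\ref{qch.thm} by hand via the sheaf axiom and \cite[(4.21)]{Vistoli}. Both work, and your version has the mild advantage of making the descent datum and the base-change step $\Gamma(\Spec A',\M)\cong A'\otimes_AM$ explicit.

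The invertible case is where you diverge, and there is a circularity to watch. Your argument shows that $\M$ becomes free of rank one on the fpqc covering of $Y$ obtained by composing $\varphi$ with a trivializing cover of $\varphi^*\M$; that yields invertibility \emph{on the fpqc site}. To conclude invertibility on the small Zariski site where $\M$ actually lives, you appeal to the equivalence $\Inv(Y)\cong\Inv_{\Fpqc}(Y)$ from Section~\ref{gro.sec} --- but in the paper that equivalence is established in the lemma immediately \emph{following} this one, and its proof explicitly defers the key point back to the proof of Lemma~\ref{inv.thm}. The missing content is the ring-theoretic statement the paper proves directly: if $A\to B$ is faithfully flat and $B\otimes_AM$ is rank-one projective, then $M$ is rank-one projective (finite presentation and flatness descend, so $M$ is finite projective, and the rank is checked on fibers). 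With that lemma supplied your argument closes; without it, "fpqc-locally trivial" does not yet give "Zariski-locally trivial."
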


\begin{proof}
If $\M$ is quasi-coherent or invertible, then obviously $\varphi^*\M$ is so.
In order to prove the converse, as the question is local on $Y$, we
may assume that $Y$ is affine.
Then we may assume that $X$ is affine and $\varphi$ is flat.
The assertion follows from \cite[(10.14)]{ETI} for quasi-coherence.
To prove the assertion for invertible sheaves, it suffices to show that
if $A\rightarrow B$ is a homomorphims of commutative rings, $M$ an 
$A$-module, and $B\otimes_AM$ is rank-one projective, then $M$ is
rank-one projective.
As $B\otimes_A M$ is finitely presented and flat, $M$ is so.
Thus $M$ is finite projective.
It is easy to see that $M$ has rank one.
\end{proof}

\begin{lemma}
The functors $\mu^*:\Qch(X_\bullet)\rightarrow\Qch^{b}(X_\bullet)$ and
$\mu_*:\Qch^{b}(X_\bullet)\rightarrow\Qch(X_\bullet)$ are well-defined and
quasi-inverse each other.
Similar results hold for the morphism 
of ringed sites $\nu:\Fpqc(X_\bullet)\rightarrow \Zar^b(X_\bullet)$.

These equivalences induce equivalences 
of the categories of invertible sheaves
$\Inv(X_\bullet)$, $\Inv^b(X_\bullet)$, 
and $\Inv_{\Fpqc}(X_\bullet)$.
\end{lemma}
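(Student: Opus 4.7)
The plan is to exploit Lemma~\ref{qch.thm}, which says that on each of the three sites $\Zar(X_\bullet)$, $\Zar^b(X_\bullet)$, and $\Fpqc(X_\bullet)$, quasi-coherence is captured by the same local data: the sections on affine objects $(i, f: U \to X_i)$ with $U$ affine, together with the tilde-module restriction law. Since these affine objects form a common basis for all three topologies, each of the pullback functors $\mu^*$, $\nu^*$ and pushforward functors $\mu_*$, $\nu_*$ should restrict to the quasi-coherent subcategories, and the unit and counit of the adjunctions should become isomorphisms because they are identities on this common affine basis.

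First I would verify that $\mu^*$ sends $\Qch(X_\bullet)$ into $\Qch^b(X_\bullet)$. By paragraph (\ref{Zar-description.par}), $\Gamma((i, f:U\to X_i), \mu^*\M) = \Gamma(U, f^*\M_i)$. When $U = \Spec A$ is affine and $\M\in\Qch(X_\bullet)$, each $\M_i$ is quasi-coherent on $X_i$ in the usual sense, so $f^*\M_i$ equals $\tilde M$ on $\Zar(U)$ for $M = \Gamma(U, f^*\M_i)$. The base-change compatibility expressed by Lipman's theta (Lemma~\ref{inverse-image-theta.thm} and Lemma~\ref{theta.thm}) then yields condition {\bf 3} of Lemma~\ref{qch.thm} for $\mu^*\M$. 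In the opposite direction, $\mu_*$ is just the restriction along the site inclusion, so condition {\bf 3} of Lemma~\ref{qch.thm} for $\mu_*\N$ on $\Zar(X_\bullet)$ is an instance of the same condition for $\N$ on $\Zar^b(X_\bullet)$, since Zariski open immersions are $\kappa$-morphisms. The identical argument handles $\nu^*$ and $\nu_*$, using that an affine $U$ remains affine when viewed as an object of the fpqc site.

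Second, I would show the unit $\eta_\M : \M \to \mu_*\mu^*\M$ and counit $\epsilon_\N : \mu^*\mu_*\N \to \N$ are isomorphisms on quasi-coherent objects. On an affine $(i, f: U \to X_i) \in \Zar(X_\bullet)$, paragraph (\ref{Zar-description.par}) gives $\Gamma((i,f), \mu_*\mu^*\M) = \Gamma(U, f^*\M_i) = \Gamma((i,f), \M)$ canonically, so $\eta_\M$ is the identity on the common affine basis. By Lemma~\ref{qch.thm} {\bf 2} together with the uniqueness assertion in Lemma~\ref{sheaf-extension.thm} {\bf 4}, this forces $\eta_\M$ to be a global isomorphism. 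A symmetric computation on the big site handles $\epsilon_\N$, and identical reasoning handles $\nu$. The invertible case then follows: $\mu^*$ and $\nu^*$ always preserve invertibility (paragraph (\ref{inverse-image.par})), while $\mu_*$ and $\nu_*$ preserve it on the quasi-coherent subcategory because invertibility is detected on the affine basis via Lemma~\ref{inv.thm} --- exactly the basis on which the equivalences are the identity.

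The main obstacle I anticipate is simply the bookkeeping for Lemma~\ref{qch.thm} {\bf 3} on the big site: morphisms $(j, g) \to (i, f)$ in $\Zar^b(X_\bullet)$ combine a nontrivial index morphism $\phi$ in $I$ with a $\kappa$-morphism of schemes, so the verification requires tracking both the diagrammatic transition maps of $\M$ and the flat base change for quasi-coherent sheaves on affine schemes. Both ingredients are supplied by Lemma~\ref{inverse-image-theta.thm} and Lemma~\ref{theta.thm}, so the obstacle is notational rather than substantive.
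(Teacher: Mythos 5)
Your proof is correct and follows essentially the same route as the paper: well-definedness of the pullbacks via preservation of quasi-coherence under inverse image, well-definedness of the pushforwards via condition {\bf 3} of Lemma~\ref{qch.thm}, quasi-inverseness by checking that both functors fix $\tilde M$ on the common affine basis, and the invertible case via faithfully flat descent of rank-one projectivity as in the proof of Lemma~\ref{inv.thm}. The only cosmetic difference is that the paper disposes of the well-definedness of $\mu^*$ by citing the abstract fact (\ref{inverse-image.par}) that inverse images of ringed sites preserve quasi-coherence, rather than your explicit verification of condition {\bf 3} (which, as you note, also needs the equivariance of quasi-coherent modules to handle the transition maps $\alpha_\phi$).
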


\begin{proof}
Well-definedness of $\mu^*$ is (\ref{inverse-image.par}).
Well-definedness of $\mu_*$ is checked by the condition
{\bf 3} of Lemma~\ref{qch.thm}.

If $(i,f:\Spec A\rightarrow X_i)$ is an object of $\Zar(X_\bullet)$ and
$M$ an $A$-module, then $\mu^*(\tilde M)\cong \tilde M$ and
$\mu_*(\tilde M)\cong \tilde M$.
Thus $\mu^*$ and $\mu_*$ are quasi-inverse each other.
Similarly for $\nu$.

It is obvious that the inverse image preserves invertible sheaves.
As $\mu_*(\tilde M)\cong \tilde M$,
it suffices to show that for any faithfully flat ring homomorphism 
$A\rightarrow B$ and an $A$-module $M$, $M$ is rank-one projective
if and only if $B\otimes_A M$ is rank-one projective,
in order to prove that $\mu_*$ preserves invertible sheaves.
This is done in the proof of Lemma~\ref{inv.thm}.

Similarly for $\nu$.
\end{proof}

\begin{lemma}\label{mor-munu.thm}
For a morphism of $I\op$-diagrams of schemes $f_\bullet:X_\bullet\rightarrow
Y_\bullet$, the diagram
\[
\xymatrix{
\Fpqc(X_\bullet) \ar[r]^\nu \ar[d]^{f_\bullet} &
\Zar^b(X_\bullet) \ar[r]^\mu \ar[d]^{f_\bullet} &
\Zar(X_\bullet) \ar[d]^{f_\bullet} \\
\Fpqc(Y_\bullet) \ar[r]^\nu &
\Zar^b(Y_\bullet) \ar[r]^\mu &
\Zar(Y_\bullet)
}
\]
is commutative.
\end{lemma}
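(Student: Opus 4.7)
The claim is strict commutativity of two squares of morphisms of ringed sites. By~(\ref{inverse-image.par}), such a morphism is a continuous functor on the underlying categories going in the opposite direction, together with a compatible map of structure sheaves. The plan is to unwind each arrow in this way and observe that the resulting squares of functors commute on the nose; compatibility on structure sheaves will then be automatic.

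The three underlying continuous functors are as follows. The one for $\mu$ is the inclusion $\Zar(X_\bullet)\hookrightarrow\Zar^b(X_\bullet)$ (an open immersion is a $\kappa$-morphism), and similarly on the $Y_\bullet$ side. The one for $\nu$ is the identity on $(\km)/X_\bullet$, since $\Zar^b(X_\bullet)$ and $\Fpqc(X_\bullet)$ share the same underlying category and differ only in pretopology. The continuous functor underlying $f_\bullet:\Zar^b(X_\bullet)\to\Zar^b(Y_\bullet)$, and identically for $f_\bullet$ on the $\Fpqc$ sites, is
\[
(i,g\colon V\to Y_i)\;\longmapsto\;(i,V\times_{Y_i}X_i\to X_i),
\]
while the one underlying $f_\bullet$ on the small Zariski sites sends an open immersion $(i,V\hookrightarrow Y_i)$ to $(i,f_i^{-1}(V)\hookrightarrow X_i)$.

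With these descriptions, the $\nu$-square (the left one) commutes strictly because $\nu$ is the identity functor in each row and the formula for $f_\bullet^*$ is literally the same on $\Zar^b$ and $\Fpqc$. The $\mu$-square (the right one) reduces to the observation that, for an open immersion $V\hookrightarrow Y_i$, the fiber product $V\times_{Y_i}X_i$ is canonically the open subscheme $f_i^{-1}(V)$ of $X_i$ with its inclusion; hence the two composites $(i,V\hookrightarrow Y_i)\mapsto (i,f_i^{-1}(V)\hookrightarrow X_i)$ and $(i,V\hookrightarrow Y_i)\mapsto (i,V\times_{Y_i}X_i\to X_i)$ agree as functors.

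The structure-sheaf maps are, in each of the six arrows, induced by the identity $\Gamma(U,\O_U)\to\Gamma(U,\O_U)$ at the appropriate underlying scheme $U$, so their compatibility in each square is immediate. The only thing one needs to take care with is the direction convention for morphisms of ringed sites, whose underlying continuous functor points the opposite way from the arrow drawn; once this is kept straight, there is no real obstacle, and the commutativity holds on the nose rather than merely up to a canonical isomorphism.
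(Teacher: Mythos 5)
Your unwinding is correct, and the paper's own proof is just the single word ``Obvious,'' so you are supplying exactly the routine verification the author had in mind: identify the underlying continuous functors (identity for $\nu$, inclusion for $\mu$, base change for $f_\bullet$) and check the squares. The only caveat is that strict (``on the nose'') commutativity of the $\mu$-square depends on choosing the fiber product $V\times_{Y_i}X_i$ of an open immersion to \emph{be} the preimage $f_i^{-1}(V)$; without such a consistent choice one gets commutativity up to canonical isomorphism, which is all that is used downstream (e.g.\ in Lemma~\ref{j-compatibility.thm}).
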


\begin{proof}
Obvious.
\end{proof}

\begin{lemma}\label{fpqc-Zariski.thm}
Let $\varphi:X\rightarrow Y$ be a qfpqc morphism of schemes.
Then 
\[
0\rightarrow \M\xrightarrow u 
\varphi_*\varphi^*\M \xrightarrow{\beta_1-\beta_2}\psi_*\psi^*\M
\]
is exact for $\M\in\Qch(Y)$,
where $u$ is the unit map for the adjoint pair $(\varphi^*,\varphi_*)$,
$p_i:X\times_Y X\rightarrow X$ is the $i$th projection, 
$\psi=\varphi p_1=\varphi p_2$, 
and $\beta_i$ is the composite
\[
\varphi_*\varphi^*\M 
\xrightarrow{u}
\varphi_*(p_i)_*(p_i)^*\varphi^*\M
\cong \psi_*\psi^*\M.
\]
\end{lemma}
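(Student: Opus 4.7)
The plan is to transport the sequence to the fpqc site, where the claim reduces to the qfpqc sheaf condition for $\M$ along the one-element covering $\{\varphi\colon X\to Y\}$. First, fix a regular cardinal $\kappa$ large enough that $X$, $Y$, and $X\times_Y X$ are all $\kappa$-schemes, so that $\varphi$, the projections $p_1,p_2$, and $\psi=\varphi p_1$ all live in $\Fpqc(Y)$. By the equivalence $\Qch(Y)\cong\Qch_{\Fpqc}(Y)$ just proved, together with the compatibility of $\mu,\nu$ with direct and inverse images recorded in Lemma~\ref{mor-munu.thm}, it is enough to prove exactness of the image sequence inside $\Mod_{\Fpqc}(Y)$.

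To test exactness in $\Mod_{\Fpqc}(Y)$ I would evaluate on an arbitrary object $(T\to Y)$ of $\Fpqc(Y)$. Since $\varphi,\psi\in\Bbb C$, Lemma~\ref{inverse-image.thm} says that $\varphi^*\M$ and $\psi^*\M$ are simply restrictions of $\M$, so the definition of pushforward gives
\[
\Gamma(T,\varphi_*\varphi^*\M)=\M(X_T),\qquad
\Gamma(T,\psi_*\psi^*\M)=\M(X_T\times_T X_T),
\]
where $X_T:=X\times_Y T$. Since qfpqc is stable under base change (Lemma~\ref{qfpqc-basic.thm}), $X_T\to T$ is qfpqc, i.e., $\{X_T\to T\}$ is a singleton qfpqc covering of $T$. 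Using that every fpqc sheaf is a $\Qfpqc$-sheaf (as already noted in Section~\ref{gro.sec}), the sheaf axiom applied to $\M$ along this cover produces exactly the exact sequence
\[
0\to\M(T)\to\M(X_T)\to\M(X_T\times_T X_T),
\]
whose last map is the difference of the pullbacks along the two projections $X_T\times_T X_T\rightrightarrows X_T$.

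The remaining task is to identify this \v{C}ech differential with the map induced by $\beta_1-\beta_2$ on sections at $T$. Unwinding the adjunction units defining each $\beta_i$ and observing that the $i$-th projection of $X_T\times_T X_T$ is the base change of $p_i$, this is a direct naturality trace. The main obstacle I anticipate is precisely this bookkeeping — following the sequence through the equivalences $\mu^*,\nu^*$ and matching $\beta_1-\beta_2$ against the \v{C}ech differential — rather than anything substantive, since all the genuine descent content is packaged in the single fact that a quasi-coherent sheaf is an fpqc sheaf.
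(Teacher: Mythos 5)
Your proposal is correct and is essentially the paper's own argument: the paper's proof simply observes that the sequence is the (q)fpqc sheaf axiom for $\mu^*(\M)$ along the covering $\{\varphi^{-1}(V)\to V\}$, which holds because $\mu^*(\M)\cong\nu_*\nu^*\mu^*(\M)$ is an fpqc sheaf and hence a $\Qfpqc$-sheaf. Your version spells out the same reduction (evaluation on objects $T\to Y$, identification of $\beta_1-\beta_2$ with the \v{C}ech differential) that the paper leaves implicit.
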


\begin{proof}
This is equivalent to say that $\mu^*(\M)$ is already a sheaf in the
fpqc topology, which is equivalent to the qfpqc topology.
But this is trivial, as $\mu^*(\M)\cong \nu_*\nu^*\mu^*(\M)$.
\end{proof}

\paragraph
Let $\M\in\Qch_{\Fpqc}(X_\bullet)$,
and $(i,f:U\rightarrow X_i)\in\Fpqc(X_\bullet)$ with
$U=\Spec A$ affine.
Then $\M|_{(i,U)}\cong \tilde M$, where $M=\Gamma((i,U),\M)$, by
Lemma~\ref{qch.thm}.
For any covering of the form $\frak U:(i,\Spec B)\rightarrow (i,\Spec A)$ and
any $A$-module $M$, the \v Cech cohomology $\cc^i(\frak U,\tilde M)=0$ 
for $i>0$.
It follows that $\cc^i((i,U),\tilde M)=0$ for $i>0$.
Quite similarly to the proof of \cite[(2.12)]{Milne}, 
we can prove that 
$H^n((i,U),\M)=0$ for $U$ affine, $n>0$, and any quasi-coherent sheaf
on $\Fpqc(X_\bullet)$.

\paragraph
An $\O_{X_\bullet}$-module $\M$ is said to be {\em equivariant} if
the canonical map $\alpha_\phi:X_\phi^*\M_i\rightarrow \M_j$ is
an isomorphism for any morphism $\phi:i\rightarrow j$ in $I$, see
\cite[(4.14)]{ETI}.
The full subcategory of $\Mod(X_\bullet)$ consisting of 
equivariant $\O_{X_\bullet}$-modules is denoted by $\EM(X_\bullet)$.
Similarly, an equivariant module on 
$\Zar^b(X_\bullet)$ and 
$\Fpqc(X_\bullet)$ are defined, and the full subcategories of these, 
denoted by $\EM^b(X_\bullet)$ and
$\EM_{\Fpqc}(X_\bullet)$, respectively,
are defined.

\paragraph
An object $\M$ in $\Mod(X_\bullet)$ 
is said to be {\em locally quasi-coherent} if 
the restriction $\M_i$ is quasi-coherent for each $i\in I$.

\begin{lemma}
Let $\M\in \Mod(X_\bullet)$ \(resp.\  $\Mod^b(X_\bullet)$, 
$\Mod_{\Fpqc}(X_\bullet)$\).
Then $\M$ is
quasi-coherent if and only if it is locally quasi-coherent and equivariant.
\end{lemma}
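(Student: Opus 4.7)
The plan is to reduce both directions to the sectionwise characterization of quasi-coherence given by condition \textbf{3} of Lemma~\ref{qch.thm}. Throughout, I will work with $\Zar(X_\bullet)$; the other two cases are formally identical once one observes that Lemma~\ref{qch.thm} and Lemma~\ref{tilde-isom.thm} are stated uniformly for all three sites.

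For the \emph{only if} direction, suppose $\M$ is quasi-coherent on $\Zar(X_\bullet)$. Restriction along the inclusion $\Zar(X_i)\hookrightarrow \Zar(X_\bullet)$ is exact and takes $\O$-modules to $\O$-modules, and condition \textbf{3} of Lemma~\ref{qch.thm}, applied only to morphisms $(\phi,h)$ with $\phi=\id_i$, for $\M$ implies the same condition for $\M|_{\Zar(X_i)}=\M_i$. Hence each $\M_i$ is quasi-coherent, i.e.\ $\M$ is locally quasi-coherent. To get equivariance, fix $\phi:i\to j$ in $I$ and note that by Lemma~\ref{inverse-image-theta.thm} and Lemma~\ref{inverse-image.thm} the sheaf $X_\phi^{*}\M_i$ is obtained by restricting sections: $\Gamma((j,g:V\to X_j),X_\phi^{*}\M_i)=\Gamma((i,X_\phi g),\M)$. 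The structure map $\alpha_\phi$ sends such a section to its image under the restriction coming from the site morphism $(\phi,\id_V):(j,g)\to(i,X_\phi g)$, which after taking $V=\Spec B$ affine becomes the canonical map $B\otimes_A\Gamma((i,f),\M)\to \Gamma((j,g),\M)$ (taking $U=\Spec A$ and $f$ suitably); this is an isomorphism by condition \textbf{3} of Lemma~\ref{qch.thm} applied to $\M$, so $\alpha_\phi$ is an isomorphism on affine opens, hence everywhere.

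For the \emph{if} direction, assume $\M$ is locally quasi-coherent and equivariant, and verify condition \textbf{3} of Lemma~\ref{qch.thm}. Given a morphism $(\phi,h):(j,g:V\to X_j)\to(i,f:U\to X_i)$ with $U=\Spec A$ and $V=\Spec B$ affine, the relation $fh=X_\phi g$ allows the factorization
\[
(j,g:V\to X_j)\xrightarrow{(\phi,\id_V)}(i,fh:V\to X_i)\xrightarrow{(\id_i,h)}(i,f:U\to X_i)
\]
in $\Zar(X_\bullet)$. The restriction map along the second arrow is a morphism entirely inside $\Zar(X_i)$, so by quasi-coherence of $\M_i$ (and Lemma~\ref{qch.thm} \textbf{3} applied to $\M_i$) it induces an isomorphism $B\otimes_A\Gamma((i,f),\M)\xrightarrow{\sim}\Gamma((i,fh),\M)$. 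The restriction map along the first arrow is, by the description of $X_\phi^{*}\M_i$ recalled in the previous paragraph, the component of $\alpha_\phi:X_\phi^{*}\M_i\to\M_j$ on the affine $V$; equivariance says this is an isomorphism. Composing these two isomorphisms gives exactly the map $B\otimes_A\Gamma((i,f),\M)\to\Gamma((j,g),\M)$ of condition \textbf{3}, proving quasi-coherence.

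I do not expect any real obstacle here; the only point requiring care is the identification of $\alpha_\phi$, on an affine $g:V\to X_j$, with the presheaf-theoretic restriction along $(\phi,\id_V)$. This is routine from Lemma~\ref{inverse-image.thm} (which makes $X_\phi^{*}$ purely a restriction) together with the definition of $\alpha_\phi$ in \cite[(4.14)]{ETI}, and it is the only place where the distinction between the three sites plays any role, but Lemma~\ref{inverse-image.thm} is available for all three.
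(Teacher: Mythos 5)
The strategy of reducing both directions to condition \textbf{3} of Lemma~\ref{qch.thm} is sound, but the step you yourself single out as ``the only point requiring care'' is exactly where the argument breaks. You justify the identification $\Gamma((j,g),X_\phi^{*}\M_i)=\Gamma((i,X_\phi g),\M)$ by Lemma~\ref{inverse-image.thm}, and likewise factor $(j,g)\to(i,fh)\to(i,f)$ through the object $(i,fh\colon V\to X_i)$. Lemma~\ref{inverse-image.thm} is indeed stated for all three sites, but its hypothesis is that the morphism one pulls back along lies in $\Bbb C$, and $\Bbb C$ differs among the three sites: for $\Zar(X_\bullet)$ it is the class of open immersions, and the transition map $X_\phi$ is almost never an open immersion (in the intended application $X_\bullet=B_G^M(X)$ it is the action or a projection $G\times X\to X$). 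Consequently $(i,X_\phi g)$ and $(i,fh)$, with $fh=X_\phi g$, are not even objects of $\Zar(X_\bullet)$, and $X_\phi^{*}$ is not a restriction there --- it is a genuine inverse image, computed by a colimit followed by sheafification, whose sections on an affine are \emph{not} in general $B\otimes_A\Gamma((i,f),\M)$. Even for $\Zar^b(X_\bullet)$ and $\Fpqc(X_\bullet)$ your step needs $X_\phi\in(\km)$, which the statement does not assume (compare Lemma~\ref{five.thm}, where $X_\phi\in\Bbb C$ is an explicit hypothesis).

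The gap is repairable, because at every point where you invoke the identification, $\M_i$ is already known (in the ``only if'' direction, after your first step) or assumed (in the ``if'' direction) to be quasi-coherent. For quasi-coherent $\M_i$ one has $\M_i|_{(i,f)}\cong\tilde M$ with $M=\Gamma((i,f),\M)$ by Lemma~\ref{qch.thm}, \textbf{2}, and the inverse image of $\tilde M$ on an affine $(j,g\colon\Spec B\to X_j)$ mapping to $(i,f\colon\Spec A\to X_i)$ is $(B\otimes_AM)\,\tilde{}\,$; together with Lemma~\ref{tilde-isom.thm} this gives $\Gamma((j,g),X_\phi^{*}\M_i)\cong B\otimes_A\Gamma((i,f),\M)$ with no appeal to Lemma~\ref{inverse-image.thm}, and the rest of your argument then goes through. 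You should also note that the paper itself avoids all of this by quoting \cite[(7.3)]{ETI} for the small Zariski site and transporting the statement to the other two sites via Lemma~\ref{qch.thm} and Lemma~\ref{mor-munu.thm}.
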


\begin{proof}
The case $\M\in\Mod(X_\bullet)$ is \cite[(7.3)]{ETI}.
The other cases are proved using the discussion of 
\cite[(7.3)]{ETI}, Lemma~\ref{qch.thm}, and Lemma~\ref{mor-munu.thm}.
\end{proof}

\begin{lemma}\label{five.thm}
Assume that $X_\phi\in\Bbb C$ for $\phi\in\Mor(I)$.
If 
\[
\M_1\rightarrow \M_2\rightarrow \M_3\rightarrow \M_4\rightarrow \M_5
\]
is an exact sequence in $\Mod((\Bbb C/X_\bullet)_E)$ and
$\M_1$, $\M_2$, $\M_4$, and $\M_5$ are equivariant, then $\M_3$ is
equivariant.
In particular, $\EM((\Bbb C/X_\bullet)_E)$ is closed under extensions, 
kernels, and cokernels in $\Mod((\Bbb C/X_\bullet)_E)$, and hence
itself is an abelian category, and the
inclusion $\EM((\Bbb C/X_\bullet)_E)\hookrightarrow
\Mod((\Bbb C/X_\bullet)_E)$ is an exact functor.
\end{lemma}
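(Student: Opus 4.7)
The plan is to reduce to the classical five-lemma. Fix a morphism $\phi:i\to j$ in $I$. We need to show that the comparison map $\alpha_\phi:X_\phi^*(\M_3)_i\to(\M_3)_j$ is an isomorphism, knowing that the corresponding maps for $\M_1,\M_2,\M_4,\M_5$ are isomorphisms.

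First I would restrict the given exact sequence to $i$ and to $j$. By Lemma~\ref{restriction.thm}, the restriction functors $(?)_i$ and $(?)_j$ are exact, so the sequences
\[
(\M_1)_i\to(\M_2)_i\to(\M_3)_i\to(\M_4)_i\to(\M_5)_i
\]
and
\[
(\M_1)_j\to(\M_2)_j\to(\M_3)_j\to(\M_4)_j\to(\M_5)_j
\]
are exact in $\Mod((\Bbb C/X_i)_E)$ and $\Mod((\Bbb C/X_j)_E)$, respectively. The hypothesis $X_\phi\in\Bbb C$ lets me invoke Lemma~\ref{inverse-image.thm}, which identifies $X_\phi^*$ with a restriction along the object $X_\phi:X_j\to X_i$ of $\Bbb C/X_i$; in particular $X_\phi^*$ is exact. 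Applying $X_\phi^*$ to the first sequence thus yields an exact sequence whose terms are $X_\phi^*(\M_k)_i$ for $k=1,\dots,5$.

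By naturality of the comparison maps $\alpha_\phi$ in the module argument, these two exact sequences fit into a commutative ladder with vertical arrows $\alpha_\phi^{(k)}:X_\phi^*(\M_k)_i\to(\M_k)_j$. The equivariance of $\M_1,\M_2,\M_4,\M_5$ says that $\alpha_\phi^{(1)},\alpha_\phi^{(2)},\alpha_\phi^{(4)},\alpha_\phi^{(5)}$ are isomorphisms, so the classical five-lemma forces $\alpha_\phi^{(3)}$ to be an isomorphism. As $\phi$ was arbitrary, $\M_3$ is equivariant. No real obstacle arises; the only point that needs the hypothesis $X_\phi\in\Bbb C$ is the exactness of $X_\phi^*$, which is exactly what Lemma~\ref{inverse-image.thm} provides.

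For the \textquotedblleft in particular\textquotedblright\ part: the zero sheaf is trivially equivariant, so closure under kernels follows by applying the statement to $0\to 0\to \Ker(\M_1\to\M_2)\to\M_1\to\M_2$, closure under cokernels by applying it to $\M_1\to\M_2\to\Coker(\M_1\to\M_2)\to 0\to 0$, and closure under extensions by applying it to $0\to\M_1\to\M_2\to\M_3\to 0$ padded with a trailing zero. Thus $\EM((\Bbb C/X_\bullet)_E)$ is an abelian subcategory of $\Mod((\Bbb C/X_\bullet)_E)$ and the inclusion is exact.
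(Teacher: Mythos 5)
Your proof is correct and follows essentially the same route as the paper: exactness of the restriction functors (Lemma~\ref{restriction.thm}), exactness of $X_\phi^*$ via Lemma~\ref{inverse-image.thm} under the hypothesis $X_\phi\in\Bbb C$, naturality of $\alpha_\phi$, and the classical five lemma. Your explicit padding-with-zeros argument for the \textquotedblleft in particular\textquotedblright\ part is a correct spelling-out of what the paper leaves implicit.
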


\begin{proof}
For each morphism $\phi:i\rightarrow j$ of $I$, consider the 
commutative diagram
\[
\xymatrix{
X_\phi^*(\M_1)_i \ar[d]^{\alpha_\phi(\M_1)} \ar[r] &
X_\phi^*(\M_2)_i \ar[d]^{\alpha_\phi(\M_2)} \ar[r] &
X_\phi^*(\M_3)_i \ar[d]^{\alpha_\phi(\M_3)} \ar[r] &
X_\phi^*(\M_4)_i \ar[d]^{\alpha_\phi(\M_4)} \ar[r] &
X_\phi^*(\M_5)_i \ar[d]^{\alpha_\phi(\M_5)} \\
(\M_1)_j \ar[r] &
(\M_2)_j \ar[r] &
(\M_3)_j \ar[r] &
(\M_4)_j \ar[r] &
(\M_5)_j
}.
\]
The rows are exact by Lemma~\ref{restriction.thm} and
Lemma~\ref{inverse-image.thm}.
By assumption, $\alpha_\phi(\M_i)$ is an isomorphism for $i=1,2,4,5$.
By the five lemma, $\alpha_\phi(\M_3)$ is also an isomorphism.
Hence $\M_3$ is equivariant.
\end{proof}

\paragraph
Let $S$ be a scheme, and $G$ an $S$-group scheme.
Fix $\kappa$ sufficiently large so that $G\rightarrow S$ is a 
$\kappa$-morphism.
Let $X$ be a $G$-scheme.

\paragraph
We define 
$\Zar(G,X):=\Zar(B_G^M(X))$ and
$\Zar^+(G,X):=\Zar(B_G(X)_{(\Delta)\mon})$.
See for the notation, see \cite{ETI}.
Strictly speaking, the object set of 
$\Zar(X_\bullet)$ is slightly different in \cite{ETI}.
In our definition, an object of $\Zar(X_\bullet)$ is a pair 
$(i,h:U\rightarrow X_i)$ with $h$ an open immersion, while in \cite{ETI},
$h$ is required to be the inclusion map of an open subscheme.
But this difference will not cause any trouble.
The category  of $(G,\O_X)$-modules $\Mod(\Zar(G,X))$ is denoted by
$\Mod(G,X)$.
$\Mod(\Zar^+(G,X))$ is denoted by $\Mod^+(G,X)$.
Similar definitions are done in obvious ways.
For example, $\Fpqc(G,X)$ means $\Fpqc(B_G^M(X))$,
$\EM^{b,+}(G,X)$ means $\EM(\Zar^{b,+}(G,X))=\EM(\Zar^b(B_G(X)_{(\Delta)\mon}))$,
and $\Inv^{b}(G,X)$ means $\Inv(\Zar^{b}(G,X))
=\Inv(\Zar^{b}(B_G^M(X)))$, and so on.

A $(G,\O_X)$-module means an object of $\Mod(G,X)$.
An fpqc $(G,\O_X)$-module means an object of $\Mod_{\Fpqc}(G,X)$.

\paragraph\label{ten.par}
There are canonical restriction functors
\[
(?)_{(\Delta)\mon_{[0],[1],[2]}}:
\Mod^+(G,X)\rightarrow \Mod(G,X),
\]
\[
(?)_{(\Delta)\mon_{[0],[1],[2]}}:
\Mod_{\Fpqc}^+(G,X)\rightarrow \Mod_{\Fpqc}(G,X),
\]
and so on.
These functors induce equivalences
\[
\EM^+(G,X)\rightarrow \EM(G,X),
\]
\[
\Qch^+(G,X)\rightarrow \Qch(G,X),
\]
\[
\EM_{\Fpqc}^+(G,X)\rightarrow \EM_{\Fpqc}(G,X)
\]
and so on.
Thus the six categories
\begin{multline*}
\Qch^+(G,X),\; \Qch(G,X),\;\Qch^{b,+}(G,X),
\\ 
\Qch^{b}(G,X),\;\Qch_{\Fpqc}^+(G,X),\; \Qch_{\Fpqc}(G,X)
\end{multline*}
are equivalent and identified in a natural way.

\paragraph
The category $\EM(G,X)$ and $\Qch(G,X)$ are identified with the category 
of 
$G$-linearized
$\O_X$-modules and that of $G$-linearized quasi-coherent $\O_X$-modules
defined in \cite{GIT}, respectively.
Letting $\Qch/S$ denote the stack of quasi-coherent sheaves in 
(Zariski site)
over $\Sch/S$ with the fpqc topology (see \cite[(4.23)]{Vistoli}),
$\Qch(G,X)$ is also equivalent to the category $(\Qch/S(X))^G$ of
$G$-equivariant objects in $\Qch/S(X)$ (\cite[(3.48)]{Vistoli}).

\paragraph Let $F$ be an $S$-group scheme, and $X$ an $S$-scheme on which
$F$ acts trivially.
For an $(F,\O_X)$-module $\M$, $\M^F$ denotes the 
descended $\O_X$-module $(?)_{-1}R_{(\Delta_M)}\M$, where we consider that 
$\M\in\Mod(F,X)$ \cite[(30.1), (30.2)]{ETI}.
We call $\M^F$ the $F$-invariance of $\M$.
A similar definition can be done for the fpqc topology.

Let $\rho:p^*\M\rightarrow p^*\M$ be the $F$-linearization, where 
$p=p_2=a:F\times X\rightarrow X$ is the (trivial) action, which equals the
second projection.
Then $\M^F$ is the kernel of $(1-\rho)u:\M\rightarrow p_*p^*\M$, where 
$u:\M\rightarrow p_*p^*\M$ is the unit of adjunction of the adjoint
pair $(p^*,p_*)$.
We say that $\M$ is $F$-trivial if $\M^F=\M$.
If $p$ is quasi-compact and $\M$ is quasi-coherent, then
it is easy to see that $\M^F$ is also quasi-coherent.

\begin{proposition}\label{basic.thm}
Let $\varphi:X\rightarrow Y$ be a principal $G$-bundle.
Then the inverse image $\varphi^*: \Qch(Y)\rightarrow \Qch(G,X)$ 
is an equivalence.
The functor $(?)^G\circ \varphi_*: \Qch(G,X)\rightarrow \Qch(Y)$ is 
its quasi-inverse.
The unit of adjunction $u:\Id\rightarrow (?)^G\varphi_*\varphi^*$ is 
the map induced by the unit of adjunction
$u':\Id \rightarrow \varphi_*\varphi^*$ of the adjoint pair $(\varphi^*
,\varphi_*)$.
That is, $u$ is the unique map such that $u'=\iota u$, where 
$\iota:(?)^G\hookrightarrow\Id$ is the inclusion.
The counit of adjunction 
$\varepsilon:\varphi^*(?)^G\varphi_*\rightarrow \Id$ 
is the composite
\[
\varphi^*(?)^G\varphi_*\xrightarrow{\iota} \varphi^*\varphi_*
\xrightarrow{\varepsilon'}\Id,
\]
where $\varepsilon'$ is the counit of the adjunction of $(\varphi^*,\varphi_*)$.
\end{proposition}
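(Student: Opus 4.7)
The plan is to reduce the claim to classical fpqc descent along $\varphi$, after identifying the $G$-equivariant structure on quasi-coherent sheaves with descent data for $\varphi$. First, using the chain of equivalences in (\ref{ten.par}), I would move everything to the fpqc site: it suffices to prove that $\varphi^{*}\colon\Qch_{\Fpqc}(Y)\to\Qch_{\Fpqc}(G,X)$ is an equivalence of categories, where $\Qch_{\Fpqc}(Y)$ is identified with $\Qch_{\Fpqc}(G,Y)$ (equipped with the trivial $G$-action) via $(?)^{G}$.

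Next, I would use Lemma~\ref{principal-mumford.thm} to convert equivariance into descent data. Since $\varphi$ is a principal $G$-bundle, the map $\Phi\colon G\times X\to X\times_{Y}X$, $(g,x)\mapsto(gx,x)$, is an isomorphism and $\varphi$ is qfpqc. Under $\Phi$ the action $a$ and the projection $p_{2}$ on $G\times X$ correspond to the two projections $X\times_{Y}X\rightrightarrows X$; iterating at higher degrees promotes this to an isomorphism of simplicial schemes $B_{G}^{M}(X)\cong\cosk_{0}(\varphi)$. Consequently a $G$-linearization on a quasi-coherent sheaf on $X$ is exactly a descent datum for $\varphi$, and $\Qch_{\Fpqc}(G,X)\cong\Qch_{\Fpqc}(\cosk_{0}(\varphi))$.

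Then I would apply Grothendieck's descent. Lemma~\ref{fpqc-Zariski.thm} provides the equalizer sequence for $\varphi$, and together with the vanishing of higher \v{C}ech cohomology of quasi-coherent sheaves on affine covers in the fpqc site (observed just after Lemma~\ref{fpqc-Zariski.thm}), the standard fpqc descent theorem yields the equivalence $\varphi^{*}\colon\Qch_{\Fpqc}(Y)\to\Qch_{\Fpqc}(\cosk_{0}(\varphi))$. Composing with the previous step and undoing the reduction via (\ref{ten.par}) gives the desired equivalence $\varphi^{*}\colon\Qch(Y)\to\Qch(G,X)$. For $\M\in\Qch(Y)$, the composite $(?)^{G}\circ\varphi_{*}\circ\varphi^{*}\M$ is by construction the kernel of the two parallel arrows in Lemma~\ref{fpqc-Zariski.thm}, so equals $\M$, giving the stated factorization $u'=\iota u$ for the unit; the description of the counit $\varepsilon$ follows from the triangular identities together with essential surjectivity of $\varphi^{*}$.

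The main obstacle I expect is the bookkeeping in the second step: one must verify that $\Phi$ intertwines every face and degeneracy map so that the cocycle condition on descent data really matches the associativity axiom of a $G$-linearization, carried out through the three \v{C}ech degrees that enter $B_{G}^{M}$. Once this matching is in hand, the remaining two steps are formal consequences of Lemma~\ref{fpqc-Zariski.thm} and the equivalences of (\ref{ten.par}), with no further geometric input required.
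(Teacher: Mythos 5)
Your overall strategy---identify $G$-linearizations with descent data via $\Phi:G\times X\xrightarrow{\sim}X\times_YX$ and then invoke descent---is the same mathematics that underlies the paper's proof, but the division of labor is different: the paper simply cites Vistoli (4.23) and (4.46) for the fact that $\varphi^*:\Qch(Y)\to\Qch(G,X)$ is an equivalence, and spends essentially all of its proof verifying the two triangle identities for the \emph{specific} unit $u$ (induced by $u'$) and counit $\varepsilon=\varepsilon'\circ\iota$, using that $\iota:(?)^G\hookrightarrow\Id$ is a monomorphism; once $(\varphi^*,(?)^G\varphi_*)$ is an adjoint pair and $\varphi^*$ is an equivalence, everything else follows formally. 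You invert this: you rederive the equivalence and treat the adjunction data briefly. That part of your plan is workable, though the second triangle identity (the one for $(?)^G\varphi_*$) is the less trivial of the two and deserves the diagram the paper supplies rather than an appeal to ``the triangular identities.''

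The concrete gap is in your third step. Since $G$ is not assumed flat over $S$ in this proposition, the principal bundle $\varphi$ is qfpqc but need not be flat, so $\{\varphi\}$ is not an fpqc covering and ``the standard fpqc descent theorem'' does not apply to it verbatim. Lemma~\ref{fpqc-Zariski.thm} is stated for qfpqc morphisms and does give you the equalizer sequence, hence full faithfulness of $\varphi^*$ and the identification of the unit; but the \emph{effectivity} of descent data along the possibly non-flat $\varphi$ (essential surjectivity of $\varphi^*$ onto $\Qch_{\Fpqc}(\cosk_0(\varphi))$) is exactly the point that needs an extra argument---either that the qfpqc pretopology has the same saturation as the fpqc one, so that a stack for fpqc is a stack for qfpqc, or Vistoli's torsor argument exploiting that $\varphi$ becomes a trivial bundle (hence admits a section) after an honest fpqc base change $Y'\to Y$. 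The \v{C}ech-vanishing remark following Lemma~\ref{fpqc-Zariski.thm} does not supply this; it concerns higher cohomology of quasi-coherent sheaves on affines and plays no role here. This is precisely the content of Vistoli (4.46), which the paper cites; without naming one of these two repairs your reduction to ``standard fpqc descent'' does not go through as written.
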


\begin{proof}
$(\varphi^*,(?)^G\varphi_*)$ is an adjoint pair.
Indeed, 
the composite
\[
\varphi^*\xrightarrow{\varphi^* u}
\varphi^*(?)^G\varphi_*\varphi^*
\xrightarrow{\varepsilon\varphi^*}
\varphi^*
\]
is the identity almost by definition and the equality $(\varepsilon'\varphi^*)
\circ(\varphi^*u')=\id$.
Similarly, the composite
\[
(?)^G\varphi_*\xrightarrow{u(?)^G\varphi_*}
(?)^G\varphi_*\varphi^*(?)^G\varphi_*
\xrightarrow{(?)^G\varphi_*\varepsilon}
(?)^G\varphi_*
\]
is the identity, since $\iota$ is a monomorphism, and
the diagram
\[
\xymatrix{
(?)^G\varphi_* \ar[r]^-u \ar[d]^\iota &
(?)^G\varphi_*\varphi^*(?)^G\varphi_* \ar[d]^\iota \ar[r]^-\varepsilon &
(?)^G\varphi_* \ar[d]^\iota \\
\varphi_* \ar[r]^-u \ar[dr]^u 
\ar `d[ddr] `r[rr]^\id [rr]
& 
(?)^G\varphi_*\varphi^*\varphi_* \ar[ur]^\varepsilon  \ar[d]^\iota &
\varphi_* \\
 & \varphi_*\varphi^*\varphi_* \ar[ur]^\varepsilon & \\
 &  &
}
\]
is commutative.
As $(\varphi^*,(?)^G\varphi_*)$ is an adjoint pair and 
$\varphi^*$ is an equivalence by \cite[(4.23), (4.46)]{Vistoli},
we are done.
\end{proof}

Similarly, considering the stack of the all modules $\Mod_{\Fpqc,\kappa}/S$
(it is a stack, as in \cite[(4.11)]{Vistoli})
over $(\Sch/S)_\kappa$ with the fpqc topology, 
we have a similar result.

\begin{lemma}\label{basic2.thm}
Let $\varphi:X\rightarrow Y$ be a principal $G$-bundle.
Then the inverse image $\varphi^*: \Mod_{\Fpqc}(Y)\rightarrow \EM_{\Fpqc}(G,X)$ 
is an equivalence.
The functor $(?)^G\circ \varphi_*$ is its quasi-inverse.
The unit of adjunction $u:\Id \rightarrow (?)^G\varphi_*\varphi^*$ is the
map induced by the usual map $\Id \rightarrow\varphi_*\varphi^*$.
The counit of adjunction $\varepsilon:\varphi^*(?)^G\varphi_*\rightarrow
\Id$ is the composite
\[
\varphi^*(?)^G\varphi_*
\rightarrow
\varphi^*\varphi_*
\rightarrow
\Id.
\]
Under this equivalence, $\Qch_{\Fpqc}(Y)$ corresponds to $\Qch_{\Fpqc}(G,X)$.
\end{lemma}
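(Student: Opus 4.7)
The plan is to imitate the proof of Proposition~\ref{basic.thm} word-for-word, substituting the appeal to quasi-coherent descent by the stack property of the category of all fpqc modules. First I verify that $(\varphi^{*},(?)^{G}\varphi_{*})$ is an adjoint pair between $\Mod_{\Fpqc}(Y)$ and $\EM_{\Fpqc}(G,X)$. The pair $(\varphi^{*},\varphi_{*})$ is adjoint on the underlying ringed sites, and $(?)^{G}$ is right adjoint to the forgetful inclusion of $G$-trivial objects into $G$-equivariant ones. The triangular identities then follow from exactly the same diagram chase as in Proposition~\ref{basic.thm}: only the monomorphicity of $\iota:(?)^{G}\hookrightarrow\Id$ and the naturality of the $(\varphi^{*},\varphi_{*})$-unit and counit are invoked, neither of which depends on quasi-coherence.

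The core of the argument is to show that $\varphi^{*}:\Mod_{\Fpqc}(Y)\rightarrow \EM_{\Fpqc}(G,X)$ is an equivalence. Since $\Mod_{\Fpqc,\kappa}/S$ is a stack over $(\Sch/S)_{\kappa}$ by \cite[(4.11)]{Vistoli} and $\varphi$ is qfpqc, fpqc descent data along $\varphi$ for an object of $\Mod_{\Fpqc}(X)$ are the same thing as objects of $\Mod_{\Fpqc}(Y)$. The isomorphism $\Phi:G\times X\rightarrow X\times_{Y}X$, $(g,x)\mapsto(gx,x)$, supplied by Lemma~\ref{principal-mumford.thm} then identifies this category of descent data with $\EM_{\Fpqc}(G,X)$: a $G$-linearization $\rho:a^{*}\M\cong p_{2}^{*}\M$ satisfies the cocycle condition of a $G$-action if and only if the transported isomorphism $p_{1}^{*}\M\cong p_{2}^{*}\M$ on $X\times_{Y}X$ satisfies the descent cocycle. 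Composing the two equivalences yields the desired one, and the explicit descriptions of the unit and counit are then forced by uniqueness of adjoints together with the description of $(?)^{G}$ as the equalizer of $u$ and $\rho u$.

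Finally, to identify the subcategories of quasi-coherent sheaves, I would argue that inverse image preserves quasi-coherence by (\ref{inverse-image.par}), giving one inclusion. Conversely, if $\varphi^{*}\M\in\Qch_{\Fpqc}(G,X)$, then $\M\cong(?)^{G}\varphi_{*}\varphi^{*}\M$ is quasi-coherent by Lemma~\ref{fpqc-Zariski.thm} and Lemma~\ref{inv.thm}, which together say that quasi-coherence descends along the qfpqc morphism $\varphi$.

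The main obstacle will be matching rigorously a $G$-equivariant structure in the sense of $\EM_{\Fpqc}(G,X)=\EM(\Fpqc(B_{G}^{M}(X)))$ with an fpqc descent datum along $\varphi$. One must unwind both cocycle conditions and verify they coincide under $\Phi$; this is essentially the simplicial comparison between $B_{G}^{M}(X)$ and $\cosk_{0}(\varphi)$ in low degree, and it is the point where the principal bundle hypothesis actually enters. Once this bookkeeping is finished, the equivalence, the formulas for unit and counit, and the restriction to the quasi-coherent subcategories all fall out formally.
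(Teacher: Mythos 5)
Your proposal follows the paper's own route: the paper likewise deduces the equivalence from the stack property of $\Mod_{\Fpqc,\kappa}/S$ over $(\Sch/S)_\kappa$ with the fpqc topology (Vistoli (4.11)), with the adjunction verified exactly as in Proposition~\ref{basic.thm}, and then settles the statement about the quasi-coherent subcategories by the local nature of quasi-coherence. The only small discrepancy is your appeal to Lemma~\ref{fpqc-Zariski.thm} and Lemma~\ref{inv.thm} for the converse direction of the last assertion, where the paper simply observes that quasi-coherence on the fpqc site is by definition an fpqc-local condition and hence descends along the qfpqc morphism $\varphi$; your cited lemmas concern the Zariski-site notion and an already-quasi-coherent $\M$, so the direct locality argument is the cleaner one.
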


\begin{proof}
We only need to prove the last assertion.
By definition, the quasi-coherence is local.
So $\varphi^*\N$ is locally quasi-coherent for $\N\in\Qch_{\Fpqc}(Y)$.
The equivariance of $\varphi^*\N$ is trivial, so $\varphi^*\N\in
\Qch_{\Fpqc}(G,X)$.
Conversely, if $\varphi^*\N$ is quasi-coherent, then $\N$ is quasi-coherent
simply by the local nature of quasi-coherence.
\end{proof}

\section{Enriched Grothendieck's descent}
\label{e-gro.sec}

\paragraph Let $S$ be a scheme, and $G$ an $S$-group scheme.

\begin{lemma}\label{universal-gq.thm}
  A principal $G$-bundle $\varphi:X\rightarrow Y$ 
  is a universal geometric quotient
  in the sense of \cite{GIT}.
  In particular, it is a categorical quotient, and hence is uniquely
  determined only by the $G$-scheme $X$.
\end{lemma}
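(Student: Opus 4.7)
The plan is to verify the three defining conditions of a geometric quotient in the sense of \cite{GIT}---the orbit/surjectivity condition, the topological submersiveness, and the structure-sheaf identification $\O_Y \xrightarrow{\sim} (\varphi_*\O_X)^G$---check that each persists under arbitrary base change, and then invoke the standard fact that a universal geometric quotient is a categorical quotient. For the first two conditions, I would apply Lemma~\ref{principal-mumford.thm} with $N=G$: the morphism $\varphi$ is qfpqc and $G$-invariant, and the orbit map $\Phi:G\times X\to X\times_Y X$, $(g,x)\mapsto(gx,x)$, is an isomorphism. By Lemma~\ref{qfpqc-basic.thm}, $\varphi$ is moreover surjective and submersive, which together cover the GIT orbit condition and the topological condition that $U\subset Y$ is open exactly when $\varphi^{-1}(U)$ is.

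For the structure-sheaf condition, the key input is Proposition~\ref{basic.thm}, which asserts that $\varphi^*:\Qch(Y)\to\Qch(G,X)$ is an equivalence with quasi-inverse $(?)^G\circ\varphi_*$, whose unit of adjunction is the map induced by the ordinary unit $\id\to\varphi_*\varphi^*$. Applied to $\O_Y\in\Qch(Y)$, this unit furnishes the desired isomorphism $\O_Y\cong(\varphi_*\varphi^*\O_Y)^G=(\varphi_*\O_X)^G$.

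For universality, the defining local triviality of a principal $G$-bundle is stable under pullback: if an fpqc morphism $Y''\to Y$ trivializes $\varphi$, then for any $Y'\to Y$ the base change $Y''\times_Y Y'\to Y'$ is again fpqc and trivializes $\varphi':X\times_Y Y'\to Y'$, so $\varphi'$ is again a principal $G$-bundle and the three-step argument applies verbatim. The final sentence of the lemma follows from the standard implication \cite[Proposition~0.1]{GIT} that every geometric quotient is a categorical quotient, and categorical quotients are unique up to unique isomorphism. The main subtlety will be the structure-sheaf step: in our generality (no flatness, quasi-compactness, or finite-type hypothesis on $\varphi$), the pushforward $\varphi_*\O_X$ on the small Zariski site need not be quasi-coherent, so a naive computation of invariants is not available. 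Proposition~\ref{basic.thm}, combined with the site-theoretic comparisons of Section~\ref{gro.sec}, is precisely what allows the identification to be made cleanly inside $\Qch(Y)$.
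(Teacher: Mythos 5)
Your proposal is correct and follows essentially the same route as the paper: reduce universality to the stability of principal bundles under base change, get the structure-sheaf isomorphism $\O_Y\cong(\varphi_*\O_X)^G$ from the unit of adjunction in Proposition~\ref{basic.thm}, submersiveness from Lemma~\ref{qfpqc-basic.thm}, and surjectivity of $\Phi$ from the isomorphism in Lemma~\ref{principal-mumford.thm}. The extra remarks on why the equivalence of categories is needed (rather than a naive computation of invariants) are a useful elaboration but do not change the argument.
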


\begin{proof}
  As a base change of a principal $G$-bundle is again a principal $G$-bundle,
  it is enough to show that a principal $G$-bundle is a geometric quotient.

  Let $\varphi:X\rightarrow Y$ be a principal $G$-bundle.
  By Proposition~\ref{basic.thm}, $u:\O_Y\rightarrow (\varphi_*\O_X)^G$ is
  an isomorphism.
  By Lemma~\ref{qfpqc-basic.thm}, {\bf 4}, $\varphi$ is 
  submersive.
  By assumption, $\varphi$ is $G$-invariant.
The map
  $\Phi:G\times X\rightarrow X\times_Y X$ is surjective,
since it is an isomorphism by Lemma~\ref{principal-mumford.thm}.
\end{proof}

\paragraph
From now on, until the end of this paper, 
let $f: G\rightarrow H$ be a qfpqc homomorphism
of $S$-group schemes, and $N=\Ker f$.
Note that $N$ is a normal subgroup scheme of $G$.

\paragraph\label{f-is-principal.par}
Note that 
$\Phi:N\times G\rightarrow G\times_H G$ given by $\Phi(n,g)=(ng,g)$ 
is an isomorphism, since $\Psi:G\times_H G\rightarrow N\times G$ given
by $\Psi(g,g_1)=(gg_1^{-1},g_1)$ is its inverse.
So $f$ is a $G$-enriched principal $N$-bundle by
Lemma~\ref{principal-mumford.thm}.

\begin{lemma}
  Let $X$ be a $G$-scheme on which $N$ acts trivially.
  Then there is a unique action $a'_X:H\times X\rightarrow X$ such that
  the diagram
  \begin{equation}\label{commutative.eq}
    \xymatrix{
      G\times X \ar[r]^-{a_X} \ar[d]^{f\times 1_X} & X \ar[d]^{1_X} \\
      H\times X \ar[r]^-{a'_X} & X
    }
  \end{equation}
  is commutative, where $a_X:G\times X\rightarrow X$ is the given action.
\end{lemma}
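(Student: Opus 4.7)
The plan is to descend $a_X$ along the qfpqc morphism $f\times 1_X:G\times X\rightarrow H\times X$. Uniqueness of $a'_X$ is immediate: $f$ is qfpqc, so its base change $f\times 1_X$ is qfpqc by Lemma~\ref{qfpqc-basic.thm}, \textbf{2}, and hence an epimorphism by Lemma~\ref{fpqc-equal.thm}. Any two morphisms $H\times X\rightrightarrows X$ whose pull-backs to $G\times X$ agree with $a_X$ must coincide.

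For existence, first recall from (\ref{f-is-principal.par}) that $f:G\rightarrow H$ is a $G$-enriched principal $N$-bundle. Since $N=\Ker f$ acts trivially on $H$ and, by hypothesis, trivially on $X$, the group $N$ acts trivially on $H\times X$. Applying Lemma~\ref{principal-base-change.thm} to the $G$-morphism $p_1:H\times X\rightarrow H$, the projection $G\times_H(H\times X)\cong G\times X\rightarrow H\times X$, which is precisely $f\times 1_X$, is a $G$-enriched principal $N$-bundle. By Lemma~\ref{universal-gq.thm}, $f\times 1_X$ is a (universal) categorical quotient for the $N$-action it carries; here the induced $N$-action on $G\times X$ is $n\cdot(g,x)=(ng,x)$ because $N$ acts trivially on the second factor.

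Next, I verify that $a_X$ is $N$-invariant for this action. The two composites
\[
N\times(G\times X)\rightrightarrows G\times X\xrightarrow{a_X} X
\]
send $(n,g,x)$ respectively to $(ng)\cdot x=n\cdot(g\cdot x)$ and to $g\cdot x$; these agree because $N$ acts trivially on $X$ and $g\cdot x\in X$. By the categorical quotient property, $a_X$ factors through $f\times 1_X$ via a unique morphism $a'_X:H\times X\rightarrow X$ making (\ref{commutative.eq}) commute.

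It remains to check the action axioms for $a'_X$. The unit axiom follows by composing with $(e_G,1_X):X\rightarrow G\times X$, since $(f\times 1_X)(e_G,x)=(e_H,x)$ and $a_X(e_G,x)=x$. For associativity, one compares the two composites $H\times H\times X\rightarrow X$ after pulling back along $f\times f\times 1_X:G\times G\times X\rightarrow H\times H\times X$, which is qfpqc as a twofold base change of $f$. Both pulled-back composites equal the associativity composite for $a_X$ on $G\times G\times X$, hence coincide; since $f\times f\times 1_X$ is an epimorphism by Lemma~\ref{fpqc-equal.thm}, the original composites on $H\times H\times X$ coincide. The main obstacle is simply being careful that the $N$-action on $G\times X$ produced by the base-change bundle structure is the one I have just described, so that the triviality of the $N$-action on $X$ really does give $N$-invariance of $a_X$; once this is set up, descent along the qfpqc epimorphisms $f\times 1_X$ and $f\times f\times 1_X$ handles everything uniformly.
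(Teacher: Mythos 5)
Your proposal is correct and follows essentially the same route as the paper: realize $f\times 1_X$ as a principal $N$-bundle by base change, use that it is a (universal) geometric, hence categorical, quotient (Lemma~\ref{universal-gq.thm}) to descend the $N$-invariant morphism $a_X$, and then verify the action axioms by precomposing with the qfpqc epimorphisms $f\times 1_X$ and $f\times f\times 1_X$ via Lemma~\ref{fpqc-equal.thm}. The only cosmetic differences are that you obtain uniqueness directly from the epimorphism property rather than from the categorical-quotient property, and that you spell out the induced $N$-action on $G\times X$ explicitly.
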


\begin{proof}
  Let $X'$ be the $S$-scheme $X$ with the trivial $G$-action.
  Then being a base change of $f$, $f\times 1_X: G\times X'\rightarrow
  H\times X'$ is a geometric quotient under the action of $N$ by
  Lemma~\ref{universal-gq.thm}.
  In particular, it is a categorical quotient under the action of $N$
by \cite[Proposition~0.1]{GIT}.
  As $a_X:G\times X'\rightarrow X$ is an $N$-invariant morphism, 
  there is a unique morphism $a'_X: H\times X\rightarrow X$ such that
  the diagram (\ref{commutative.eq}) is commutative.

  We compare the two maps $a'_X\circ (\mu_H\times 1_X)$ and $a'_X\circ
  (1_H\times a'_X)$ from $H\times H\times X$ to $X$.
  When we compose $f\times f\times 1_X : G\times G\times X
  \rightarrow H\times H\times X$ to the right, they agree by
  the commutativity of (\ref{commutative.eq}) and the facts that
  $f$ is a homomorphism, and that $a_X$ is an action.
  By Lemma~\ref{fpqc-equal.thm}, the two maps agree.
  It is easy to see that
  \[
  X\cong S\times X\xrightarrow{u_H\times 1_X} H\times X\xrightarrow{a_X'} X
  \]
  is the identity, where $u_H:S\rightarrow H$ is the unit element.
  Thus $a_X'$ is an action of $H$ on $X$.
\end{proof}

\paragraph
Conversely, if $a'_X:H\times X\rightarrow X
$ is a given $H$-action on $X$, then
defining $a_X=a_X'(f\times 1_X)$ (that is, in a unique way so that
the diagram (\ref{commutative.eq}) is commutative), 
$a_X:G\times X\rightarrow X$ is an action, and $N$ acts trivially on $X$.
From now on, we identify an $H$-scheme and a $G$-scheme on which 
$N$ acts trivially.

\begin{lemma}
Let $\varphi:X\rightarrow Y$ be a $G$-morphism which is also a
geometric quotient under the action of $N$.
Let $U$ be a $G$-stable open subset of $X$.
Then 
\begin{enumerate}
\item[\bf 1] $U=\varphi^{-1}(\varphi(U))$.
\item[\bf 2] $\varphi(U)$ is a $G$-stable open subset of $Y$.
\item[\bf 3] $\varphi|_U: U\rightarrow \varphi(U)$ is a base change of
$\varphi$ by a $G$-stable open immersion.
In particular, it is a $G$-morphism which is a geometric quotient under
the action of $N$.
If, moreover, $\varphi$ is a principal $N$-bundle (resp.\ trivial $N$-bundle),
then so is $\varphi|_U:U\rightarrow \varphi(U)$.
\end{enumerate}
\end{lemma}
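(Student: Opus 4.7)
The plan is to deduce \textbf{1} from the orbit-fibre characterisation of a geometric quotient, \textbf{2} from submersivity together with the $G$-equivariance of $\varphi$, and \textbf{3} by recognising $\varphi|_U$ as a base change along the open immersion $\varphi(U)\hookrightarrow Y$.

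For \textbf{1}, a geometric quotient under $N$ has by definition the map $\Phi:N\times X\rightarrow X\times_Y X$, $(n,x)\mapsto(nx,x)$, surjective; equivalently, the set-theoretic fibres of $\varphi$ are precisely the $N$-orbits in $X$. Because $U$ is $G$-stable and $N\subset G$, $U$ is in particular $N$-stable, hence a union of $N$-orbits, and therefore $\varphi^{-1}(\varphi(U))=U$. For \textbf{2}, $\varphi$ is submersive (again by definition of a geometric quotient), so \textbf{1} together with the openness of $U$ shows that $\varphi(U)$ is open in $Y$. The $G$-equivariance of $\varphi$ reads $a_Y\circ(1_G\times\varphi)=\varphi\circ a_X$, whence
\[
a_Y(G\times\varphi(U))=\varphi(a_X(G\times U))\subset\varphi(U)
\]
by $G$-stability of $U$; the openness of $\varphi(U)$ upgrades this set-theoretic containment to a scheme-theoretic factorisation $G\times\varphi(U)\rightarrow\varphi(U)$, so $\varphi(U)$ is a $G$-stable open subscheme of $Y$.

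For \textbf{3}, write $j:\varphi(U)\hookrightarrow Y$ for the inclusion, which is a $G$-stable open immersion by \textbf{2}. By \textbf{1}, the fibre product $X\times_Y\varphi(U)$ is canonically identified with the open subscheme $\varphi^{-1}(\varphi(U))=U$ of $X$, and under this identification $\varphi|_U$ is precisely the base change of $\varphi$ along $j$. The remaining assertions then follow from standard stabilities of the relevant properties under this (open, hence flat) base change: being a $G$-morphism is formal; being a principal $N$-bundle or a trivial $N$-bundle is stable under arbitrary base change by the definition in \cite[(2.6)]{Hashimoto4}; and each of the defining axioms of an $N$-geometric quotient (surjectivity, submersivity, surjectivity of $\Phi$, and the $N$-invariant structure-sheaf condition) is local on $Y$ and hence survives restriction to an open subscheme.

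The only non-formal step is the orbit-fibre identification invoked in \textbf{1}; the subtlety worth keeping an eye on is the passage from set-theoretic to scheme-theoretic $G$-stability in \textbf{2}, for which the openness of $\varphi(U)$ is essential.
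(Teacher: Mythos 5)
Your proof is correct and follows essentially the same route as the paper: the orbit--fibre characterisation coming from the surjectivity of $\Phi:N\times X\rightarrow X\times_YX$ for \textbf{1}, submersivity plus equivariance for \textbf{2}, and the identification $U=X\times_Y\varphi(U)$ for \textbf{3}. The only cosmetic difference is that the paper makes the orbit--fibre step in \textbf{1} precise by passing to geometric points $\xi\rightarrow S$ and arguing by contradiction, whereas you invoke the ``fibres are $N$-orbits'' statement directly; the underlying idea is identical.
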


\begin{proof}
Set $V:=\varphi(U)$.
{\bf 1} We may assume that $G=N$.
If $U\neq \varphi^{-1}(\varphi(U))$, then
there is a geometric point $\xi\rightarrow S$ such that
$\Phi:G(\xi)\times X(\xi)\rightarrow X(\xi)\times_{Y(\xi)}X(\xi)$
is surjective, and $U(\xi)\neq \varphi^{-1}(\varphi(U(\xi)))$.
Then there exists some $x\in U(\xi)$ and $y\in\varphi^{-1}(\varphi(U(\xi)))
\setminus U(\xi)$ such that $\varphi(x)=\varphi(y)$.
There exists some $g\in G(\xi)$ such that $\Phi(g,x)=(gx,x)=(y,x)$.
This contradicts the assumption that $U$ is $G$-stable.

{\bf 2} As $\varphi$ is submersive and $\varphi^{-1}(V)=U$ is open, 
$V$ is open.
As $\varphi\circ a_X=a_Y\circ(1_G\times\varphi)$ as morphisms from
$G\times X$ to $Y$, 
\[
G\cdot V=a_Y(G\times V)=a_Y(1_G\times\varphi)(G\times U)
=\varphi a_X(G\times U)=\varphi(U)=V.
\]
This shows that $V$ is $G$-stable.

{\bf 3} follows from {\bf 1} and {\bf 2}.
\end{proof}

\paragraph\label{last-setting.par}
Until the end of this section, 
we fix a regular cardinal $\kappa$ sufficiently large so that
the structure morphism $G\rightarrow S$ is a $\kappa$-morphism (such a 
$\kappa$ exists).

Let $X$ be an $H$-scheme.
We want to show that the category $\Qch(H,X)$ of 
quasi-coherent $(H,\O_X)$-modules and
the category $\Qch_N(G,X)$ of quasi-coherent $(G,\O_X)$-modules on which 
$N$ acts trivially are equivalent.
Similarly, we want to show that $\EM_{\Fpqc}(H,X)$ is
equivalent to the category of $N$-trivial equivariant $(G,\O_X)$-modules
$\EM_{N,\Fpqc}(G,X)$.

For the purpose above, we need the notion of $G$-linearized $\O_X$-modules
by Mumford \cite{GIT} (in the fpqc topology).

Let $F$ be an $S$-group scheme, and
$Z$ be an $F$-scheme.
An 
$F$-linearized $\O_Z$-module is a pair $(\M,\rho)$ such that
$\M\in\Mod_{\Fpqc}(Z)$, 
and $\rho:a^*\M\rightarrow p_2^*\M$
is an isomorphism such that the diagram
\[
\xymatrix{
  (\mu\times 1)^* a^*\M \ar[r]^\rho \ar[d]_\cong &
  (\mu\times 1)^* p_2^*\M \ar[r]^-\cong & p_{23}^*p_2^*\M \\
  (1\times a)^*a^*\M \ar[r]^\rho & 
  (1\times a)^*p_2^*\M \ar[r]^-\cong & p_{23}^*a^*\M \ar[u]_\rho
}
\]
is commutative, where $\mu:F\times F\rightarrow F$ is the product,
$a:F\times Z\rightarrow Z$ is the action, $p_2:F\times Z\rightarrow 
Z$ is the second projection, and $p_{23}:F\times F\times Z
\rightarrow F\times Z$ is given by $p_{23}(g,g',z)=(g',z)$.
When we need not mention $\rho$, sometimes we say that $\M$ is
an $F$-linearized $\O_Z$-module.
$\rho$ is called the $F$-linearization of $\M$.
Let $\M$ and $\N$ be $F$-linearized $\O_Z$-modules.
We say that $\phi:\M\rightarrow\N$ is a map of $F$-linearized $\O_Z$-modules
if $\phi$ is $\O_Z$-linear, and the diagram
\[
\xymatrix{
  a^*\M \ar[r]^-{a^*\phi} \ar[d]^\rho & a^*\N \ar[d]^\rho \\
  p_2^*\M \ar[r]^-{p_2^*\phi} & p_2^*\N
}
\]
is commutative.
The category $\LM_{\Fpqc}(F,Z)$ of $F$-linearized $\O_Z$-modules 
(in the fpqc topology) is obtained.
For $\M\in\EM_{\Fpqc}(F,Z)$, when we define $\rho:a^*\M_0\rightarrow p_2^*\M_0$ 
to be the composite
\[
a^*\M_0\xrightarrow{\alpha_{\delta_0}} \M_1 
\xrightarrow{\alpha_{\delta_1}^{-1}}p_2^*\M_0,
\]
then $(\M_0,\rho)$ is an $F$-linearized $\O_Z$-module, and we get
an equivalence $\EM_{\Fpqc}(F,Z)\rightarrow \LM_{\Fpqc}(F,Z)$.
We identify an equivariant $(F,\O_Z)$-module and an $F$-linearized
$\O_Z$-module.

We say that $(\M,\rho)\in \LM_{\Fpqc}(F,Z)$ is quasi-coherent if
$\M$ is so.
The full subcategory of $\LM_{\Fpqc}(F,Z)$ consisting of quasi-coherent 
$F$-linearized $\O_Z$-modules is denoted by $\LQ_{\Fpqc}(F,Z)$.
The equivalence $\EM_{\Fpqc}(F,Z)\rightarrow 
\LM_{\Fpqc}(F,Z)$ induces an equivalence
$\Qch_{\Fpqc}(F,Z)\rightarrow\LQ_{\Fpqc}(F,Z)$.

\paragraph
Let $F$ be as above, and let $\varphi:X\rightarrow Y$ be an $F$-morphism.
For $\M\in\LM_{\Fpqc}(F,Y)$, $\M\in\EM_{\Fpqc}(F,Y)$. So 
$\varphi^*\M\in\EM_{\Fpqc}(F,X)$.
So $\varphi^*\M$ has a structure of $F$-linearized $\O_X$-module.
It is easy to check that the $F$-linearization is given by the composite
\[
a_X^*\varphi^*\M\cong (1_G\times\varphi)^*a_Y^*\M
\xrightarrow{\rho}
(1_G\times\varphi)^*(p_2)_Y^*\M
\cong
(p_2)_X^*\varphi^*\M.
\]

\paragraph
Assume that $F\rightarrow S$ is a $\kappa$-morphism.
For $\N\in \EM_{\Fpqc}(F,X)$,
we have $\varphi_*\N\in\EM_{\Fpqc}(F,Y)$ 
(this is proved similarly to \cite[(7.14)]{ETI}).
So $\varphi_*\N$ has a structure of an $F$-linearized $\O_Y$-module.
The $F$-linearization is given by
\[
a_Y^*\varphi_*\N \xrightarrow{\theta}
(1_G\times \varphi)_*a_X^*\N\xrightarrow{\rho}
(1_G\times \varphi)_*(p_2)_X^*\N\xrightarrow{\theta^{-1}}
(p_2)_Y^*\varphi_*\N,
\]
where $\theta$ is Lipman's theta \cite[(1.21)]{ETI},
which is an isomorphism (this is true because we are considering the
fpqc site, see Lemma~\ref{theta.thm}).

\paragraph \label{res.par}
Let $h:F\rightarrow F'$ be a homomorphism of $S$-group schemes.
Let $Z$ be an $F'$-scheme.
Then $Z$ is an $F$-scheme in an obvious way, and we get a map
$B_h^M(Z): B_F^M(Z)\rightarrow B_{F'}^M(Z)$ defined as
\[
\xymatrix{
  B_F^M(Z): &F\times F\times Z \ar[d]_{h\times h\times 1_Z}
  \ar@<.75em>[r]
  \ar[r]
  \ar@<-.75em>[r] &
  F\times Z \ar[d]^{h\times 1_Z} 
  \ar@<.5em>[r]
  \ar@<-.5em>[r] &
  Z \ar[d]^{1_Z} \\
  B_{F'}^M(Z): & F'\times F'\times Z
  \ar@<.75em>[r]
  \ar[r]
  \ar@<-.75em>[r] &
  F'\times Z
  \ar@<.5em>[r]
  \ar@<-.5em>[r] &
  Z 
}.
\]

For the definition of $B_F^M(Z)$, see \cite[Chapter~29]{ETI}.
This induces the pull-back $B_h^M(Z)^*:\EM(F',Z)\rightarrow \EM(F,Z)$
by \cite[(7.22)]{ETI}.
We denote this functor by $\res^{F'}_F$.
Similarly, $\res^{F'}_{F,\Fpqc}=B_h^M(Z)^*:\EM_{\Fpqc}(F',Z)\rightarrow
\EM_{\Fpqc}(F,Z)$ is defined.

\paragraph Corresponding to $\res^{F'}_F$, we have a functor
$r^{F'}_F:\LM(F',Z)\rightarrow \LM(F,Z)$.
$r^{F'}_F(\M,\rho)=(\M,r(\rho))$, where 
$r(\rho)$ is the composite
\[
a_F^*\M\cong (h\times 1_X)^* a_{F'}^*\M\xrightarrow {(h\times 1_X)^*\rho}
(h\times 1_X)^* (p_2)_{F'}^*\M\cong (p_2)_{F}^*\M.
\]
Similarly, $r^{F'}_{F,\Fpqc}:\LM_{\Fpqc}(F',Z)\rightarrow \LM_{\Fpqc}(F,Z)$ is
obtained.

\paragraph
Let $F$ be an $S$-group scheme, and 
$X$ an $F$-scheme on which $F$-acts trivially.
It is easy to see that $\M\in\Mod(G,X)$ (resp.\ $\M\in\Mod_{\Fpqc}(G,X)$) is 
$F$-trivial if and only if 
$\M\cong \res^{\{e\}}_F \M_0$ for some $\M_0\in\Mod(X)=\Mod(\{e\},X)$
(resp.\  $\M_0\in\Mod_{\Fpqc}(X)=\Mod_{\Fpqc}(\{e\},X)$), where 
$\{e\}$ is the trivial group scheme.

\paragraph 
Let $X$ be an $H$-scheme.
By restriction, $\M=\res^H_G\M$ is an equivariant 
$(G,\O_X)$-module for an equivariant $(H,\O_X)$-module $\M$.
Note that $\res_N^G\res_G^H \M\cong \res^H_N\M\cong 
\res_N^{\{e\}}\res_{\{e\}}^H\M$ is $N$-trivial.
Let $\EM_N(G,X)$ (resp.\ $\Qch_N(G,X)$) 
denote the category of $N$-trivial equivariant (resp.\ quasi-coherent)
$(G,\O_X)$-modules.
Then we have a faithful exact functor 
$\res^H_G:\EM(H,X)\rightarrow
\EM_N(G,X)$.
The category of $N$-trivial $G$-linearized $\O_X$-modules
(resp.\ quasi-coherent $\O_X$-modules) is denoted by
$\LM_N(G,X)$ (resp.\ $\LQ_N(G,X)$).
Thus $\EM_N(G,X)$ is equivalent to $\LM_N(G,X)$ and
$\Qch_N(G,X)$ is equivalent to $\LQ_N(G,X)$.
Similar definitions can be done for the fpqc site.

\paragraph
Let $X$ be an $H$-scheme.
Let $(\M,\rho)\in\LM_N(G,X)$.
Thus $\M$ is an $\O_X$-module, and 
$\rho:a_G^*\M\rightarrow (p_2)_G^*\M$ is a $G$-linearization,
where $a_G:G\times X\rightarrow X$ and $(p_2)_G:G\times X\rightarrow X$
are the action and the second projection, respectively.

Note that $N$ acts on $G\times X$ by 
$\alpha:=(\mu\times 1_X)\circ(\iota\times 1\times 1):N\times G\times X
\rightarrow G\times X$, where $\iota:N\rightarrow G$ is the inclusion.
That is, $n(g,x)=(ng,x)$.
We define $q:N\times G\times X\rightarrow G\times X$ by 
$q(n,g,x)=(g,x)$.
Note also that
$f\times 1_X:G\times X\rightarrow H\times X$ is a principal 
$N$-bundle.
Thus $a_G^*\M\cong (f\times 1_X)^*a_H^*\M$ and
$(p_2)_G^*\M\cong (f\times 1_X)^*(p_2)_H^*\M$ have structures of 
$(N,\O_{G\times X})$-modules.
The $N$-linearization $\eta$ of $a_G^*\M$ is given by
\[
\alpha^*a_G^*\M\cong P^*\M \cong q^*a_G^*\M,
\]
where $P(n,g,x)=gx$.
The $N$-linearization $\zeta$ of $(p_2)_G^*\M$ is given similarly.

\begin{lemma}\label{crutial.thm}
  Let the notation be as above.
  Then $\rho:a^*\M\rightarrow p_2^*\M$ is a map of $N$-linearized 
  $\O_{G\times X}$-modules, where $a=a_G$ and $p_2=(p_2)_G$.
  A similar result holds for the fpqc site.
\end{lemma}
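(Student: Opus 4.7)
The plan is to derive the $N$-equivariance of $\rho$ by pulling back the cocycle condition for the $G$-linearization $\rho$ along the inclusion $j:=\iota\times 1_G\times 1_X:N\times G\times X\rightarrow G\times G\times X$, and then using crucially the hypothesis that $\M$ is $N$-trivial. A direct computation shows
\[
(\mu\times 1_X)\circ j=\alpha,\qquad p_{23}\circ j=q,\qquad (1_G\times a)\circ j=r,
\]
where $r(n,g,x)=(n,gx)$; note that $r$ factors through $N\times X$ as $r=(\iota\times 1_X)\circ(1_N\times a_G)$.

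Because $\M$ is $N$-trivial, the restriction $(\iota\times 1_X)^*\rho$ of $\rho$ to $N\times X$ is the identity, under the canonical identification of $a_N^*\M$ and $(p_2)_N^*\M$ with $p_2^*\M$ coming from the trivial action of $N$ on $X$. Pulling back by $1_N\times a_G$, it follows that $r^*\rho$ is the identity under the natural identification $r^*a_G^*\M=r^*(p_2)_G^*\M$, which is legitimate because both $a_G\circ r$ and $p_2\circ r$ equal the map $(n,g,x)\mapsto gx$ (again using $N$-triviality of the action on $X$). Now I would pull back the cocycle pentagon defining the $G$-linearization $\rho$ along $j$. Combining the identifications above with $r^*\rho=\id$ and the equalities $a_G\circ\alpha=a_G\circ q$ and $p_2\circ\alpha=p_2\circ q$ (both again valid because $N$ acts trivially on $X$), the pentagon collapses to the single identity $\alpha^*\rho=q^*\rho$.

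To conclude, I would observe that the $N$-linearizations $\eta$ on $a_G^*\M$ and $\zeta$ on $(p_2)_G^*\M$ are, by their very construction in the paragraph preceding the lemma, the canonical isomorphisms arising from $a_G\circ\alpha=a_G\circ q$ and $p_2\circ\alpha=p_2\circ q$. Thus the identity $\alpha^*\rho=q^*\rho$ is exactly the commutativity of the square expressing that $\rho$ is a morphism of $N$-linearized $\O_{G\times X}$-modules. The main technical hurdle is the bookkeeping of the canonical isomorphisms, since the cocycle pentagon involves several identifications of pullbacks along coincident composite maps; but once each arrow in the pulled-back pentagon is matched with the corresponding side of the $N$-equivariance square, the conclusion is formal. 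The argument for the fpqc site is identical, since every step consists of formal manipulations of (iso)morphisms of sheaves under inverse image and uses no property specific to the Zariski topology.
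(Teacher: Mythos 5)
Your proof is correct and takes essentially the same approach as the paper: both pull back the cocycle diagram for $\rho$ along $\iota\times 1_G\times 1_X:N\times G\times X\rightarrow G\times G\times X$, use the $N$-triviality of $\M$ to see that the copy of $\rho$ factoring through $N\times X$ becomes the identity, and then identify the collapsed diagram with the $N$-equivariance square via the canonical isomorphisms $\eta$ and $\zeta$. The bookkeeping of canonical isomorphisms you flag as the main hurdle is exactly what the paper's large landscape diagram carries out.
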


\begin{proof}
  Note that in the category of $\O_{G\times G\times X}$-modules 
  $\Mod(G\times G\times X)$, 
  \[
  \xymatrix{
    (\mu\times 1)^* a^*\M \ar[r]^\rho \ar[d]_\cong &
    (\mu\times 1)^* p_2^*\M \ar[r]^-\cong & p_{23}^*p_2^*\M \\
    (1\times a)^*a^*\M \ar[r]^\rho & 
    (1\times a)^*p_2^*\M \ar[r]^-\cong & p_{23}^*a^*\M \ar[u]_\rho
  }
  \]
  is commutative.
  Pulling back this diagram by $\iota\times 1_G\times 1_X: N\times G
  \times X\rightarrow G\times G\times X$, we get a commutative diagram
  \begin{landscape}
    \[
    \xymatrix{
      %%% 1st line
      ~ & \ar `l[ld] `d[ddddrr] `[dddrr]^\zeta [dddrr]
      \alpha^*a^*\M \ar[d]^\cong \ar[r]^\rho 
      &
      \alpha^*p_2^*\M \ar[r]^\zeta \ar[d]^\cong &
      q^*p_2^*\M \ar[d]^\cong & \\
      %%% 2nd line
      & 
      (\iota\times 1\times 1)^*(\mu\times 1)^*a^*\M \ar[r]^\rho \ar[d]^\cong &
      (\iota\times 1\times 1)^*(\mu\times 1)^*p_2^*\M \ar[r]^\cong &
      (\iota\times 1\times 1)^*p_{23}^*p_2^*\M & \\
      %%% 3rd line
      & (\iota\times 1\times 1)^*(1\times a)^*a^*\M \ar[d]^\cong \ar[r]^\rho &
      (\iota\times 1\times 1)^*(1\times a)^*p_2^*\M \ar[d]^\cong \ar[r]^\cong &
      (\iota\times 1\times 1)^*p_{23}^*a^* \M\ar[d]^\cong \ar[u]_\rho & \\
      %%% 4th line
      & (1\times a)^*(p_2)_N^*\M \ar[r]^{\id} & 
      (1\times a)^*(p_2)_N^*\M &
      q^*a^*\M 
      \ar `r[ruuu] `[uuu]_\rho [uuu]
      & ~\\
      %%% 5th line
      ~ & ~ & ~& ~ & ~
    }.
    \]
  \end{landscape}
  This shows that $\rho:a^*\M\rightarrow p_2^*\M$ is a map of 
  $N$-linearized $\O_{G\times X}$-modules.
\end{proof}

\begin{lemma}\label{NGH.thm}
$\res^H_{G}:\Qch(H,X)\rightarrow \Qch_{N}(G,X)$ 
and
$\res^H_{G,\Fpqc}:\EM_{\Fpqc}(H,X)\rightarrow \EM_{N,\Fpqc}(G,X)$ 
are equivalences.
\end{lemma}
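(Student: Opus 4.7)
The plan is to construct a quasi-inverse using the principal $N$-bundle structure of $f\times 1_X:G\times X\to H\times X$, which is the base change of the principal bundle $f:G\to H$ by Lemma~\ref{principal-base-change.thm} (recalling $N$ acts trivially on $X$ as an $H$-scheme) and is therefore governed by Proposition~\ref{basic.thm} in the Zariski setting and Lemma~\ref{basic2.thm} in the fpqc setting. I will pass through the equivalent descriptions $\Qch(H,X)\simeq\LQ(H,X)$ and $\Qch_N(G,X)\simeq \LQ_N(G,X)$ (and the analogous equivalences for $\EM_{\Fpqc}$ and $\LM_{\Fpqc}$), so that the question becomes: given an $N$-trivial $G$-linearization $\rho:a_G^*\M\to (p_2)_G^*\M$ on an $\O_X$-module $\M$, produce a unique $H$-linearization $\rho':a_H^*\M\to (p_2)_H^*\M$ whose restriction along $f\times 1_X$ recovers $\rho$.

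First I would fix notation and observe that both $a_G^*\M$ and $(p_2)_G^*\M$ are the pullbacks along $f\times 1_X$ of $a_H^*\M$ and $(p_2)_H^*\M$ respectively, and that both of these acquire canonical $N$-linearizations (with $N$ acting on $G\times X$ by left translation on $G$) as pullbacks of modules on the quotient $H\times X$. By Lemma~\ref{crutial.thm}, the map $\rho$ is itself a morphism of $N$-linearized $\O_{G\times X}$-modules. Applying Proposition~\ref{basic.thm} (or Lemma~\ref{basic2.thm} in the fpqc case) to $f\times 1_X$, $\rho$ descends uniquely to a map $\rho':a_H^*\M\to (p_2)_H^*\M$ of $\O_{H\times X}$-modules, which is an isomorphism because $(f\times 1_X)^*$ reflects isomorphisms (being an equivalence).

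Next I would verify the cocycle condition for $\rho'$ on $H\times H\times X$. The morphism $f\times f\times 1_X:G\times G\times X\to H\times H\times X$ is qfpqc (being a base change and composite of qfpqc maps by Lemma~\ref{qfpqc-basic.thm}) and hence its pullback is faithful on quasi-coherent/fpqc modules via the same descent machinery; since pulling back the cocycle diagram for $\rho'$ by $f\times f\times 1_X$ produces exactly the cocycle diagram for $\rho$, which commutes by hypothesis, the cocycle for $\rho'$ commutes. This defines the prospective quasi-inverse $\Psi:\LQ_N(G,X)\to \LQ(H,X)$ (and its fpqc analogue) on objects; functoriality in morphisms is immediate because a morphism $\phi:(\M,\rho_{\M})\to(\N,\rho_{\N})$ in $\LQ_N(G,X)$ is automatically a morphism between the $N$-linearized pullbacks on $G\times X$, and hence descends to an $\O_X$-linear map intertwining $\rho'_{\M}$ and $\rho'_{\N}$.

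Finally, the two compositions $\Psi\circ\res^H_G$ and $\res^H_G\circ\Psi$ are naturally isomorphic to the identities: one direction is tautological since $\res^H_G$ just forgets structure, and the other follows from the uniqueness of descent along $f\times 1_X$ together with the commutativity of diagram~(\ref{commutative.eq}) which identifies $a_G$ with $a_H\circ(f\times 1_X)$. The main obstacle I expect is bookkeeping: unravelling that the $N$-linearizations on $a_G^*\M$ and $(p_2)_G^*\M$ furnished by Lemma~\ref{crutial.thm} genuinely coincide with those one obtains by viewing them as pullbacks from $H\times X$, and checking that the descent of $\rho$ from $G\times X$ produces a map on $H\times X$ whose further pullbacks along the projections match canonically through Lipman's theta (available for pullbacks in $\Bbb C$ by Lemma~\ref{inverse-image-theta.thm} and for pushforwards on the fpqc site by Lemma~\ref{theta.thm}). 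Once these compatibilities are in place, the argument assembles formally into the stated equivalences.
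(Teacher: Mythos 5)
Your proposal is correct and follows essentially the same route as the paper: reduce to linearizations, use Lemma~\ref{crutial.thm} to see that $\rho$ is a morphism of $N$-linearized modules on $G\times X$, descend it along the principal $N$-bundle $f\times 1_X$ via Proposition~\ref{basic.thm} (resp.\ Lemma~\ref{basic2.thm}), and check the cocycle condition and functoriality by faithfulness of the pullback. Your explicit verification of the cocycle condition by pulling back along $f\times f\times 1_X$ fills in a step the paper dispatches with ``by descent, it is easy to verify.''
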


\begin{proof}
The proof is essentially the same, and we prove the assertion only for
the Zariski site.

  It suffices to show that $r^H_{G}:\LQ(H,X)\rightarrow 
\LQ_{N}(G,X)$ is an
  equivalence.

Let $(\M,\rho)\in\LQ_N(G,X)$.
By Lemma~\ref{crutial.thm} and 
  Proposition~\ref{basic.thm} (for the fpqc topology, use 
  Lemma~\ref{basic2.thm}), there exists a unique $\bar\rho:
  (a_H)^*\M\rightarrow (p_2)_H^*\M$ such that the composite
  \[
  a^*\M\cong (f\times 1)^*(a_H)^*\M
  \xrightarrow{\bar\rho}
  (f\times 1)^*(p_2)_H^*\M
  \cong p_2^*\M
  \]
  is $\rho$.
  By descent, it is easy to verify that $\bar\rho$ is an $H$-linearization
  of $\M$.
  As $(f\times 1)^*$ is a faithful functor, we have that
  $Q: \LQ_N(G,X)\rightarrow \LQ(H,X)$ given by 
  $Q(\M,\rho)=(\M,\bar\rho)$ is a functor.
  It is easy to see that $Q$ is a quasi-inverse of $r^H_G$.
\end{proof}

\begin{lemma}\label{complicated.thm}
  Let $F$ be an $S$-group scheme, and $M$ its normal subgroup scheme.
  Let $X$ be an $F$-scheme on which $M$ acts trivially.
Let $F\rightarrow S$ be a $\kappa$-morphism.
Let $\M\in \Mod_{\Fpqc}(F,X)$.
  Then $\M^M$ is the largest $M$-trivial 
  $(F,\O_X)$-submodule of $\M$.
  $(?)^M: \LM_{\Fpqc}(F,X)\rightarrow \LM_{M,\Fpqc}(F,X)$ 
  is a left exact functor.
\end{lemma}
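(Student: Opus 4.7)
The plan is to identify $(?)^M$ with the right adjoint of the fully faithful inclusion $\LM_{M,\Fpqc}(F,X)\hookrightarrow\LM_{\Fpqc}(F,X)$, which automatically supplies left exactness, and to realize this right adjoint concretely as the kernel formula from the paragraph preceding the lemma.

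Write $\iota:M\hookrightarrow F$ for the inclusion and $p:=p_M:M\times X\to X$ for the second projection, which equals the $M$-action as $M$ acts trivially on $X$; set $\rho_M:=(\iota\times 1_X)^*\rho_F$. Then by definition $\M^M=\Ker\bigl((1-\rho_M)u:\M\to p_*p^*\M\bigr)$, where $u$ is the unit of adjunction for $(p^*,p_*)$. The main work is to give $\M^M$ an $F$-linearization making its canonical inclusion into $\M$ a morphism in $\LM_{\Fpqc}(F,X)$. Here normality of $M$ is essential: the conjugation morphism $c:F\times M\to M$, $(g,m)\mapsto gmg^{-1}$, exists, and pulling back the cocycle diagram for $\rho_F$ along $1_F\times\iota\times 1_X:F\times M\times X\to F\times F\times X$ and reinterpreting one factor through $c$ produces a commutative diagram in the spirit of the one used in the proof of Lemma~\ref{crutial.thm}. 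This shows that $(1-\rho_M)u$ underlies a morphism in $\LM_{\Fpqc}(F,X)$ (for the natural $F$-structure on the codomain built via normality), so its kernel $\M^M$ inherits a canonical $F$-linearization, and it is $M$-trivial by construction.

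Maximality is then immediate: any $M$-trivial $(F,\O_X)$-submodule $\N\subseteq\M$ is annihilated by $(1-\rho_M)u$, hence contained in $\Ker((1-\rho_M)u)=\M^M$. These two properties realize $(?)^M:\LM_{\Fpqc}(F,X)\to\LM_{M,\Fpqc}(F,X)$ as the right adjoint to the fully faithful inclusion going the other way, from which left exactness follows formally.

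The principal technical hurdle is the normality-driven diagram chase furnishing the $F$-linearization on $\M^M$. One should resist the tempting shortcut of viewing $p^*\M^M$ as a subsheaf of $p^*\M$, since $M\to S$ is not assumed flat and $p^*$ need not preserve injections; the adjoint-style construction (equivalently, the direct construction of the linearization via the universal property of the kernel) avoids requiring any exactness of pullbacks beyond what is intrinsic to an abelian category.
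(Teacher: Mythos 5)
Your overall strategy coincides with the paper's: take $\M^M=\Ker((1-\rho)u:\M\to p_*p^*\M)$, use normality of $M$ (via the conjugation $(g,m)\mapsto gmg^{-1}$, which in the paper appears as the isomorphism $\Theta(f,m,x)=(fmf^{-1},f,x)$) to run a cocycle-diagram chase showing that $\rho$ is compatible with this kernel, and deduce maximality and left exactness. The diagram chase itself is the entire technical content of the paper's proof and you only gesture at it (``in the spirit of Lemma~\ref{crutial.thm}''), but the idea is the right one. Your derivation of left exactness from the right-adjoint property of $(?)^M$ is a clean formal packaging of what the paper dismisses as ``the other assertions are easy,'' and it is correct: any morphism from an $M$-trivial $\N$ into $\M$ is killed by $(1-\rho)u$ by naturality, hence factors through the kernel.

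However, your final paragraph contains a genuine error that, if taken seriously, would undermine your own construction. You advise against viewing $p^*\M^M$ as a subsheaf of $p^*\M$ on the grounds that $p^*$ need not preserve injections. In the setting of this lemma the opposite is true, and crucially so: we are on the big fpqc site, where by Lemma~\ref{inverse-image.thm} the inverse image along any $\kappa$-morphism (and $p_2$, $a$, $p$ are all such, being base changes of $F\to S$ or $M\to S$) is literally the restriction functor and hence exact. This is the stated reason the paper works with $\Fpqc(X)$ rather than the small Zariski site, and the proof of Lemma~\ref{complicated.thm} invokes it in its first sentence. Moreover, your proposed substitute --- endowing $\M^M$ with an $F$-linearization ``via the universal property of the kernel'' --- does not actually avoid exactness: to obtain an isomorphism $a^*\M^M\to p_2^*\M^M$ from the universal property you need $a^*$ and $p_2^*$ to send the kernel $\M^M$ to the kernels of $a^*((1-\rho)u)$ and $p_2^*((1-\rho)u)$, which is precisely left exactness of these pullbacks; without it you only get maps into those kernels, not a linearization of $\M^M$ (this is the same issue that makes $\EM$ fail to be abelian over the small site for non-flat groups, cf.\ Lemma~\ref{five.thm}). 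The repair is simply to invoke Lemma~\ref{inverse-image.thm} (and Lemma~\ref{theta.thm} for the step where $\theta:p_2^*p_*p^*\M\to(\mu\times 1)_*p_{23}^*p^*\M$ must be inverted), exactly as the paper does.
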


\begin{proof}
By Lemma~\ref{inverse-image.thm}, $p_2^*, a^*:\Mod_{\Fpqc}(X)\rightarrow 
\Mod_{\Fpqc}(F\times X)$ are exact functors, and hence 
$a^*\M^M$ is a submodule of $a^*\M$, and $p_2^*\M^M$ is a submodule of
$p_2^\M$ (this discussion without the flatness of $p_2$ is possible
because we are working on the fppf site).

  We first prove that the $F$-linearization $\rho:a^*\M\rightarrow p_2^*\M$ 
  maps $a^*\M^M$ into $p_2^*\M^M$,
  where $a:F\times X\rightarrow X$ and $p_2:F\times X\rightarrow X$ are
  the action and the second projection, respectively.

  It suffices to show that the composite
  \[
  a^*\M^M \rightarrow a^*\M \xrightarrow{\rho} p_2^*\M
  \xrightarrow{u} p_2^*p_*p^*\M \xrightarrow{1-\rho} p_2^*p_*p^*\M
  \]
  is zero, where 
$p:M\times X\rightarrow X$ is the second projection, and
$u$ is the unit of adjunction.
  The diagram
  \[
  \xymatrix{
    p_2^*\M \ar[r]^u \ar[d]^u & p_2^*p_*p^*\M \ar[r]^{1-\rho} \ar[d]^\theta 
    \ar[d]^\theta & p_2^*p_*p^*\M \ar[d]^\theta \\
    (\mu\times1)_*(\mu\times1)^*p_2^*\M \ar[r]^-d &
    (\mu\times 1)_*p_{23}^*p^*\M \ar[r]^{1-\rho} &
    (\mu\times 1)_* p_{23}^*p^*\M
  }
  \]
  is commutative by the naturality of $\theta$ and \cite[(1.26)]{ETI}, 
  where $p:M\times X\rightarrow X$ is the trivial action,
  $\mu:F\times M\rightarrow F$ is the product, $p_{23}:F\times M\times X
  \rightarrow M\times X$ is the projection, $\theta$ is Lipman's theta
  \cite[(1.21)]{ETI} with respect to the cartesian square
  \[
  \xymatrix{
    F\times M\times X \ar[r]^-{p_{23}} \ar[d]^{\mu\times 1} & 
    M\times X \ar[d]^p \\
    F\times X \ar[r]^{p_2} & X
  },
  \]
  and $d$ is the canonical isomorphism 
  \cite[Chpaters 1, 2]{ETI}.
  As $\theta:p_2^*p_*p^*\M\rightarrow (\mu\times1)_*p_{23}^*p^*\M$ is
  an isomorphism by Lemma~\ref{theta.thm} 
(this is also because we treat the fpqc site), 
  through the adjoint isomorphism
  \[
  \Hom_{\O_{F\times X}}(a^*\M^M,(\mu\times 1)_*p_{23}^*p^*\M)
  \cong
  \Hom_{\O_{F\times N\times X}}((\mu\times 1)^*a^*\M^N,
  p_{23}^*p^*\M),
  \]
  it suffices to show that the composite
  \[
  (\mu\times 1)^*a^*\M^M
  \rightarrow
  (\mu\times 1)^*a^*\M
  \xrightarrow{\rho}
  (\mu\times 1)^*p_2^*\M
  \xrightarrow{d}
  p_{23}^*p^*\M
  \xrightarrow{\rho^{-1}-1}
  p_{23}^*p^*\M
  \]
  is zero.

  By the definition of the $F$-linearization, we have a commutative
  diagram of $\O_{M\times G\times X}$-modules
  \begin{equation}\label{tricky.eq}
    \xymatrix{
      (\mu'\times1)^*a^*\M^M
      \ar[r]^d \ar[d]^\rho &
      (1\times a)^*p^*\M^M \ar[r]^{\rho=1} &
      (1\times a)^*p^*\M^M \ar[d]^-d \\
      (\mu'\times 1)^*p_2^*\M \ar[r]^d &
      p_{23}^*p_2^*\M &
      p_{23}^*a^*\M^M \ar[l]_\rho
    }
  \end{equation}
  is commutative, where $\mu':M\times F\rightarrow F$ is the product.
  Let $\Theta: F\times M\times X\rightarrow M\times F\times X$ be
  the isomorphism given by $\Theta(f,m,x)=(fmf^{-1},f,x)$.
  Applying $\Theta^*$ to (\ref{tricky.eq}), we get the commutative diagram
  \[
  \xymatrix{
    (\mu\times 1)^*a^*\M^M \ar[r]^\rho \ar[d]^d &
    (\mu\times 1)^*p_2^*\M \ar[d]^d \\
    \Theta^*(\mu'\times 1)^*a^*\M^M \ar[r]^\rho \ar[d]^d &
    \Theta^*(\mu'\times 1)^*p_2^*\M \ar[d]^d \\
    \Theta^*(1\times a)^*p^*\M^M \ar[d]^{\rho=1} &
    \Theta^*p_{23}^*p_2^*\M \\
    \Theta^*(1\times a)^*p^*\M^M \ar[r]^d &
    \Theta^*p_{23}^*a^* \M \ar[u]_{\rho}
  }
  \]
  because $(\mu'\times 1)\Theta=\mu\times 1$.
  So the diagram
  \begin{equation}\label{tricky2.eq}
    \xymatrix{
      (\mu\times 1)^*a^*\M^M \ar[r]^\rho \ar[d]^d &
      (\mu\times 1)^*p_2^*\M \ar[d]^d \\
      (1_F\times p)^*a^*\M^M\ar[r]^\rho &
      (1_F\times p)^*p_2^*\M
    }
  \end{equation}
  is commutative (note that $p_{23}\Theta=1_F\times p$).
  On the other hand, by the definition of $F$-linearization, the diagram
  \begin{equation}\label{tricky3.eq}
    \xymatrix{
      (\mu\times 1)^*a^*\M^M \ar[r]^\rho \ar[d]^d &
      (\mu\times 1)^*p_2^*\M \ar[r]^d &
      p_{23}^*p^* \M \ar[d]^{\rho^{-1}} \\
      (1\times p)^*a^*\M^M \ar[r]^\rho &
      (1\times p)^*p_2^*\M \ar[r]^d &
      p_{23}^*p^*\M
    }
  \end{equation}
  is commutative.
  Combining the commutativity of (\ref{tricky2.eq}) and (\ref{tricky3.eq}),
  we have that the composite
  \[
  (\mu\times 1)^*a^*\M^M
  \xrightarrow{\rho}
  (\mu\times 1)^*p_2^*\M
  \xrightarrow{d}
  p_{23}^*p^*\M
  \xrightarrow{\rho^{-1}-1}
  p_{23}^*p^*\M
  \]
  is zero, as desired.

  Next, we want to prove that $\rho^{-1}:p_2^*\M\rightarrow a^*\M$ maps
  $p_2^*\M^M$ into $a^*\M^M$.
  This is proved similarly, using the commutativity of
  \[
  \xymatrix{
    (\mu\times 1)^*a^*\M \ar[d]^d & 
    (\mu\times 1)^*p_2^*\M^M \ar[d]^d \ar[l]_{\rho^{-1}} \\
    (1_F \times p)^*a^*\M &
    (1_F\times p)^*p_2^*\M^M \ar[l]_{\rho^{-1}}
  }
  \]
  and the commutativity of 
  \[
  \xymatrix{
    \Theta^*(\mu'\times 1)^*a^*\M &
    \Theta^*(\mu'\times 1)^*p_2^*\M \ar[l]_{\rho^{-1}} \ar[d]^d \\
    \Theta^*(1\times a)^*p^*\M \ar[u]^d &
    \Theta^*p_{23}^*p_2^*\M^N \ar[d]^{\rho^{-1}} \\
    \Theta^*(1\times a)^*p^*\M \ar[u]^{\rho^{-1}} &
    \Theta^*p_{23}^*a^*\M \ar[l]_d
  }.
  \]

  Thus $\rho:a^*\M^M\rightarrow p_2^*\M^M$ is an isomorphism, and
  thus $\M^M$ is an $F$-linearized $\O_X$-submodule of $\M$.
  It is $M$-trivial almost by the definition of $\M^M$.
  The other assertions are easy.
\end{proof}

\begin{lemma}\label{j-compatibility.thm}
Let $I$ be a small category, and $f_\bullet:X_\bullet\rightarrow Y_\bullet$ 
be a morphism of $I\op$-diagrams of $S$-schemes.
Let $j_{X_\bullet}=\mu\nu
:\Fpqc(X_\bullet)\rightarrow\Zar(X_\bullet)$ be the canonical
morphism of ringed sites.
Define $j_{Y_\bullet}$ in a similar way.
Then
\begin{enumerate}
\item[\bf 1] Lipman's theta 
$\theta:j^*_{X_\bullet}(?)_J\rightarrow (?)_Jj^*_{X_\bullet}$ is an isomorphism
for any subcategory $J$ of $I$.
\item[\bf 2] 
The conjugate $\xi: (j_{X_\bullet})_*R_J\rightarrow R_J(j_{X_\bullet|_J
})_*$ is also an isomorphism.
\item[\bf 3]
The Lipman's theta $\theta:f_\bullet^*(j_{Y_\bullet})_*\M
\rightarrow (j_{X_\bullet})_*f_\bullet^*\M$ 
is an isomorphism for locally quasi-coherent $\M$.
\end{enumerate}
\end{lemma}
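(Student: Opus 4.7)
The plan is to prove \textbf{1} by a direct objectwise inspection, deduce \textbf{2} as a formal mate of \textbf{1}, and then handle \textbf{3} by a reduction via \textbf{1} and \textbf{2} to a single-scheme affine calculation.

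For part \textbf{1}, both $j^*_{X_\bullet|_J}(?)_J$ and $(?)_J j^*_{X_\bullet}$ are functors $\Mod(\Zar(X_\bullet)) \to \Mod(\Fpqc(X_\bullet|_J))$. I would realize $(?)_J$ via the inclusion functors of sites fitting into the commutative square
\[
\xymatrix{
\Fpqc(X_\bullet|_J) \ar[d]_{j_{X_\bullet|_J}} \ar[r] & \Fpqc(X_\bullet) \ar[d]^{j_{X_\bullet}} \\
\Zar(X_\bullet|_J) \ar[r] & \Zar(X_\bullet)
}
\]
so that $\theta$ becomes the standard Lipman comparison map for this square. Evaluated on an object $(i, f\colon U \to X_i) \in \Fpqc(X_\bullet|_J)$, both composites yield $\Gamma(U, f^*\M_i)$ by the description in (\ref{Zar-description.par}). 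This gives the isomorphism at the presheaf level, which extends to the sheafification by the uniqueness of sheafification (Lemma \ref{sheaf-extension.thm}).

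For part \textbf{2}, I would invoke the mate correspondence. Since $(?)_J$ has a right adjoint $R_J$ by Lemma \ref{restriction.thm}, and since $(j^*,j_*)$ is an adjoint pair for both $X_\bullet$ and $X_\bullet|_J$, the natural transformation $\xi$ is precisely the mate of $\theta^{-1}$ under these adjunctions. Because the mate of a natural isomorphism is again a natural isomorphism, $\xi$ is an isomorphism.

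For part \textbf{3}, I would first reduce to the case $I = 1$. By \textbf{1} applied to $J = \{i\}$ for each $i \in I$, $(?)_{\{i\}}$ commutes with $j^*$; by \textbf{2} it also commutes with $j_*$. Hence the two sides of $\theta$ restrict at index $i$ to $f_i^*(j_{Y_i})_*\M_i$ and $(j_{X_i})_* f_i^*\M_i$, and it suffices to prove the statement for a single morphism $f\colon X \to Y$ and $\M \in \Qch_{\Fpqc}(Y)$. In that case, $(j_Y)_*\M$ is quasi-coherent on $\Zar(Y)$ (on an affine $V = \Spec A$ with $\M|_V \cong \tilde M$, its Zariski restriction is again $\tilde M$), so $f^*(j_Y)_*\M$ is the ordinary scheme-theoretic inverse image, which is quasi-coherent on $X$. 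Similarly, $f^*\M$ is quasi-coherent on $\Fpqc(X)$ and so is $(j_X)_* f^*\M$ on $\Zar(X)$. By Lemma \ref{qch.thm}, both sides of $\theta$ are determined by their sections over an affine open $U = \Spec B \subset X$ mapping into an affine $V = \Spec A \subset Y$, and each evaluates to $B \otimes_A \Gamma(V,\M)$ by the standard formula for inverse image of quasi-coherent sheaves.

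The main obstacle will lie in \textbf{3}: since $f$ is not required to lie in $\Bbb C$, Lemma \ref{theta.thm} does not apply directly, so site-level flat base change is unavailable. The workaround is that local quasi-coherence of $\M$ suffices to pin down both sides of $\theta$ by their values on affine opens, where the comparison reduces to the module-theoretic identity on $B \otimes_A \Gamma(V,\M)$.
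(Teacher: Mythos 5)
Your proposal matches the paper's proof in all essentials: part \textbf{1} by a componentwise evaluation (the paper defers this to \cite[(6.25)]{ETI}), part \textbf{2} as the formal conjugate/mate of part \textbf{1}, and part \textbf{3} by reducing to a single morphism of affine schemes, where both sides of $\theta$ become $(B\otimes_A M)\,\tilde{}\,$ and the map is the identity. The only cosmetic difference is that the paper localizes by first checking the open-immersion case before assuming $X$ and $Y$ affine, whereas you localize by noting that both sides of $\theta$ are quasi-coherent and hence determined on affine opens; both routes end at the same module computation.
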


\begin{proof}
{\bf 1} is proved similarly to \cite[(6.25)]{ETI}.
{\bf 2} follows from {\bf 1}.
To prove {\bf 3}, 
we may assume that $f:X\rightarrow Y$ is a map of single schemes.
It is easy to check the assertion for the case that $f$ is an open immersion.
So we may assume that both $X=\Spec B$ and $Y=\Spec A$ are affine, and 
$\M=\tilde M$.
Then it is easy to see that $\theta$ is the identity map $(B\otimes_A M)\,
\tilde{} \to (B\otimes_A M)\,\tilde{}$.
\end{proof}

\begin{theorem}\label{principal-main.thm}
  Let $\varphi:X\rightarrow Y$ be a $G$-enriched principal 
  $N$-bundle.
  Then $(?)^N\varphi_*:\EM_{\Fpqc}(G,X)\rightarrow \EM_{\Fpqc}(H,Y)$ 
  is an equivalence,
  and $\varphi^*:\EM_{\Fpqc}(H,Y)\rightarrow\EM_{\Fpqc}(G,X)$ 
  is its quasi-inverse.
  Under the equivalence, $\Qch_{\Fpqc}(H,Y)$ corresponds to 
  $\Qch_{\Fpqc}(G,X)$.
  The unit of adjunction $u:\Id\rightarrow (?)^N\varphi_*\varphi^*$ and
  the counit of adjunction $\varphi^*(?)^N\varphi_*\rightarrow\Id$ are
  given as in Proposition~\ref{basic.thm}.
\end{theorem}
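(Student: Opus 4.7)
The plan is to combine the descent result for principal $N$-bundles (Lemma~\ref{basic2.thm}) with the identification between $H$-equivariant and $N$-trivially $G$-equivariant modules (Lemma~\ref{NGH.thm}). A preliminary observation is that $N$ must act trivially on $Y$: since $\varphi$ is both a $G$-morphism and $N$-invariant, and since $\varphi$ is qfpqc by Lemma~\ref{principal-mumford.thm}, Corollary~\ref{trivial.thm} forces the $N$-action on $Y$ to be trivial. Thus $Y$ inherits a canonical $H$-scheme structure, and Lemma~\ref{NGH.thm} identifies $\EM_{\Fpqc}(H,Y)$ with the full subcategory $\EM_{N,\Fpqc}(G,Y)$ of $N$-trivial $(G,\O_Y)$-modules. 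The task reduces to producing mutually quasi-inverse equivalences $\varphi^*$ and $(?)^N\varphi_*$ between $\EM_{N,\Fpqc}(G,Y)$ and $\EM_{\Fpqc}(G,X)$.

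Next, I would invoke Lemma~\ref{basic2.thm} applied to the principal $N$-bundle $\varphi$: it already furnishes an equivalence $\varphi^*:\Mod_{\Fpqc}(Y)\to \EM_{\Fpqc}(N,X)$ with quasi-inverse $(?)^N\varphi_*$, together with the prescribed unit and counit, and it preserves quasi-coherence. The remaining work is to check that these two functors restrict to the $G$-equivariant subcategories. For $\varphi^*$: being a $G$-morphism, $\varphi$ pulls back equivariant modules to equivariant modules, so $\varphi^*$ carries $\EM_{N,\Fpqc}(G,Y)$ into $\EM_{\Fpqc}(G,X)$. For $(?)^N\varphi_*$: the direct image $\varphi_*$ along a $G$-morphism preserves $G$-equivariance on the fpqc site -- this is precisely where Lipman's theta being an isomorphism (Lemma~\ref{theta.thm}) is essential and is what forces one off the small Zariski site. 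Then Lemma~\ref{complicated.thm}, applied with $F=G$ and $M=N$, ensures that $(\varphi_*\M)^N$ is an $N$-trivial $(G,\O_Y)$-submodule, i.e.\ lies in $\EM_{N,\Fpqc}(G,Y)$.

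Once both restricted functors are shown to land in the correct subcategories, the unit and counit supplied by Lemma~\ref{basic2.thm} are already natural transformations between the ambient categories and are isomorphisms there, so they remain isomorphisms after restriction, yielding the desired equivalence; the explicit descriptions of $u$ and $\varepsilon$ in the statement then just read off from those of Lemma~\ref{basic2.thm} (cf.\ Proposition~\ref{basic.thm}). The preservation of quasi-coherence is immediate from the corresponding clause in Lemma~\ref{basic2.thm}, since quasi-coherence is intrinsic to the underlying $\O$-module and independent of the equivariant structure.

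The hard part is the verification that $(?)^N\varphi_*$ lands in the $G$-equivariant world, i.e.\ that taking $N$-invariants preserves $G$-linearizations when $N$ is normal in $G$; this is the content of Lemma~\ref{complicated.thm}, whose proof exploits the twist isomorphism $\Theta(g,n,x)=(gng^{-1},g,x)$ coming from normality, together with the fact that on the fpqc site the functors $a^*$ and $p_2^*$ are exact (Lemma~\ref{inverse-image.thm}) and Lipman's theta is an isomorphism. Since that lemma is already established, the remaining argument for Theorem~\ref{principal-main.thm} is essentially bookkeeping: tracing through which naturality squares restrict to the equivariant subcategories and invoking Lemmas~\ref{basic2.thm} and \ref{NGH.thm} to identify the resulting equivalence.
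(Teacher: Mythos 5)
Your proposal is correct and follows essentially the same route as the paper: reduce via Lemma~\ref{NGH.thm} to the $N$-trivial $G$-equivariant category, apply Lemma~\ref{basic2.thm} to the underlying principal $N$-bundle, use the fpqc-site isomorphism of Lipman's theta to make $\varphi_*$ land in $\EM_{\Fpqc}(G,Y)$, and use Lemma~\ref{complicated.thm} for $(?)^N$. The only point stated more carefully in the paper is that the unit and counit of Lemma~\ref{basic2.thm} must be checked to be $(G,\O_X)$-linear (not merely isomorphisms of underlying modules, since the equivariant categories are not full subcategories of the ambient ones), which your closing remark about tracing naturality squares implicitly covers.
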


\begin{proof}
  $\varphi_*$ is a functor from $\EM(G,X)$ to $\EM(G,Y)$
(again, we use the commutativity of Lipman's theta.
This is known to be true for quasi-coherent sheaves over small Zariski 
site only for the case that $G$ is flat and $f$ is quasi-compact 
quasi-separated, see \cite[(7.12), (7.14)]{ETI}).
  On the other hand, $(?)^N$ is a functor from $\EM(G,Y)$ to
  $\EM_N(G,Y)\cong \EM(H,Y)$ by Lemma~\ref{complicated.thm} and
  Lemma~\ref{NGH.thm}.
  So $(?)^N\varphi_*$ is a functor from $\EM(G,X)$ to $\EM(H,Y)$.
  Conversely,
  \[
  \EM(H,Y)\xrightarrow{\res}\EM_N(G,Y)\xrightarrow{\varphi^*}\EM(G,X)
  \]
  is a functor.

  These two functors are quasi-inverse each other,
  since $u:\Id\rightarrow (?)^N\varphi_*\varphi^*$ and 
  $\varepsilon:\varphi^*(?)^N\varphi_*\rightarrow\Id$ given in 
  Lemma~\ref{basic2.thm} (the group $N$ here is $G$ there) are
  $(G,\O_X)$-linear.
  
  The last assertion follows from the last assertion of Lemma~\ref{basic2.thm}.
\end{proof}

\begin{corollary}\label{descent-main-cor.thm}
Under the same assumption as in the theorem,
$(?)^N\varphi_*:\Qch(G,X)\rightarrow \Qch(H,Y)$ and
$\varphi^*: \Qch(H,Y)\rightarrow\Qch(G,X)$ are quasi-inverse each other.
The unit map 
and the counit map are given as in Proposition~\ref{basic.thm}.
\end{corollary}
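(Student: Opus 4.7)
The plan is to reduce directly to Theorem~\ref{principal-main.thm} by transport along the canonical identifications of quasi-coherent sheaves on the three sites (small Zariski, big Zariski, fpqc) recorded in paragraph~(\ref{ten.par}). Recall from there that $\Qch(G,X)\simeq\Qch_{\Fpqc}(G,X)$ and $\Qch(H,Y)\simeq\Qch_{\Fpqc}(H,Y)$, implemented by the inverse image along the morphism of ringed sites $j=\mu\nu:\Fpqc(\,\cdot\,)\to\Zar(\,\cdot\,)$, with quasi-inverse $j_*$. Combining these equivalences with Theorem~\ref{principal-main.thm} automatically produces an equivalence between $\Qch(H,Y)$ and $\Qch(G,X)$; the substantive task is to verify that the transported functors coincide with the Zariski-site $\varphi^*$ and $(?)^N\varphi_*$ appearing in the statement.

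For the pullback, I would argue as follows. Given $\N\in\Qch(H,Y)$, transport produces $(j_{X_\bullet})_*\,\varphi^*\,j_{Y_\bullet}^*\N$, where the inner $\varphi^*$ is computed in the fpqc site. By Lemma~\ref{j-compatibility.thm}~{\bf 3}, Lipman's theta furnishes a natural isomorphism $\varphi^*(j_{Y_\bullet})_*\M\cong(j_{X_\bullet})_*\varphi^*\M$ for locally quasi-coherent $\M$; applying this with $\M=j_{Y_\bullet}^*\N$ and using $(j_{Y_\bullet})_*j_{Y_\bullet}^*\N\cong\N$ identifies the transported fpqc $\varphi^*$ with the Zariski-site $\varphi^*$. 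In particular, the latter is an equivalence.

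The quasi-inverse is then forced to be the transport of $(?)^N\varphi_*^{\Fpqc}$ by uniqueness of right adjoints. To see that this coincides on the nose with $(?)^N\varphi_*$ computed in the Zariski setting, one moves $\varphi_*$ across the change of ringed site using Lemma~\ref{j-compatibility.thm}~{\bf 1}--{\bf 2}, and notes that the invariance functor $(?)^N$, being defined by restriction along a map of diagrams (see the paragraph preceding Proposition~\ref{basic.thm}), is likewise compatible with $j$ by the same lemma. The unit and counit descriptions of Proposition~\ref{basic.thm} then transfer verbatim from the fpqc side, since $j^*$ and $j_*$ preserve the relevant adjunction data.

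The only real obstacle is the bookkeeping of all Lipman thetas through the simplicial schemes $B_G^M(X)$ and $B_H^M(Y)$ and across the several site changes; each individual theta is an isomorphism on the (locally) quasi-coherent sheaves involved, guaranteed by the formalism of Section~\ref{gro.sec} and by our working on the fpqc site (so that flatness is not needed), and so the verification is routine but notationally heavy.
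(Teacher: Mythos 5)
Your proposal is correct and follows essentially the same route as the paper: both transport the fpqc-site equivalence of Theorem~\ref{principal-main.thm} to the small Zariski site along $j=\mu\nu$ and use Lemma~\ref{j-compatibility.thm} to identify the transported functors with the Zariski-site $\varphi^*$ and $(?)^N\varphi_*$. The only (immaterial) difference is that you identify the transported pullback $(j_{X_\bullet})_*\varphi^*j_{Y_\bullet}^*$ with $\varphi^*$ via theta, whereas the paper identifies the transported composite $j_*(?)^N\varphi_*j^*$ with $(?)^N\varphi_*j_*j^*$ and then cancels the autoequivalence $j_*j^*$.
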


\begin{proof}
It is clear that $\varphi^*: \Qch(H,Y)\rightarrow\Qch(G,X)$ is a functor.
Consider the composite of equivalences
\[
\Qch(G,X)\xrightarrow{j^*}\Qch_{\Fpqc}(G,X)\xrightarrow{(?)^N\varphi_*}
\Qch_{\Fpqc}(H,Y)\xrightarrow{j_*}\Qch(H,Y),
\]
where $j=\mu\nu$.
As in \cite[(30.3)]{ETI}, $(?)^N:\Mod(G,Y)\rightarrow\Mod(Y)$ is
$(?)_{[-1]}\circ R_{\Delta}$.
So by Lemma~\ref{j-compatibility.thm}, the composite is identified with
$(?)^N\varphi_*j_*j^*:\Qch(G,X)\cong \Qch(H,Y)$.
As $j_*j^*:\Qch(G,X)\rightarrow \Qch(G,X)$ is an autoequivalence,
$(?)^N\varphi_*:\Qch(G,X)\rightarrow \Qch(H,Y)$ is an equivalence.
The rest is easy.
\end{proof}

\begin{remark}
The statement of Corollary~\ref{descent-main-cor.thm} is 
independent of the choice of $\kappa$, as it is an assertion for
the Zariski topology.
\end{remark}

\section{Equivariant Picard groups and class groups}

\paragraph Let $f:G\rightarrow H$ be a qfpqc homomorphism of $S$-group
schemes, and $N=\Ker f$, as above.

\begin{corollary}\label{princ-pic-equiv.thm}
Let $\varphi:X\rightarrow Y$ be a $G$-enriched principal $N$-bundle.
Then $\varphi^*:\Inv(H,Y)\rightarrow\Inv(G,X)$ is an equivalence, and
$(?)^N\circ \varphi_*:\Inv(G,X)\rightarrow \Inv(H,Y)$ is its quasi-inverse.
Thus we have an isomorphism
\[
\varphi^{*}: [\M]\mapsto [\varphi^*\M]
\]
from $\Pic(H,Y)$ to $\Pic(G,X)$.
Its inverse is given by $[\N]\mapsto[(\varphi_*\N)^N]$.
\end{corollary}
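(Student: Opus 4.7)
The plan is to reduce this statement to Corollary~\ref{descent-main-cor.thm} by verifying that the adjoint equivalence $\varphi^*\dashv (?)^N\varphi_*$ between $\Qch(H,Y)$ and $\Qch(G,X)$ restricts to the respective full subcategories of invertible sheaves. Once this restriction is established, the unit and counit inherited from Proposition~\ref{basic.thm} automatically furnish the equivalence on $\Inv$, and the Picard-group isomorphism follows formally because inverse image preserves the tensor product and the trivial line bundle, so descends to a group homomorphism on isomorphism classes with inverse given by $[\N]\mapsto [(\varphi_*\N)^N]$.

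That $\varphi^*$ sends $\Inv(H,Y)$ into $\Inv(G,X)$ is immediate from the general observation recorded in (\ref{inverse-image.par}) that the inverse image functor attached to a morphism of ringed sites carries invertible sheaves to invertible sheaves (the definition of invertibility is local and preserved under pullback of the defining short exact presentation).

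The main step is the reverse inclusion: I would show that $(?)^N\circ\varphi_*$ sends $\Inv(G,X)$ into $\Inv(H,Y)$. Given $\N\in\Inv(G,X)$, set $\M:=(\varphi_*\N)^N\in\Qch(H,Y)$. By Corollary~\ref{descent-main-cor.thm}, the counit supplies an isomorphism $\varphi^*\M\cong \N$ in $\Qch(G,X)$, which is in particular an isomorphism of the underlying $\O_X$-modules. So $\varphi^*\M$ is invertible as an $\O_X$-module. Since $\varphi$, being a $G$-enriched principal $N$-bundle, is qfpqc by Lemma~\ref{principal-mumford.thm}, Lemma~\ref{inv.thm} applies and tells us that invertibility descends along $\varphi$; hence $\M$ is invertible as an $\O_Y$-module, that is, $\M\in\Inv(H,Y)$.

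The main obstacle is precisely this descent of invertibility: one must recognize that the isomorphism produced by the equivalence of Corollary~\ref{descent-main-cor.thm} is an isomorphism of plain $\O_X$-modules (not merely of equivariant modules), so that Lemma~\ref{inv.thm} can be invoked against the qfpqc morphism $\varphi$. With both directions of preservation in hand, restriction of the quasi-coherent equivalence yields the equivalence $\varphi^*:\Inv(H,Y)\to\Inv(G,X)$ with quasi-inverse $(?)^N\circ\varphi_*$, and passing to isomorphism classes gives the stated isomorphism $\Pic(H,Y)\to\Pic(G,X)$ together with the claimed formula for its inverse.
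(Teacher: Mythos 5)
Your proposal is correct and is exactly the paper's argument: the paper's proof reads ``This is immediate from Corollary~\ref{descent-main-cor.thm} and Lemma~\ref{inv.thm},'' and your write-up simply unpacks that one-liner, using the quasi-coherent equivalence to get $\varphi^*\M\cong\N$ and then Lemma~\ref{inv.thm} to descend invertibility along the qfpqc morphism $\varphi$. No discrepancies to report.
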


\begin{proof}
This is immediate from
Corollary~\ref{descent-main-cor.thm} and Lemma~\ref{inv.thm}.
\end{proof}

\paragraph
Now assume that $G$ and $N$ are flat over $S$.
Being a principal $N$-bundle, $f$ is also flat, and hence is fpqc.
So $H$ is also flat over $S$.

\begin{corollary}\label{princ-ref-equiv.thm}
Let $\varphi:X\rightarrow Y$ be a $G$-enriched principal $N$-bundle
such that $X$ is locally Krull.
Then $\varphi^*:\Ref(H,Y)\rightarrow\Ref(G,X)$ is an equivalence, and
$(?)^N\circ \varphi_*:\Ref(G,X)\rightarrow \Ref(H,Y)$ is its quasi-inverse.
With this equivalence, $\Ref_n(H,Y)$ corresponds to $\Ref_n(G,X)$.
Thus $\Ref_1(H,Y)$ and $\Ref_1(G,X)$ are equivalent, and we have an 
isomorphism
\[
\varphi^{*}: [\M]\mapsto [\varphi^*\M]
\]
from $\Cl(H,Y)$ to $\Cl(G,X)$.
Its inverse is given by $[\N]\mapsto[(\varphi_*\N)^N]$.
\end{corollary}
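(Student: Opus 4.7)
The strategy is to reduce everything to Corollary~\ref{descent-main-cor.thm}, which already supplies the equivalence of categories $\varphi^*:\Qch(H,Y)\to\Qch(G,X)$ with quasi-inverse $(?)^N\circ\varphi_*$; only the stability of reflexivity and rank under this equivalence needs to be checked. First I would record the flatness consequences: since $G$ and $N$ are flat over $S$, Lemma~\ref{principal-descent.thm} makes $\varphi:X\to Y$ fpqc, hence faithfully flat and qfpqc. By faithfully flat descent of the locally Krull property (as in \cite{Hashimoto4}), $Y$ is locally Krull too, so $\Ref(H,Y)$, $\Ref_n(H,Y)$ and $\Cl(H,Y)$ are defined.

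The main step is to show that the equivalence from Corollary~\ref{descent-main-cor.thm} restricts to an equivalence $\Ref_n(H,Y)\to\Ref_n(G,X)$. In one direction, if $\M\in\Ref_n(H,Y)$, flatness of $\varphi$ yields a natural isomorphism $\varphi^*\uHom_{\O_Y}(\M,\O_Y)\cong\uHom_{\O_X}(\varphi^*\M,\O_X)$, and iterating gives $(\varphi^*\M)^{\vee\vee}\cong\varphi^*(\M^{\vee\vee})\cong\varphi^*\M$, so $\varphi^*\M$ is reflexive; preservation of rank is immediate from faithful flatness. In the other direction, if $\varphi^*\M$ is reflexive of rank $n$, the evaluation map $\M\to\M^{\vee\vee}$ becomes an isomorphism after applying the faithfully flat $\varphi^*$ (using the same formula for $\varphi^*$ and $\uHom$), hence was already an isomorphism; the same argument descends the rank. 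Having shown both containments, the two quasi-inverse functors automatically restrict to equivalences $\Ref_n(H,Y)\leftrightarrow\Ref_n(G,X)$.

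Finally, passing to isomorphism classes in $\Ref_1$ (which form an abelian group under the operation $[\M][\N]=[(\M\otimes\N)^{\vee\vee}]$, and whose inverse is given by $(?)^{\vee}$), the restricted equivalence yields a group isomorphism $\Cl(H,Y)\to\Cl(G,X)$, with $\varphi^*$ and $(?)^N\circ\varphi_*$ inverse to each other as required.

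The main technical obstacle is not the abstract descent of reflexivity but the compatibility of the equivariant structures with the duality operations: one must verify that the natural isomorphism $\varphi^*\uHom(\M,\O_Y)\cong\uHom(\varphi^*\M,\O_X)$ is an isomorphism of $(G,\O_X)$-modules, so that the evaluation maps $\M\to\M^{\vee\vee}$ and $\varphi^*\M\to(\varphi^*\M)^{\vee\vee}$ correspond under the equivalence of Theorem~\ref{principal-main.thm}. This compatibility ultimately reduces to the naturality of Lipman's theta (Lemma~\ref{theta.thm}) and the explicit form of the unit and counit maps supplied by Proposition~\ref{basic.thm}, but the bookkeeping across the several sites involved deserves some care.
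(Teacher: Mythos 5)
Your proposal is correct in outline but takes a genuinely different, and considerably more laborious, route than the paper. The paper's entire proof is two sentences: it observes that $\varphi$ is fpqc by Lemma~\ref{principal-descent.thm} (since $N\rightarrow S$ is flat and qfpqc, hence fpqc), and then combines Corollary~\ref{descent-main-cor.thm} with the result \cite[(5.32)]{Hashimoto4} from the prequel, which already establishes, for an ordinary principal bundle, that $Y$ is locally Krull and that the descent equivalence matches up $\Ref_n$ and induces $\Cl(Y)\cong\Cl(G,X)$. In other words, all of the reflexivity, rank, and class-group bookkeeping that you carry out by hand is outsourced to the earlier paper; the only new input here is the enriched equivalence of quasi-coherent categories. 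Your approach buys self-containedness, but at a price: the step ``flatness of $\varphi$ yields $\varphi^*\uHom_{\O_Y}(\M,\O_Y)\cong\uHom_{\O_X}(\varphi^*\M,\O_X)$'' is not a consequence of flatness alone for arbitrary quasi-coherent $\M$ on a (possibly non-Noetherian) locally Krull scheme; it relies on the finiteness/lattice conditions built into the definition of $\Ref$ in \cite{Hashimoto4}, and in the descent direction one must first know that $\M$ inherits those conditions from $\varphi^*\M$ before the double-dual argument applies. These are exactly the points that \cite[(5.32)]{Hashimoto4} settles, so if you intend your proof to stand on its own you should either cite that result (as the paper does, collapsing your argument to a one-liner) or supply the base-change-of-$\uHom$ and descent-of-finiteness lemmas explicitly. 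Your closing remark about compatibility of the equivariant structures with duality is a real concern, but it, too, is absorbed by the citation in the paper's proof.
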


\begin{proof}
Note that $\varphi$ is fpqc by Lemma~\ref{principal-descent.thm}, 
as $N\rightarrow S$ is fpqc.
Now the assertion follows from
Corollary~\ref{descent-main-cor.thm}
and \cite[(5.32)]{Hashimoto4}.
\end{proof}

\paragraph
Temporarily forget our settings on $G$, $H$, and $N$.

\begin{example}\label{equivariant-bundle.ex}
Let $N$ be an $S$-group scheme, and $H$ another $S$-group scheme
acting on $N$ as group automorphisms.
We say that $X$ is an $H$-equivariant $N$-scheme when $X$ is an $H$-scheme
$N$-scheme such that the action $a_X:N\times X\rightarrow X$ is an
$H$-morphism.
When we set $G:=N\rtimes H$, the semidirect product, then an 
$H$-equivariant $N$-scheme and a $G$-scheme is the same thing.
We define:
An {\em $H$-equivariant $N$-morphism} is a $G$-morphism.
An {\em $H$-equivariant $N$-invariant morphism} is a $G$-morphism
which is $N$-invariant.
An {\em $H$-equivariant principal $N$-bundle} is 
a $G$-enriched principal $N$-bundle.
\end{example}

Thus our results also apply to equivariant principal bundles.

\begin{example}
Let $k$ be a field, and $N_0$ a finite \'etale $k$-group scheme, and
$\varphi:X\rightarrow Y$ a principal $N_0$-bundle.
Let $k'$ be a finite Galois extension of $k$ such that $k'\otimes N_0$
is a constant finite group $N$.
That is, $N$ is a finite group and 
$k'\otimes k[N_0]\cong k'\otimes k[N]$ as $k'$-Hopf algebras.
We understand that $N$ also denotes the constant group scheme over $k$.
So $k'\otimes N_0\cong k'\otimes N$.
Note that the finite group $N$ is identified with the group of $k'$-valued
points of $N_0$, $N_0(k')=(\Sch/k)(\Spec k',N_0)$.

Let $H$ be the Galois group of $k'/k$.
$H$ acts on $N_0$ trivially, and it also acts on $k'\otimes k[N_0]
\cong k'\otimes k[N]$.
As an algebra automorphism preserves idempotents and $k[N]$ is the
$k$-subalgebra generated by the idempotents of $k'\otimes k[N]$, 
$H$ acts on $k[N]$, and so $H$ acts on $N$.
Thus the composite
\[
N\times \Spec k'\times X \cong N_0\times \Spec k'\times X \xrightarrow{\alpha}
\Spec k'\times X
\]
is an action of $N$ on $X'=\Spec k'\times X$, 
and the action is $H$-equivariant,
where $\alpha(n_0,w,x)=(w,n_0x)$.
Now it is easy to see that the base change
$\varphi':X'\rightarrow Y'$ of $\varphi$ by $\Spec k'\rightarrow \Spec k$
is an $H$-equivariant principal $N$-bundle, as a $k$-morphism.
So it is also a $G$-enriched principal $N$-bundle, where $G=N\rtimes H$.
Note that the diagram of equivalences is commutative up to natural
isomorphisms
\[
\xymatrix{
\Qch(G,X') & \Qch(N,X) \ar[l] \\
\Qch(H,Y') \ar[u]^{(\varphi')^*} & \Qch(Y) \ar[u]^{\varphi^*} \ar[l]
}.
\]
Thus $(\varphi')^*$ does almost the same thing as $\varphi^*$,
but $G$ and $H$ are constant groups, and no group scheme appears, while
$\varphi$ is a principal $N_0$-bundle, and $N_0$ need not be 
constant.
\end{example}

\end{document}